\newtheorem{theorem}{Theorem}[section]
\newtheorem{corollary}[theorem]{Corollary}
\newtheorem{lemma}[theorem]{Lemma}
\theoremstyle{plain}
  \newtheorem{thm}{Theorem}[section]
  \newtheorem{prop}[theorem]{Proposition}
\theoremstyle{definition}
  \newtheorem{defn}[thm]{Definition}
  \newtheorem{rmk}[thm]{Remark}
  \newtheorem{ex}[thm]{Example}
\newtheorem{definition}[theorem]{Definition}
\DeclareMathOperator{\im}{im}
\DeclareMathOperator{\coker}{coker}
\DeclareMathOperator{\rk}{rank}
\DeclareMathOperator{\tC}{Cone}
\DeclareMathOperator{\supp}{supp}
\def\om{\omega}
\def\bko{b_k^\omega}
\def\mP{{\cal{P}}}
\def\com{c(\omega)}
\def\del{\partial}
\def\w{\wedge}
\def\Om{\Omega}
\def\CM{\mathcal{M}}
\def\tf{\tfrac{1}{2}}
\def\tsigma{\tilde{\sigma}}
\def\txw{r}
\def\tx{\tilde{x}}
\def\tom{\tilde{\om_t}}
\def\td{{\tilde{d}}}
\numberwithin{equation}{section}
\begin{document}

\title{
\bf\
{Symplectic Morse Theory and Witten Deformation}
}

\author{David Clausen, Xiang Tang and Li-Sheng Tseng\\
\\
}

\date{August 14, 2025}

\maketitle

\begin{abstract}
On symplectic manifolds, we introduce a Morse-type complex with elements generated by pairs of critical points of a Morse function.  The differential of the complex consists of gradient flows and an integration of the symplectic structure over spaces of gradient flow lines.  Using the Witten deformation method, we prove that the cohomology of this complex is independent of both the Riemannian metric and the Morse function used to define the complex and is in fact isomorphic to the cohomology of differential forms of Tsai, Tseng and Yau (TTY).  We also obtain Morse-type inequalities that bound the dimensions of the TTY cohomologies by the number of Morse critical points and the interaction of symplectic structure with the critical points.
    
\end{abstract}

\tableofcontents


\section{Introduction}

The Morse complex, also referred to as the Morse-Witten or Smale-Thom complex, captures the information of the standard homology groups of a closed manifold $M$ by means of a Morse function $f$, i.e. a function whose Hessian at each critical point is non-degenerate, and a Riemannian metric $g$.  The elements of the complex $C^k(M,f)$ are generated by the critical points $q\in Crit(f)$ of the Morse function $f$, and grouped together by their index, $k =ind(q)$, the number of negative eigenvalues of the Hessian matrix of $f$ at $q$.  The differential of the complex requires the use of the metric and is given by the gradient flow, $-\nabla f$, from one critical point to another.  We will assume throughout the paper that the gradient flow satisfy the Morse-Smale transversality condition, that is, the stable and unstable manifolds of any two critical points intersect transversely.  The homology of the Morse complex is well-known to be isomorphic to the standard homology, and therefore, independent of the choice of the Morse function $f$ and the metric $g$.  As a corollary of this isomorphism, the Morse inequalities bound the Betti numbers of $M$ in terms of the number of critical points of the Morse function.

We are interested here to consider Morse theory in the presence of a symplectic structure, that is, on a symplectic manifold $(M^{2n}, \om)$.  On the cohomology side, besides the de Rham cohomology, Tsai, Tseng and Yau (TTY) \cite{TY1, TY2, TTY}  found novel symplectic cohomologies of differential forms.   These cohomologies, which we will call TTY cohomologies and labelled by $F^pH(M, \om)$, with $p=0, 1, \ldots, n-1$, have interesting properties.  For one, they can in general vary with the symplectic structure as seen in explicit examples of a six-dimensional nilmanifold \cite{TY2} and of a three-ball product with a three-torus, $B^3 \times T^3$ \cite{TW}. These cohomologies have also been used to distinguish inequivalent symplectic structures on open 4-manifolds \cite{GTV}.  Of particular relevance here, when the symplectic structure is integral class $[\om]\in H^2(M, \mathbb{Z})$, Tanaka and Tseng pointed out that the TTY cohomologies are isomorphic to the de Rham cohomologies of a higher dimensional sphere bundle over the symplectic manifold \cite{TT}.  
Specifically, denote by $E_p$ the odd-dimensional sphere bundle  $S^{2p+1}\rightarrow  E_p\rightarrow M$ with Euler class $\om^{p+1}$, then $F^pH(M, \om) \cong H_{dR}(E_p)\,$.  Certainly, on this sphere bundle, which is a smooth manifold, we can bound the dimensions of the de Rham cohomology $H_{dR}(E_p)$ and hence, $F^pH(M, \om)$, by Morse or Morse-Bott inequalities.

To simplify the discussion, we will focus mostly in this paper on the $p=0$ TTY cohomology, $PH(M)$, called {\it primitive} cohomology, and introduced in \cite{TY2}.  (The case of $p\geq 1$ can be straightforwardly generalized from the $p=0$ case and will be described explicitly in the concluding section of this paper.)  By \cite{TT}, when $\om$ is an integral class, $PH(M)$ are isomorphic to the de Rham cohomology of the prequantum circle bundle $X$, i.e. a circle bundle with Euler class given by $\om$.  A bound on the dimension $PH(M)\cong H_{dR}(X)$ can be obtained by taking a Morse function $f$ on $M$ and pulling it back to the circle bundle, which makes $\pi^* f$ a Morse-Bott function.  Denote by $\bko = \dim PH^k(M)$.  The Morse-Bott inequalities for a circle bundle states the existence of a polynomial $Q(s)$ with positive coefficients such that
\begin{align*}
(1+s)\sum_{k=0}^{2n} m_k\, s^k = \sum_{k=0}^{2n+1} \bko\, s^k \, +\, (1+s) Q(s) 
\end{align*}
where $m_k$ denotes the number of critical points with index equal to $k$.  Specifically, this gives the strong inequalities
\begin{align}\label{MBrel1}
\sum_{i=0}^k (-1)^{k-i}b^\omega_i \leq \sum_{i=0}^k (-1)^{k-i}(m_i+m_{i-1})=m_k
\end{align}
and the weak inequalities
\begin{align}\label{MBrel2}
\bko \leq m_k + m_{k-1}\,.
\end{align}
Though these Morse-Bott inequalities \eqref{MBrel1}-\eqref{MBrel2} assume $\om$ is an integral class, they in fact hold true for any symplectic structure.  We recall the algebraic relation for the primitive cohomologies in \cite{TTY}. 
\begin{align}\label{omckk}
PH^k(M) \cong \coker\left[\om:H_{dR}^{k-2}(M) \to H_{dR}^{k}(M)\right] \oplus \ker\left[\om: H_{dR}^{k-1}(M) \to H_{dR}^{k+1}(M)\right]\,
\end{align}
which immediately gives the weak inequalities of \eqref{MBrel2} just by bounding the dimensions of $H_{dR}^k(M)$ and $H_{dR}^{k-1}(M)$ by the number of Morse critical points, $m_k$ and $m_{k-1}$, respectively.  The strong inequalities can be similarly attained by applying the rank-nullity theorem.  Hence, we find that the Morse-Bott inequalities only provide a rough estimate for $\bko$.  Moreover, note that the $\bko$'s on the left-hand-side of \eqref{MBrel1}-\eqref{MBrel2} generally depend on $\om$, while the $m_k$'s on the right hand side do not.  
These observations make us ask two questions:  
\begin{itemize}
\item[(1)] Can we find a Morse-type complex that incorporate the symplectic structure $\om$ explicitly and whose cohomology matches that of the TTY cohomology?
\item[(2)]Can we bound $\bko=\dim PH^k(M,\om)$ by inequalities that in general vary with $\om$?
\end{itemize}

\medskip

In this paper, we answer both questions in the affirmative.  For the first, we introduce a symplectic Morse complex on $(M^{2n}, \om)$  defined by a Morse-Smale pair $(f, g)$  on $M$ whose cohomology are isomorphic to the TTY primitive cohomology.  Our symplectic Morse complex is motivated by the result of Tanaka-Tseng \cite{TT} which relates the cochain complex that underlies the TTY cohomologies with the cone complex of the wedge product map $\om^{p+1}: \Om^{\bullet}(M)\to \Om^{\bullet+2p+2}(M)\,$ on the space of differential forms.  Let us recall the definition of a cone on the de Rham complex with respect to $\om^{p+1}$.  Again, for simplicity, we will focus on the case of $p=0$.  

\begin{defn} \label{Cdef}
Let $(M^{2n},\om)$ be a symplectic manifold.  We define the {\bf de Rham cone complex} of $\om$, $\tC(\om)= (\Om^\bullet(M) \oplus \theta\, \Om^{\bullet-1}(M),\, d_C)$:
\begin{equation*}
\begin{tikzcd}
\ldots \arrow[r, "d_C"]  &
\Om^{k}(M)\oplus \theta\,\Om^{k-1}(M) \arrow[r,"d_C"] & \Om^{k+1}(M)\oplus \theta\,\Om^{k}(M) \arrow[r,"d_C"]  & \ldots
\end{tikzcd}
\end{equation*}
where $\theta$ is a formal parameter of degree one and the differential $d_C$ can be expressed in matrix form as 
\begin{align}\label{dCdef}
d_C=\begin{pmatrix} d & \om \\ 0 & - d \end{pmatrix}\,
\end{align}
with $d$ the standard exterior derivative and $\om$ acting by wedge product.  
\end{defn}

\medskip

Note that the $d$-closedness of $\om$ together with the Leibniz product rule ensures that $d_C\, d_C = 0$.  Also, if we formally define $d\theta = \om\,$, then $d_C$ is just the exterior derivative acting on $\Om^\bullet(M) \oplus \theta\, \Om^{\bullet-1}(M)$.  Of interest, Tanaka-Tseng \cite{TT} proved the isomorphism of this cone cohomology with the TTY cohomology: 
\begin{align}\label{CFIso}
H(\tC(\om)) \cong PH(M, \om)\,.
\end{align}

Motivated by the relationship between de Rham complex and the Morse cochain complex over $\mathbb{R}$, we define in the following a {\it cone} Morse complex with respect to $\om$ also over $\mathbb{R}$.  
\begin{defn}\label{Ccdef}
Let $(M^{2n}, \om)$ be a symplectic manifold  equipped with a Riemannian metric $g$ and a Morse function $f$ satisfying the Morse-Smale transversality condition.  Let $C^k(M,f)$ be the $\mathbb{R}$-module with generators the critical points of $f$ with index $k$.  We define the {\bf cone Morse cochain complex} of $\om$, $\tC(c(\om))=(C^\bullet(M,f)\oplus C^{\bullet-1}(M,f),\, \del_C)$:
\begin{equation*}
\begin{tikzcd}
\ldots \arrow[r, "\del_C"]  &
C^{k}(M,f)\oplus C^{k-1}(M,f) \arrow[r,"\del_C"] & C^{k+1}(M,f)\oplus C^{k}(M,f) \arrow[r,"\del_C"]  & \ldots
\end{tikzcd}
\end{equation*}
with 
\begin{align}\label{delCdef}
\del_C= \begin{pmatrix} \del & c(\om) \\ 0 & -\del \end{pmatrix}\,.
\end{align}
Here, $\partial$ is the standard Morse cochain differential defined by gradient flow, and $c(\om): C^k(M,f)\to C^{k+2}(M,f)$ acting on a critical point of index $k$ is defined to be
\begin{align}\label{cpsidef}
c(\om)\,q_{k}=\sum_{ind(r)={k+2}}\left( \int_{\overline{\CM(r_{k+2}, q_k)}}\, \om \right)r_{k+2}
\end{align}
where $\CM(r_{k+2}, q_k)$ is the two-dimensional subspace of $M$ consisting of all flow lines from the index $k+2$ critical point, $r_{k+2}\,$, to $q_k$.
\end{defn}

\medskip

Notice that the elements of the Morse cone complex, $\tC^k(\com)=C^k(M,f)\oplus C^{k-1}(M,f)$, can be generated by {\it pairs} of critical points of index $k$ and $k-1$.  The differential $\del_C$ consists of the standard Morse differential $\partial$ from gradient flow coupled with the $c(\om)$ map which involves an integration of $\om$ over the space of gradient flow lines.  This type of maps has appeared in Austin-Braam \cite{AB} and Viterbo \cite{Viterbo} to define a cup product on Morse cohomology.  It satisfies the following commuting relation:
\begin{align}\label{MLeibniz}
\del\left( c(\om)\right)=c(\om)\del\,.
\end{align}
(This relation follows from a more general Leibniz type formula.  See \cite[Appendix A]{CTT3}.)  With \eqref{MLeibniz} and $\del \del=0\,,$ they together imply $\del_C\,\del_C=0$.  The cone Morse complex hence gives the following cohomology for $k=0, 1, \ldots, 2n+1$,
\begin{align*}
H^k(\tC(\com)) = \dfrac{\ker \del_C \cap \tC^k(\com)}{\im \del_C \cap \tC^k(\com)}\,.
\end{align*}
For this cone Morse cohomology, we are able to prove the following theorem.

\smallskip

\begin{thm}\label{MIso}
Let $(M^{2n},\om)$ be a closed symplectic manifold, The cohomology of symplectic Morse complex is isomorphic to the TTY cohomology, i.e. $H(\tC(c(\om)))\cong PH(M)$.
\end{thm}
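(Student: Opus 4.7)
The plan is to combine the Tanaka--Tseng isomorphism \eqref{CFIso} with a Witten deformation argument applied directly to the de Rham cone complex. Since \eqref{CFIso} already identifies $H(\tC(\om))$ with $PH(M)$, it suffices to establish $H(\tC(\com))\cong H(\tC(\om))$. Given that $\tC(\com)$ is generated by critical point pairs and encodes integrals of $\om$ over gradient moduli spaces, the natural bridge is to deform $d_C$ by the Morse function $f$ and read $\del_C$ off the small-eigenvalue subspace of the resulting Laplacian.

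I would begin by introducing the Witten-deformed cone complex. Letting $f$ act diagonally on $\Om^\bullet(M)\oplus\theta\,\Om^{\bullet-1}(M)$, set $d_{C,t}=e^{-tf}\,d_C\,e^{tf}$. Because $\om$ commutes with multiplication by $f$, the off-diagonal block is unchanged while the diagonal blocks become Witten's $d_t=d+t\,df\,\w$ and $-d_t$, giving
\begin{align*}
d_{C,t}=\begin{pmatrix} d_t & \om \\ 0 & -d_t \end{pmatrix}.
\end{align*}
Conjugation by $e^{tf}$ is a chain isomorphism, so $H(\tC(\om),d_{C,t})\cong H(\tC(\om))$ for every $t$. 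Next I would analyze the associated Hodge-type Laplacian $\Delta_{C,t}=d_{C,t}\,d_{C,t}^{*}+d_{C,t}^{*}\,d_{C,t}$. Its diagonal blocks are the classical Witten Laplacians, whose spectra for $t\gg 0$ split into a cluster of exponentially small eigenvalues---one per critical point of the appropriate index---and a complementary part bounded below by a constant growing in $t$. The off-diagonal entry involving $\om$ is a bounded zeroth-order operator, so standard perturbation theory preserves this splitting for the full cone Laplacian. Consequently, in degree $k$ the small-eigenvalue subspace has dimension $m_k+m_{k-1}$, and the Helffer--Sj\"ostrand eigenforms $\phi_q^t$ localized at the critical points $q$ yield an approximate basis arranged as pairs $(\phi_q^t,0)$ and $(0,\theta\,\phi_{q'}^t)$.

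The main obstacle is to identify the induced differential on this small-eigenvalue subspace with $\del_C$ of Definition \ref{Ccdef} in the limit $t\to\infty$. For the diagonal blocks this reduces to the classical Helffer--Sj\"ostrand theorem that the Witten differential converges to the Morse differential $\partial$. The new ingredient is the off-diagonal matrix element, for which one must prove
\begin{align*}
\lim_{t\to\infty}\langle \om\w\phi_{q'}^{t},\,\phi_r^{t}\rangle \;=\; \int_{\overline{\CM(r,q')}}\om
\end{align*}
whenever $\mathrm{ind}(r)=\mathrm{ind}(q')+2$. This requires WKB-type asymptotics for $\phi_q^t$ near its critical point together with a localization argument showing that the leading contribution concentrates on a tubular neighborhood of the two-dimensional moduli space of flow lines from $r$ to $q'$. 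An analogous concentration phenomenon underlies the Morse-theoretic cup product constructions of Austin--Braam \cite{AB} and Viterbo \cite{Viterbo}, and I would adapt their stationary-phase analysis to the two-form $\om$. Once this matrix-element limit is established, the induced differential on the small-eigenvalue subspace coincides with $\del_C$, yielding the chain of isomorphisms $H(\tC(\com))\cong H(\tC(\om),d_{C,t})\cong H(\tC(\om))\cong PH(M)$ required by the theorem.
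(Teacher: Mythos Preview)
Your strategy diverges from the paper's in a crucial way, and the point where it diverges contains a genuine gap.

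The paper does set up the Witten deformation of the cone complex and proves (Sections~2.2--2.4) that for $t$ large the small-eigenvalue space $(F_{C,t}^{[0,1]})_k$ has dimension $m_k+m_{k-1}$. But it does \emph{not} attempt to identify the induced differential $d_{C,t}|_{F_{C,t}^{[0,1]}}$ with $\del_C$. Instead the proof in Section~3.1 is almost entirely algebraic: it uses only the classical chain isomorphism $\mP_t:F_t^{[0,1]}\to C^\bullet(M,f)$ (for the \emph{diagonal} Witten Laplacian), pulls $c(\omega)$ back through $\mP_t$ to define an auxiliary map $\tilde\omega_t=\mP_t^{-1}c(\omega)\mP_t$ on $F_t^{[0,1]}$, and then compares the two cone cohomologies via the coker/ker long exact sequence \eqref{wtHes}. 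The only input about $\omega$ is the Austin--Braam/Viterbo identity $[\mP][\omega]=[c(\omega)][\mP]$ \emph{on cohomology}; no asymptotic computation of off-diagonal matrix elements is needed.

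Your proposed shortcut---computing $\lim_{t\to\infty}\langle \omega\wedge\phi_{q'}^t,\phi_r^t\rangle$ directly---has two problems. First, as stated the limit is simply zero: for $q'\neq r$ the unit-normalized Witten eigenforms $\phi_{q'}^t,\phi_r^t$ localize at distinct points, so their $L^2$ overlap against any bounded zeroth-order operator vanishes as $t\to\infty$. What one really needs is the leading exponential asymptotic and a rescaling, just as in Helffer--Sj\"ostrand for $\partial$; but you have not specified a rescaling, and the entire difficulty is that the natural rescaling extracting $\partial$ from $\langle d_t\phi_q^t,\phi_r^t\rangle$ (a one-instanton quantity) need not be the one extracting $\int_{\overline{\CM(r,q')}}\omega$ from $\langle\omega\phi_{q'}^t,\phi_r^t\rangle$ (a two-instanton quantity). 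Second, your approximate basis $\{(\phi_q^t,0),(0,\theta\phi_{q'}^t)\}$ is not what the cone Laplacian actually produces: the paper's explicit local analysis (Proposition~\ref{prop:harmonicsmallk}) shows the genuine low-lying eigenforms mix the two components, e.g.\ $(-\tau\wedge\zeta_{k-1},\zeta_{k-1})$ and $(\zeta_k,\iota_Z\zeta_k)$. This mixing is exactly what makes a direct matrix-element identification delicate.

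The underlying obstruction is that the diagonal map $\mathrm{diag}(\mP_t,\mP_t):\tC(\omega)\to\tC(c(\omega))$ is \emph{not} a chain map, because $\mP_t\,\omega=c(\omega)\,\mP_t$ holds only on cohomology, not on cochains. The paper's coker/ker argument is precisely designed to exploit that cohomology-level identity without ever needing a chain-level one. If you want to push your direct approach through, you would have to either (i) prove a genuinely new two-parameter stationary-phase lemma giving compatible rescalings for both blocks, or (ii) construct an explicit chain-level correction (an upper-triangular modification of $\mathrm{diag}(\mP_t,\mP_t)$) and show it restricts to an isomorphism on $F_{C,t}^{[0,1]}$. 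Neither is standard, and neither is carried out in the proposal.
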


\smallskip

Theorem \ref{MIso} importantly shows that the cohomology of the cone Morse complex is independent of the choice of both the Morse function and the Riemannian metric used to define $\tC(c(\om))$. 
Moreover, the dependence on the symplectic structure is explicit in the differential $\del_C$ which involves the  integration of $\om$ over flow lines between pairs of critical points of the Morse function.  

We shall prove Theorem \ref{MIso} by means of the analytic method of Witten deformation, as developed in \cite{BL, BZ, Zhang}, to the $\tC(\om)$ complex.  Analogous to the Witten deformation of the de Rham complex \cite{Witten}, we deform the cone differential $d_C$ and its adjoint $d_C^*$ by the Morse function $f$ parameterized by a real parameter $t\geq 0$: 
\begin{align*}
d_{C,t} = e^{-tf}d_C\,e^{tf}\,,\qquad  d^*_{C,t}= e^{tf} d^*_C\,e^{-tf}\,.   
\end{align*}
This deformation is just a conjugation by $e^{tf}$, and hence, the cohomologies of the deformed cone complex $(\tC(\om), d_{C,t})$ do not vary with $t$.  However, the deformation has a significant effect on the harmonic forms, which provides an isomorphic description of the cohomologies.  Specifically, the deformed cone Laplacian operator depends on $t$  
\begin{align*}\Delta_{C,t}=d_{C,t}d_{C,t}^*+d_{C,t}^*d_{C,t}\,, 
\end{align*}
and in fact, its highest order $t$ dependence is given by $t^2||df||_x^2$.  Hence, as $t\rightarrow \infty$, the harmonic forms must localize near $df=0\,$, i.e. the critical points of $f$. This allows an identification, as $t\rightarrow \infty$, of each harmonic form of  $\Delta_{C,t}$ with a critical point  $p\in Crit(f)$.  However, for finite $t$ sufficiently large, some of the harmonic form in the limit of $t\rightarrow \infty$
becomes no longer harmonic.  It is thus useful to consider the cone subspace $F_{C,t}^{[0,1]}\subset \tC(\om)$ consisting of eigenforms of $\Delta_{C,t}$ with eigenvalue $\lambda \in [0,1]$.  Note that $(F_{C,t}^{[0,1]}, d_{C,t})$ is also a cochain complex 
since, $[\Delta_{C,t}, d_{C,t}]=0\,$.  Of interest, for $t$ sufficiently large, the number of generators of $F_{C,t}^{[0,1]}$ matches exactly that of $\tC(\com)\,$.  
This leads us to establish an isomorphism between the cohomologies 
 $H(\tC(\com))\cong 
H(F_{C,t}^{[0,1]})\cong 
 H(\tC(\om))\cong PH(M)$, proving Theorem \ref{MIso}.  
 This successfully addresses the first question.

For the second question, we use the established quasi-isomorphism to obtain Morse-type inequalities for $H(\tC(\om))\cong PH(M)$.  With $m_k$ denoting the number of Morse critical points with index $k$, we are able to prove the following:
\begin{thm}\label{CMineq}
Let $(M, \om, f, g)$ be a closed, symplectic manifold with the Morse function $f$ and the Riemannian metric $g$ satisfying the Morse-Smale transversality condition.   Then
there exists a polynomial $Q(s)$ with non-negative integer coefficients such that
\begin{align}\label{pIneq}
(1+s)\sum_{k=0}^{2n} m_k\, s^k - (s+s^2)\sum_{k=0}^{2n-2} v_k\, s^k= \sum_{k=0}^{2n+1} b^{\om}_k\, s^k \, +\, (1+s)\, Q(s) \,,
\end{align}
where $b^\om_k=\dim H^k(\tC(\om))$ and $v_k = \rk \left(\com: C^k(M, f) \to C^{k+2}(M,f)\right)$.

Alternatively, we have the following Morse-type inequalities: \\
(A) Weak cone Morse inequalities 
\begin{align}
b_k^{\om} \ \leq \,
m_k-v_{k-2}+m_{k-1}-v_{k-1}\,, \quad\quad  k = 0, \dots, 2n+1\,;
\end{align}
(B) Strong cone Morse inequalities
\begin{align}
\sum_{i=0}^k (-1)^{k-i}b^\om_i\, \leq 
\, m_k - v_{k-1}\,,\qquad\qquad  k = 0, \dots, 2n+1\,.
\end{align}
Furthermore, the above inequalities become equalities when the Morse function $f$ is perfect, i.e. the Betti numbers $b_k=\dim H^k_{dR}(M)=m_k$ for all $k=0, \ldots, 2n\,$.
\end{thm}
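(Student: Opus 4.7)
My plan is to use Theorem \ref{MIso} to transfer the problem to the finite-dimensional cone Morse cochain complex $\tC(\com)$ and apply the classical rank-nullity argument for cochain complexes, refined by a lower bound on $\rk \del_C$ that exposes the $v_k$ contribution.

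By Theorem \ref{MIso}, $b^\om_k = \dim H^k(\tC(\com))$, and $\tC^k(\com) = C^k(M,f) \oplus C^{k-1}(M,f)$ has dimension $m_k + m_{k-1}$. For any finite-dimensional cochain complex $(X^\bullet, d)$, the identity $\dim X^k = \dim H^k(X) + \rk(d|_{X^{k-1}}) + \rk(d|_{X^k})$ yields the polynomial relation
\begin{equation*}
\sum_k \dim X^k\, s^k = \sum_k \dim H^k(X)\, s^k + (1+s)\,\tilde Q(s),
\end{equation*}
where $\tilde Q(s) = \sum_k \rk(d|_{X^k})\, s^k$ has non-negative integer coefficients. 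Specializing to $\tC(\com)$ gives
\begin{equation*}
(1+s)\sum_k m_k s^k = \sum_k b^\om_k s^k + (1+s)\,\tilde Q(s).
\end{equation*}

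The next step is to bound $\rk(\del_C|_{\tC^k})$ from below. The image $\im(\del_C|_{\tC^k})$ contains the subspace $\del_C(\{0\}\oplus C^{k-1}) = \{(\com\, y, -\del y) : y\in C^{k-1}\}$, and projecting this subspace onto the first factor $C^{k+1}$ gives $\im(\com : C^{k-1}\to C^{k+1})$, of dimension $v_{k-1}$. Hence $\rk(\del_C|_{\tC^k}) \geq v_{k-1}$, so the polynomial $Q(s) := \tilde Q(s) - \sum_k v_k s^{k+1}$ has non-negative integer coefficients. Substituting $\tilde Q(s) = s\sum_k v_k s^k + Q(s)$ into the above identity and using $s(1+s)\sum_k v_k s^k = (s+s^2)\sum_k v_k s^k$ produces exactly the identity \eqref{pIneq}. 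The weak inequalities follow by equating $s^k$-coefficients; the strong inequalities follow by forming alternating partial sums and applying the telescoping identity $\sum_{i=0}^k (-1)^{k-i}(a_i + a_{i-1}) = a_k$ with $a_i = m_i$ and then $a_i = v_{i-1}$.

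Finally, when $f$ is perfect, $b_k = m_k$ forces $\del \equiv 0$ on $C^\bullet(M,f)$. Then $\del_C(x,y) = (\com\, y, 0)$, and $\im(\del_C|_{\tC^k})$ equals $\im(\com:C^{k-1}\to C^{k+1}) \oplus 0$, of dimension exactly $v_{k-1}$. Thus $Q(s) \equiv 0$ and both sets of inequalities collapse to equalities. The one point requiring verification beyond bookkeeping is the lower bound $\rk(\del_C|_{\tC^k}) \geq v_{k-1}$, which is read off directly from the matrix form \eqref{delCdef} of $\del_C$ acting on pairs with vanishing first component.
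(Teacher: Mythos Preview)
Your proof is correct and follows a genuinely different route from the paper's. The paper proceeds via the long-exact-sequence identity $b_k^\om = (b_k - r_{k-2}) + (b_{k-1} - r_{k-1})$ with $r_j = \rk\bigl([\com]\colon H^j_{C(f)} \to H^{j+2}_{C(f)}\bigr)$, and then appeals to the separate bounds $b_j \leq m_j$ and $r_j \leq v_j$. You instead apply the standard Poincar\'e-polynomial identity for a finite cochain complex directly to $(\tC^\bullet(\com), \del_C)$ and isolate the $v_{k-1}$ contribution via the block-structure observation $\rk(\del_C|_{\tC^k}) \geq v_{k-1}$. This buys you the polynomial identity \eqref{pIneq} in one stroke (the paper's Section~3.2 derives only the two families of inequalities, not the polynomial form) and makes the perfect-Morse equality case a one-line computation of $\rk \del_C$. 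The paper's route, by contrast, yields along the way the exact expression $b_k^\om = b_k - r_{k-2} + b_{k-1} - r_{k-1}$ in terms of de Rham data, which is of independent interest. Your direct rank bound also sidesteps a delicate point in the paper's passage from $b_j \leq m_j$ and $r_j \leq v_j$ to the stated inequalities: since $r_j \leq v_j$ gives $-r_j \geq -v_j$, the two cited bounds do not by themselves force $b_k - r_{k-2} \leq m_k - v_{k-2}$, so some additional argument is implicitly needed there; your approach requires none.
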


\smallskip

As was our goal for the second question, our cone Morse inequalities \eqref{wcMI}-\eqref{scMI} can certainly vary with the symplectic structure $\om$.  Specifically, the $b^{\om}_k\,$'s on the left-hand side and the $v_k$'s on the right-hand side both are defined with dependence on $\om\,$.  This is in contrast to the $m_k$'s which are fixed by the choice of the Morse function $f$ on $M$. 

Beyond addressing our two main questions, let us point out that our analytic study of the symplectic cone Morse theory should be extendable to analyze symplectic manifolds with group actions.  In particular, it would be interesting to study manifolds with hamiltonian group action and work out its equivariant or more general invariant cone cohomologies and also their corresponding cone Morse theory.  
We note of a recent work \cite{HaoZhuang} that used Witten deformation to study torus actions and their related invariant cohomologies. 
Moreover, the Witten deformation method has been successfully applied (see, for example \cite{MaZ, TiZ})  to prove Guillemin-Sternberg's conjecture \cite{GS} concerning the commutativity of symplectic reduction and geometric quantization.  It is interesting to ask how symplectic reduction affect the TTY cohomologies $F^pH(M, \om)$, or more directly its cone equivalent $H(\tC(\om^{p+1}))$, and consider a different type of quantization of $(M, \omega)$ making use of the symplectic cone complex.  This paper represents a first step in addressing these other interesting questions.  

Finally, we mention that the de Rham cone complex and its Morse theory defined here with respect to the symplectic structure $\om$ has a generalization that can be studied in a very general context. In a companion paper \cite{CTT3}, we describe a general cone complex and its cone Morse theory on any oriented manifold $M$, with respect to any degree $\ell$ form $\psi\in \Om^\ell(M)\,$ that is $d$-closed.  It is a challenge to carry out the analytic Witten deformation method in this general setting, and hence, the discussion in \cite{CTT3} utilizes purely algebraic methods.

\

\noindent{\it Acknowledgements.~} 
We thank Hiro Lee Tanaka,  Weiping Zhang, and Jiawei Zhou for helpful discussions.
The second author was supported in part by NSF Grants DMS-1800666 and DMS-1952551.  The third author would like to acknowledge the support of the Simons Collaboration Grant No. 636284.

\section{Witten deformation method}
We apply the Witten deformation method (see for example, Zhang's book \cite{Zhang}) to analyze the cone complex $\tC(\omega)$.  In this section, we will introduce the deformed cone Laplacian and analyze its harmonic solutions and give a bound for the eigenvalues of non-harmonic eigenforms.  We shall begin first with some preliminaries and also introduce our notations.  

\subsection{Cone Laplacian and its deformation}

Let $(M^{2n}, \om)$ be a closed symplectic manifold and let $g$ be a compatible Riemannian metric.  For $\eta_k, \eta'_k\in \Om^k(M)$, we have the standard inner product,
\begin{align}\label{Minprod}
\langle\eta_k, \eta'_k\rangle = \int_M \eta_k \w * \eta'_k \,,
\end{align}
where $*:\Om^k(M)\rightarrow \Om^{2n-k}(M)$ is the Hodge star operator. As introduced in the Introduction, we are interested in the  cone forms with respect to the $\om\w$ map:
\begin{align*}
\tC^k(\om)(M)=\Omega^k(M)\oplus \theta\, \Omega^{k-1}(M)=\left\{\eta_k+\theta\xi_{k-1} \Big|\, \eta_k \in \Omega^k(M), \xi_{k-1} \in \Omega^{k-1}(M)\right\}, \end{align*}
with $k=0, 1, 2, \ldots, 2n+1\,$, and $\theta$ should be thought of as a formal one-form parameter, with the following two  properties: (1) $d\theta=\om$ and (2) $\theta\w\theta = 0\,$.  The cone forms are essentially a pair of differential forms, and so the standard inner product \eqref{Minprod} on $M$ can be used to define a natural inner product on $\tC^*(\om)(M)$, 
\begin{align}\label{cinprod}
\langle\eta_k + \theta \xi_{k-1}, \eta'_k + \theta \xi'_{k-1}\rangle_C \,= \, \langle\eta_k, \eta'_k\rangle + \langle\xi_{k-1}, \xi'_{k-1}\rangle\,. 
\end{align}
This cone inner product can also be expressed in terms of a Hodge star-type operator.  We define $*_C: \tC^k(\om) \rightarrow \tC^{2n+1-k}(\om)$ by
\begin{align}\label{Cstar}
*_C \left(\eta_k + \theta \xi_{k-1}\right) = *\xi_{k-1} + \theta\, (-1)^k *\eta_k\,.    
\end{align}
We can then write
\begin{align*}
\langle\eta_k + \theta \xi_{k-1}, \eta'_k + \theta \xi'_{k-1}\rangle_C 
&=\int_M \dfrac{\partial}{\partial\theta} \left(\left(\eta_k + \theta \xi_{k-1}\right) \w *_C \left(\eta'_k + \theta \xi'_{k-1}\right)\right)\\
&=\int_M \eta_k \w * \eta'_k \, + \, \xi_{k-1} \w * \xi'_{k-1}
\end{align*}
where the derivative  $(\partial/\partial\theta)$ satisfies  $(\partial/\partial\theta)(\theta\eta_k) =\eta_k$ for any $\eta_k\in \Om^k(M)$.  (For ease of notation, when it is clear that we are considering the inner product for cone forms, we will simply write $\langle\  ,\  \rangle$ to denote the cone inner product of \eqref{cinprod} and leave out the $C$ subscript.)

Turning to the differential operators acting on $\tC^*(\om)$, the differential, $d_C: \tC^k(\om)\to \tC^{k+1}(\om)$ is defined to be  
\begin{align*}
d_C (\eta_k+\theta  \xi_{k-1})=d\eta_k+\omega \wedge \xi_{k-1}-\theta  d\xi_{k-1}\,,    
\end{align*}
which corresponds simply to the exterior derivative acting on $(\eta_k+\theta  \xi_{k-1})$ and using $d\theta = \om$.  Clearly, $d_C\, d_C =0\,$.
And with respect to the cone inner product \eqref{cinprod}, the adjoint of $d_C$ has the form
\begin{align}
d_C^*(\eta_{k}+\theta\xi_{k-1})&= d^*\eta_k+\theta(\Lambda \eta_k-d^*\xi_{k-1})\nonumber\\
&=(-1)^{k}*_C d_C *_C (\eta_{k}+\theta\xi_{k-1})\label{dsCdef}
\end{align}
where $\Lambda=\om^*$ denotes the adjoint of $\om$ with respect to the inner product \eqref{Minprod} on $M$  and has the expression $\Lambda =(-1)^k * \omega\, *\,$ when acting on a $k$-form. 
For convenience, we will often express the cone form as a two-vector, $\sigma_k=\begin{pmatrix}\eta_k \\ \xi_{k-1}\end{pmatrix} \in \tC^k(\omega)$.  In this notation, $d_C$ and its adjoint $d_C^*$ have the following matrix form
\begin{align*}
d_C=\begin{pmatrix} d & \omega \\ 0 & -d \end{pmatrix}\,, \qquad d_C^*=\begin{pmatrix} d^* & 0 \\ \Lambda & -d^* \end{pmatrix}\,.
\end{align*}

As in Witten \cite{Witten}, we deform the above differential operators by a Morse function $f$ parameterized by a non-negative real number $t\in \mathbb{R}^+$.  First, the deformed exterior derivative and its adjoint take the form
\begin{align}\label{dtdts}
d_t = e^{-tf}d\; e^{tf} = d + t\, df\,, \qquad \quad
d_t^* = e^{tf}d^*\, e^{-tf} = d^* + t\,\iota_{\nabla f}\,.
\end{align}
As for the cone differentials $(d_C, d_C^*)$, their deformation can be expressed simply in terms of $(d_t, d_t^*)$ as follows, 
\begin{align}\label{dcdcs}
d_{C, t}=e^{-tf}d_C\:e^{tf}
=\begin{pmatrix} d_t & \omega \\ 0 & -d_t \end{pmatrix}\,, \qquad
d_{C, t}^*=e^{tf}d_{C}^*\:e^{-tf}
=\begin{pmatrix} d_t^* & 0 \\ \Lambda & -d_t^* \end{pmatrix}.
\end{align}

Let us now turn our attention to the Laplacian operator associated to the de Rham cone complex:
\begin{align}\label{DeltaC}
\Delta_C = d_C d_C^* + d_C^* d_C = \begin{pmatrix}  \Delta + \om \Lambda & -d\Lambda + \Lambda d  \\ -\om d^* + d^*\om & \Delta + \Lambda \om \end{pmatrix},
\end{align}
where $\Delta= dd^* + d^*d$ is the standard de Rham Laplacian.    Under deformation by a Morse function $f$, it becomes 
\begin{align}
\Delta_{C,t} &= d_{C,t}d^*_{C,t}+ d^*_{C,t}d_{C,t}
=\begin{pmatrix} \Delta_t+\omega \Lambda & -d_t^{\Lambda*} \\ -d_t^\Lambda & \Delta_t+\Lambda\omega  \end{pmatrix} ,\label{DeltaCt}
\end{align} 
where 
\begin{align*}
\Delta_t&= d_t d_t^* + d_t^* d_t = \Delta + t (\mathcal{L}_{\nabla f}+\mathcal{L}^*_{\nabla f}) + t^2 ||df||_x^2, \\
d_t^\Lambda &= d_t\Lambda-\Lambda d_t
=d\Lambda-\Lambda d+t(df\Lambda - \Lambda df)=d^\Lambda+t(df\Lambda - \Lambda df),  \\
d_t^{\Lambda*} &= \omega d_t^*-d_t^*\omega
=\omega d^*-d^*\omega+t(\omega\,\iota_{\nabla f} - \iota_{\nabla f}\,\omega)=d^{\Lambda*}+t(\omega\iota_{\nabla f} - \iota_{\nabla f}\omega). 
\end{align*}
In the above calculation, we used the notation $d^\Lambda:=d\Lambda - \Lambda d\,$, and $d^{\Lambda*}$ is its adjoint.  Furthermore, $\mathcal{L}_{\nabla f}$ is the Lie derivative with respect to the gradient of $f$, with  $\mathcal{L}^*_{\nabla f}$ being its adjoint, and $||df||_x^2=g^{ij}\partial_if\partial_jf$ is the pointwise norm of $df$ with respect to the Hodge metric on forms.   We will call $\Delta_C$ the cone Laplacian and $\Delta_{C,t}$ the deformed cone Laplacian.

It turns out to be useful to also consider deforming the differential operators with respect to $(-f)$, which is also a Morse function.  We will denote operators deformed by $(-f)$ with a $(-t)$ subscript.  For instance,  
\begin{align*}d_{C, -t}:=e^{-t(-f)}d_C\,e^{t(-f)}
=\begin{pmatrix} d_{-t} & \omega \\ 0 & -d_{-t} \end{pmatrix}\,, \qquad
d_{C, -t}^*:=e^{t(-f)}d_{C}^*\,e^{-t(-f)}
=\begin{pmatrix} d_{-t}^* & 0 \\ \Lambda & -d_{-t}^* \end{pmatrix}.
\end{align*}
and similarly, $\Delta_{C, -t}:= d_{C, -t} d_{C, -t}^*  + d_{C, -t}^* d_{C, -t}\, $.  Noting from \eqref{Cstar} that $*_C *_C = Id$ and from \eqref{dsCdef} that $d_C^* = (-1)^k *_C d_C \: *_C$ acting on $\sigma_k\in \tC^k(\om)$, we find the following relations:
\begin{align}\label{ntrels}
d_{C, -t} = (-1)^{k+1} *_C d^*_{C,t}\, *_C\,, \qquad
d^*_{C,-t} = (-1)^k*_C d_{C,t}\:*_C\,,
\end{align}
acting on $\sigma_k\in \tC^k(\om)$. 
These immediately imply the following:
\begin{lemma}\label{2.1}
The cone form  $\sigma=\begin{pmatrix} \eta_k \\ \xi_{k-1} \end{pmatrix}\in \tC^k(\omega)$ is a harmonic solution of $\Delta_{C, -t}$ for the Morse function $(-f)$, if and only if $*_C\,\sigma=\begin{pmatrix} *\xi_{k-1} \\ (-1)^k*\eta_k \end{pmatrix}\in \tC^{2n+1-k}(\om)$ is a harmonic solution of $\Delta_{C, t}$ for the Morse function $f$.  In particular, the harmonic conditions $d_{C, -t}\,\sigma=0$ and $d^*_{C,-t}\,\sigma=0$ hold if and only if $d_{C, t}(*_C\,\sigma)=0$ and $d^*_{C, t}(*_C\,\sigma)=0\,$.  
\end{lemma}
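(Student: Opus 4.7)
The plan is to deduce Lemma~\ref{2.1} directly from the identities \eqref{ntrels} and the involutivity $*_C\circ *_C = \mathrm{Id}$, reducing the Laplacian condition to the vanishing of $d_{C,\pm t}$ and their adjoints via standard Hodge theory.

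First I would verify that the operators $d^*_{C,\pm t}$ obtained by conjugating $d^*_C$ by $e^{\pm tf}$ are the genuine $L^2$-adjoints of $d_{C,\pm t}$ with respect to the cone inner product \eqref{cinprod}. This is immediate, since $e^{\pm tf}$ is a positive self-adjoint multiplication operator on cone forms and $d_C^*$ is by construction the adjoint of $d_C$. Consequently $\Delta_{C,\pm t}$ is a non-negative self-adjoint elliptic operator on the closed manifold, and standard elliptic theory gives $\Delta_{C,\pm t}\sigma = 0$ if and only if both $d_{C,\pm t}\sigma = 0$ and $d^*_{C,\pm t}\sigma = 0$. This reduces the first assertion of the lemma to its ``in particular'' clause.

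Next I would check that $*_C\circ *_C = \mathrm{Id}$ on $\tC^\bullet(\om)$. A short computation using \eqref{Cstar} together with $**\,=\,(-1)^k$ on $\Om^k(M^{2n})$ (valid since $2n$ is even) shows the relevant signs cancel. With involutivity in hand, the ``in particular'' clause follows from \eqref{ntrels}: assuming $d_{C,-t}\sigma = 0$ for $\sigma\in\tC^k(\om)$, applying $*_C$ and invoking the first identity in \eqref{ntrels} yields
\[ 0 \,=\, *_C\, d_{C,-t}\sigma \,=\, (-1)^{k+1}\, *_C\, *_C\, d^*_{C,t}(*_C\sigma) \,=\, (-1)^{k+1}\, d^*_{C,t}(*_C\sigma), \]
so $d^*_{C,t}(*_C\sigma) = 0$. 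An identical argument with the second identity of \eqref{ntrels} shows that $d^*_{C,-t}\sigma = 0$ implies $d_{C,t}(*_C\sigma) = 0$. Each implication reverses upon applying $*_C$ once more, so both equivalences are bidirectional, and combining them with the Hodge reduction gives the full lemma.

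I do not anticipate any serious obstacle: the entire content of the lemma is essentially already packaged in the relations \eqref{ntrels} established just before the statement. The only care required is in the sign and degree-shift bookkeeping when $*_C$ is composed with itself, or when it exchanges $\tC^k(\om)$ and $\tC^{2n+1-k}(\om)$, and these checks are routine.
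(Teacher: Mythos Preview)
Your proposal is correct and follows essentially the same approach as the paper, which simply states that the relations \eqref{ntrels} ``immediately imply'' Lemma~\ref{2.1} without spelling out details. You have just made explicit the sign and involutivity bookkeeping that the paper leaves to the reader.
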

The relation also extends to all eigenforms of the deformed cone Laplacian.
\begin{lemma}\label{2.2}
The cone form $\sigma\in \tC^k(\om)$  is an eigenform of $\Delta_{C, -t}\,$ if and only if $(*_C
\, \sigma_k)\in \tC^{2n+1-k}(\om)$ is an eigenform of $\Delta_{C, t}\,.$
\end{lemma}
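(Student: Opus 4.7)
The plan is to upgrade the harmonic statement of Lemma \ref{2.1} to the full spectral statement by establishing the intertwining identity
\begin{align*}
*_C\, \Delta_{C,-t} \;=\; \Delta_{C,t}\, *_C \qquad \text{on } \tC^k(\omega).
\end{align*}
Once this is in hand, the lemma follows at once: if $\Delta_{C,-t}\sigma = \lambda \sigma$, then $\Delta_{C,t}(*_C\sigma) = *_C(\Delta_{C,-t}\sigma) = \lambda (*_C \sigma)$, and the converse direction uses the involutivity $*_C *_C = \mathrm{Id}$ noted just after \eqref{Cstar}, which makes $*_C$ a bijection $\tC^k(\omega) \to \tC^{2n+1-k}(\omega)$.

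To prove the intertwining identity, the first step is to expand $\Delta_{C,-t} = d_{C,-t}d^*_{C,-t} + d^*_{C,-t}d_{C,-t}$ and rewrite each factor using the relations \eqref{ntrels}, applied at the appropriate degree. Explicitly, on $\tC^j(\omega)$ one has $d_{C,-t} = (-1)^{j+1} *_C\, d^*_{C,t}\, *_C$ and $d^*_{C,-t} = (-1)^j *_C\, d_{C,t}\, *_C$. For $\sigma \in \tC^k(\omega)$ note that $d^*_{C,-t}\sigma \in \tC^{k-1}(\omega)$ and $d_{C,-t}\sigma \in \tC^{k+1}(\omega)$, so the signs that appear when composing are those at degrees $k-1$, $k$, and $k+1$. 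A short bookkeeping using $*_C *_C = \mathrm{Id}$ collapses the composites to
\begin{align*}
*_C\, d_{C,-t}\, d^*_{C,-t}\, \sigma \;=\; d^*_{C,t}\, d_{C,t}\, (*_C\, \sigma), \qquad *_C\, d^*_{C,-t}\, d_{C,-t}\, \sigma \;=\; d_{C,t}\, d^*_{C,t}\, (*_C\, \sigma),
\end{align*}
and summing yields the intertwining identity.

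The calculation is elementary; the only care required is in tracking the degree-dependent signs in \eqref{ntrels}, since the factors arising at degrees $k-1$, $k$, and $k+1$ must combine so that each composite picks up an even power of $-1$. I do not anticipate a genuine obstacle: this argument is simply the spectral version of the reasoning used to prove Lemma \ref{2.1}, and the biconditional statement of Lemma \ref{2.2} then follows formally from the fact that $*_C$ is an involution, hence a norm-preserving isomorphism that sends the $\lambda$-eigenspace of $\Delta_{C,-t}$ bijectively onto the $\lambda$-eigenspace of $\Delta_{C,t}$.
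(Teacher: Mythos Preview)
Your proposal is correct and follows essentially the same route as the paper: both use the relations \eqref{ntrels} together with $*_C*_C=\mathrm{Id}$ to show $\Delta_{C,-t}=*_C\,\Delta_{C,t}\,*_C$ (equivalently, your intertwining identity $*_C\,\Delta_{C,-t}=\Delta_{C,t}\,*_C$), from which the eigenform correspondence is immediate. Your sign bookkeeping at degrees $k-1$, $k$, $k+1$ matches the paper's computation exactly.
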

\begin{proof}
The statement follows from \eqref{ntrels}.  Specifically, acting on $\sigma_k\in \tC^k(\om)$, we have
\begin{align*}
\Delta_{C, -t}&= d_{C, -t} d_{C, -t}^*  + d_{C, -t}^* d_{C, -t} \\
&=\left[(-1)^k*_Cd^*_{C,t}*_C\right]\left[(-1)^k *_C d_{C,t}*_C\right]+\left[(-1)^{k-1}*_Cd_{C,t}*_C\right]\left[(-1)^{k+1}*_Cd^*_{C,t}*_C\right]\\
&=*_C\, \Delta_{C,t}\,*_C
\end{align*}
\end{proof}

\subsection{Local harmonic solutions of the deformed cone Laplacian}\label{harmsol}

We are interested in studying the spectrum of the deformed cone Laplacian $\Delta_{C,t}$ when $t$ is large.  Following Witten's observation in \cite{Witten}, as $t\to \infty$, the order $t^2$ term of $\Delta_{C,t}$ in \eqref{DeltaCt}, $t^2 ||df||_x^2\,$, dominates, and so eigenforms of $\Delta_{C,t}$ must localize around the critical points of the Morse function $f$.  This localization greatly simplifies the study of the $\Delta_{C,t}$ spectrum and it turns out understanding the local eigenforms of $\Delta_{C,t}$ in a local chart around a critical point of $f$ is sufficient for obtaining the Morse-type inequalities for $H(\tC(\om))$.  In this subsection, we will write down the local harmonic solutions for $\Delta_{C,t}$ for large $t$ around critical points of $f$.  The local non-harmonic eigenforms will be taken up in the next subsection. 

For studying the eigenforms of $\Delta_{C,t}$ in a local neighborhood around $p\in Crit(f)$, it is useful to work in a local coordinate chart where $(\om, g, f)$ all have standard canonical forms. For this, Stratmann \cite{Stratmann} showed that in the neighborhood of any $p\in Crit(f)$, the pullback of $f$ under a properly chosen symplectomorphism can be expressed in the following form, 
$$f=\displaystyle f(p)+\sum_{\ell=1}^{n_f(p)} -x_\ell^2/2\; +\sum_{\ell=n_f(p)+1}^{2n} x_\ell^2/2.
$$ 
where $n_f(p)$  denotes the index at the critical point $p$.  (We shall often use the notation $n_f(p)$ instead of $ind(p)$ to emphasize the dependence of the index on $f$.)  In other words, it is possible to modify the Morse function via a series of local symplectomorphisms if needed so that around any $p\in Crit (f)$ we have what we shall call a compatible coordinate chart.
\begin{definition}\label{Compcoord}
A local coordinate chart $\{x_j\}_{j=1, \dots,2n}$ around a critical point $p\in Crit(f)$ is called a {\bf compatible coordinate chart} if the following properties are satisfied simultaneously: \begin{itemize}
\item Darboux coordinates, i.e. $\omega = \displaystyle\sum_{i=1}^n dx_i \wedge dx_{i+n}\,$;
\item Normal coordinates, i.e. $g_{ij}(x)=\delta_{ij}+O(|x|^2)\,$;
\item Morse coordinates, $f =f(p)+\displaystyle\sum_{\ell=1}^{n_f(p)} -x_\ell^2/2\;+\sum_{\ell=n_f(p)+1}^{2n} x_\ell^2/2$. 
\end{itemize}
\end{definition}

\medskip

In the following, when discussing localized eigenform solutions of $\Delta_{C,t}\,$, we will by default work in such a compatible coordinate chart.

For local harmonic solutions $\sigma_k = \begin{pmatrix} \eta_k\\ \xi_{k-1}\end{pmatrix}\in \tC^k(\om)$, the harmonic conditions  $d_{C, t}\,\sigma_k=d^*_{C,t}\,\sigma_k=0\,$, with $(d_{C,t}, d^*_{C,t})$ given in  \eqref{dcdcs}, impose the following four conditions:
\begin{alignat}{3}\label{dctcond}
d_{C, t}\,\sigma_k&=0:\qquad\qquad  &&(a)
\ d_t\eta_k+\omega \wedge \xi_{k-1} = 0\,, \qquad\quad&&
(b)\ d_t\xi_{k-1}=0\,;
\\
\label{dsctcond}
d^*_{C,t}\,\sigma_k&=0:\qquad
&&(c) \ d_t^*\eta_k=0 \,,  
&&
(d)\ d_t^*\xi_{k-1}-\Lambda \eta_k =0\,.
\end{alignat}
Notice that if both $\om$ and $\Lambda$ were set to zero, then the above four conditions become just $d_t\eta_k = d^*_t\eta_k =0$, and $d_t \xi_{k-1}=d^*_t\xi_{k-1} =0$, which are just the usual deformed harmonic conditions.
Such solutions were described by Witten \cite{Witten} which we recall here.
\begin{lemma}[Witten \cite{Witten}]\label{WittenH}
Around a critical point $p\in Crit(f)$ with index $n_f(p)=k$ described by a local compatible  coordinate chart  $\{x_i\}_{i=1,\ldots,2n}$\,, 
such that $g_{ij}=\delta_{ij}$ and 
\begin{align*}
f(x) = f(p) + \frac{1}{2}\left(-x^2_1 - \ldots - x^2_k +x^2_{k+1}+\ldots x^2_{2n}\right),
\end{align*}
there is a one-dimensional $k$-form solution 
generated by 
\begin{align}\label{Wsol}
\zeta_k:=e^{-t|x|^2/2}dx_1\wedge \ldots \wedge dx_k\,,   
\end{align}
that satisfies the deformed harmonic conditions, i.e. 
$d_t\zeta_k = d^*_t \zeta_k =0\,$.
\end{lemma}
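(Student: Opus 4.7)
The plan is to verify both harmonic conditions by direct computation in the given compatible coordinate chart, exploiting the fact that in such a chart $df$ and $\nabla f$ take the particularly simple linear form dictated by the Morse normal form. Specifically, from the given expression for $f$ we read off $df = \sum_{i=1}^{2n} \epsilon_i x_i\, dx_i$ and $\nabla f = \sum_{i=1}^{2n} \epsilon_i x_i\, \partial_i$, where $\epsilon_i = -1$ for $i \le k$ and $\epsilon_i = +1$ for $i > k$; the fact that $g_{ij} = \delta_{ij}$ at the critical point (with only $O(|x|^2)$ corrections that will not contribute to the leading verification) lets us use the flat Hodge star and hence the flat formulas $d_t = d + t\, df\wedge$ and $d_t^* = d^* + t\, \iota_{\nabla f}$ from \eqref{dtdts}.

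For the $d_t$ condition, I would expand
\begin{equation*}
d\zeta_k \;=\; -t\sum_{i=1}^{2n} x_i\, e^{-t|x|^2/2}\, dx_i \wedge dx_1 \wedge \cdots \wedge dx_k,
\end{equation*}
and observe that the terms with $i \le k$ vanish by $dx_i\wedge dx_i = 0$, leaving only the sum over $i > k$. On the other hand,
\begin{equation*}
t\, df \wedge \zeta_k \;=\; t\sum_{i=1}^{2n} \epsilon_i x_i\, e^{-t|x|^2/2}\, dx_i \wedge dx_1 \wedge \cdots \wedge dx_k,
\end{equation*}
where again only the $i > k$ terms survive, and these come with $\epsilon_i = +1$. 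The two contributions cancel exactly, giving $d_t \zeta_k = 0$.

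For the $d_t^*$ condition the computation is symmetric. Using $d^*\alpha = -\sum_j \iota_{\partial_j}(\partial_j \alpha)$ on a form with constant cotangent part in Euclidean coordinates, one gets $d^*\zeta_k = t\sum_{j=1}^{2n} x_j\, e^{-t|x|^2/2}\, \iota_{\partial_j}(dx_1\wedge\cdots\wedge dx_k)$, in which only $j \le k$ contributes. The interior product term $t\,\iota_{\nabla f}\zeta_k$ produces the same sum weighted by $\epsilon_j$, and for $j \le k$ one has $\epsilon_j = -1$, so the two pieces again cancel and $d_t^*\zeta_k = 0$.

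The only mild subtlety is the $O(|x|^2)$ deviation of the metric from the flat one, which means that strictly speaking $d^*$ and the pointwise norm $\|df\|_x^2$ differ from their flat counterparts by terms that are higher order in $|x|$. Since the lemma is a statement of exact local harmonicity of $\zeta_k$, I would justify the use of the flat formulas by noting that the analysis of $\Delta_{C,t}$ that this lemma feeds into (as laid out in the paragraph introducing Section \ref{harmsol}) is carried out as $t\to\infty$ with eigenforms localized at $p$, so the relevant computation is the leading-order flat one; the exact harmonicity of $\zeta_k$ therefore holds in the model flat problem on $\mathbb{R}^{2n}$ with the quadratic $f$, which is the standard Witten setting and what the proof actually requires. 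I expect the genuinely non-trivial content of the lemma to lie not in the cancellation itself, which is essentially algebraic, but in confirming that the $\om$- and $\Lambda$-free Witten model is the correct building block for the cone problem, a point that will be leveraged in the next subsection when these $\zeta_k$ are assembled into solutions of the full system \eqref{dctcond}--\eqref{dsctcond}.
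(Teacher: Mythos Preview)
Your direct verification is correct and is precisely the standard computation one does to check this classical fact. Note, however, that the paper does not supply its own proof of this lemma: it is stated with attribution to Witten and used as a known input, so there is no in-paper argument to compare against. Your remark about the $O(|x|^2)$ metric corrections is apt and matches how the paper uses the result, namely as an exact statement for the flat model problem on $\mathbb{R}^{2n}$; the one part of the lemma you do not touch is the one-dimensionality of the solution space, but that is also part of the classical Witten analysis and is effectively subsumed by the eigenvalue bounds established later in the paper.
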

Hence, in the case where $\om=0$, we would have two types of localized harmonic solutions for $\sigma_k\in \tC^k(\om)\,$, generated by $(\zeta_{k}, 0)$ at all $p\in Crit(f)$ with $n_f(p)=k$, and also, $(0, \zeta_{k-1})$ at all $q\in Crit(f)$ with $n_f(q)=k-1\,.$
It turns out that the existence of two types of harmonic generators for each cone degree $k$ persists even when $\om\neq
0\,.$
\begin{prop}[Local harmonic solutions of the deformed cone Laplacian] 
\label{prop:harmonicsmallk} 
For $\tC^k(\om)$, there exist two types of local harmonic solutions of the deformed cone Laplacian $\Delta_{C,t}\,$: localized about critical points $p\in Crit(f)$ with index $n_f(p)=k$ and  localized about critical points $q\in Crit(f)$ with index $n_f(q)=k-1$.  Each index $k$ or $k-1$ critical point has one generating harmonic solution. 
\end{prop}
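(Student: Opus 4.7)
The plan is to construct, at each critical point $p\in Crit(f)$, a local harmonic generator of $\Delta_{C,t}$ in $\tC^k(\om)$ by taking a Witten form $\zeta_m$ from Lemma~\ref{WittenH} as the leading ansatz and then adding lower-order corrections forced by the off-diagonal $\omega$- and $\Lambda$-couplings in $\Delta_{C,t}$. The two advertised types of generators correspond to whether the leading Witten term sits in the top slot of $\sigma=(\eta_k,\xi_{k-1})\in \tC^k(\om)$ (which requires $n_f(p)=k$) or in the bottom slot (which requires $n_f(p)=k-1$).

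For the Type~I construction at $p$ with index $k$, I would begin from the ansatz $\sigma^{(0)}=(\zeta_k,0)$, which automatically satisfies conditions (a), (b), (c) of \eqref{dctcond}--\eqref{dsctcond} since $d_t\zeta_k=d_t^*\zeta_k=0$, but generically fails (d) because $\Lambda\zeta_k\neq 0$. I would then enlarge the ansatz to $\sigma=(\zeta_k+\eta_k',\xi_{k-1}')$ and solve the linear system (a)--(d) by a Hodge-theoretic iteration, applying the Green's operator $G_t$ of $\Delta_t$ to the successive residuals in degrees $k-1$ and $k$. The iteration is well-posed because the kernel of $\Delta_t$ at $p$ in degree $k-1$ is empty (so $G_t$ there is the full inverse of $\Delta_t$), while in degree $k$ the only residual is orthogonal to the one-dimensional kernel $\mathbb{R}\zeta_k$.

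For the Type~II construction at $q\in Crit(f)$ with index $k-1$, I would invoke the $*_C$-duality of Lemma~\ref{2.1}. Since $q$ has index $2n-k+1$ with respect to $-f$, applying the Type~I construction to $\Delta_{C,-t}$ in degree $2n+1-k$ produces a harmonic solution at $q$ whose top-slot leading term is $\zeta_{2n-k+1}^{(-f)}$. Then $*_C$ transports this to a $\Delta_{C,t}$-harmonic solution in $\tC^k(\om)$ whose leading piece is, up to sign, $\zeta_{k-1}^{(f)}$ in the bottom slot, localized at $q$. The uniqueness part of the statement follows because any local harmonic mode of $\Delta_{C,t}$ must, to leading order in $1/t$, lie in $\ker\Delta_t$ slot-wise; this kernel is one-dimensional at a critical point in the matching degree and trivial in all other degrees by Lemma~\ref{WittenH}.

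The main obstacle will be establishing convergence of the perturbative correction series uniformly in $t$, for $t$ sufficiently large. This requires spectral estimates on $G_t$ exploiting that the first nonzero eigenvalue of $\Delta_t$ in the relevant degree grows linearly in $t$, together with a verification that the $\omega$- and $\Lambda$-couplings, though appearing with factors up to order $t$ inside $d_t^\Lambda$ and $d_t^{\Lambda*}$, cannot overwhelm this spectral gap since they act across slots of different degrees and are partially damped by the Gaussian localizing factor $e^{-t|x|^2/2}$.
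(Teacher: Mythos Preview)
Your proposal takes a genuinely different, and considerably harder, route than the paper. The essential idea you miss is that in the \emph{compatible coordinate chart} of Definition~\ref{Compcoord} --- simultaneously Darboux, normal, and Morse --- the local harmonic generators can be written down in closed form, with no perturbative iteration at all. Concretely, since $\omega=\sum_{i=1}^n dx_i\wedge dx_{i+n}$ and $\zeta_k=e^{-t|x|^2/2}dx_1\wedge\cdots\wedge dx_k$, one has $\Lambda\zeta_k=0$ whenever $k\le n$: the form $\zeta_k$ contains no $dx_i\wedge dx_{i+n}$ pair. So your claim that condition~(d) ``generically fails because $\Lambda\zeta_k\neq 0$'' is precisely what the compatible chart eliminates, and $(\zeta_k,0)$ is already an exact harmonic solution for $k\le n$. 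Likewise, at an index $k-1$ point (still $k\le n$) the paper exhibits an explicit primitive $\tau$ with $d\tau=\omega$ and checks directly that $(-\tau\wedge\zeta_{k-1},\,\zeta_{k-1})$ satisfies all four equations. The range $k\ge n+1$ is then obtained by $*_C$-duality from these explicit $k\le n$ solutions --- the same mechanism you invoke for Type~II, but applied to both generators at once.

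The convergence problem you flag as ``the main obstacle'' is therefore entirely artificial: it arises only because you did not exploit the Darboux structure of the chart. Worse, as stated your iteration is not obviously convergent: the off-diagonal coupling $d_t^\Lambda$ contains a zeroth-order piece $t(df\,\Lambda-\Lambda\,df)$ of size $O(t)$, while $G_t$ on the complement of $\ker\Delta_t$ has norm $O(1/t)$, so the contraction constant is a priori only $O(1)$. Making this scheme converge would require a finer analysis (e.g.\ tracking how the coupling acts on Hermite eigenspaces), which is exactly the kind of estimate carried out \emph{after} this proposition in the paper to bound the nonzero spectrum --- not something you should need just to exhibit the harmonic generators. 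Finally, your uniqueness sketch (``to leading order in $1/t$ the solution lies in $\ker\Delta_t$ slotwise'') is not how the paper proceeds either; uniqueness is obtained separately via the eigenvalue lower bound in the next theorem.
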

\begin{proof}
The deformed cone harmonic solutions must satisfy the four conditions in \eqref{dctcond}-\eqref{dsctcond}.
We will describe the solutions first in the case for cone forms of degree $k\leq n$ and then for the case of degree $k\geq n+1$.  

\medskip

\noindent {\bf Case (I):   $0\leq k\leq n$} 

When $k\leq n$, the harmonic solutions of $\Delta_{C,t}$ localized around critical points $p\in Crit(f)$ with index $n_f(p)=k$ are the standard type in \eqref{Wsol}.  Expressed in compatible coordinates they are generated by 
\begin{align}\label{kln1}
\begin{pmatrix} \eta_k \\ \xi_{k-1} \end{pmatrix}=\begin{pmatrix} \zeta_{k} \\ 0 \end{pmatrix}=\begin{pmatrix} e^{-t|x|^2/2}dx_1\wedge...\wedge dx_{k} \\0 \end{pmatrix}\,.
\end{align}
Since $d_t\,\zeta_{k}=d_t^*\,\zeta_{k}=0$, it remains only to check that $\Lambda\, \zeta_{k} = 0$.  For this, we note that in a compatible coordinate chart, $\Lambda$ takes the simple form
\begin{align}\label{Llocal}
\Lambda=\sum_{i=1}^{n} \iota_{\partial_{x_{i+n}}}\iota_{\partial_{x_i}}\,.
\end{align}
Clearly then, we have $\Lambda\, \zeta_{k} = 0$ as long as  $k\leq n\,$.

We describe now the localized harmonic solutions of $\Delta_{C,t}$ around critical points $q\in Crit(f)$  with index $n_f(p)=k-1$.  In the compatible coordinate chart, the Morse function takes the form
\begin{align}\label{fkmo}
f(x)=f(q)+\frac{1} {2} ( -x_1^2-x_2^2-...-x_{k-1}^2+x_k^2+...+x_{2n}^2)\,,
\end{align}  
and we also introduce a local one form
\begin{align*}
\tau =\displaystyle \sum_{i=1}^{k-1} -x_{i+n}dx_i+\frac{1} {2} \sum_{i=k}^n (x_{i}dx_{i+n}-x_{i+n}dx_i)\,,
\end{align*}
with the property $d\tau=\omega$. Then the following generates harmonic solutions around critical points $q$ with index $n_f(q)=k-1$:
\begin{align}\label{kln2}
\begin{pmatrix} \eta_k \\ \xi_{k-1} \end{pmatrix}=\begin{pmatrix} -\tau \wedge\zeta_{k-1} \\ \zeta_{k-1} \end{pmatrix}=\begin{pmatrix}
\frac{1}{2} e^{-t |x|^2/2} \displaystyle\sum_{i=k}^n \left(x_{i+n} dx_i- x_i  dx_{i+n} \right) \wedge dx_1 \wedge \ldots \wedge dx_{k-1} \\ e^{-t|x|^2/2}dx_1\wedge\ldots\wedge dx_{k-1} \end{pmatrix}.
\end{align}
Let us check conditions $(a)$-$(d)$ in \eqref{dctcond}-\eqref{dsctcond}.
For (a), it follows by direct computation that 
\begin{align*} 
d_t\eta_k=d_t(-\tau \wedge \zeta_{k-1}) 
 =-d\tau \wedge \zeta_{k-1}+\tau \wedge d_t\zeta_{k-1} 
 =-\omega \wedge \xi_{k-1}\,.
\end{align*}
Condition (b) is trivially satisfied since $d_t\zeta_{k-1}=\,0$. For (c), we use the expression $d^*_t=e^{tf} d^* e^{-tf}$ from \eqref{dtdts} and compute also making use of \eqref{fkmo}: 
\begin{align*}
d_t^*\eta_k&=e^{tf}d^*(e^{-tf} \eta_k) \\
&=\frac{1}{2}e^{tf}d^*\left(e^{-tf}e^{-t |x|^2/2} \sum_{i=k}^n \left( x_{i+n} dx_i-x_i  dx_{i+n}\right) \wedge dx_1 \wedge \ldots \wedge dx_{k-1} \right) \\
&=\frac{1}{2}e^{t(f-f(q))}d^*\left(\exp\Big(-t \sum_{\ell=k}^{2n} x_\ell^2\Big)
\sum_{i=k}^{n} \left(x_{i+n} dx_i-x_i dx_{i+n} \right) \wedge dx_1 \wedge \ldots \wedge dx_{k-1} \right)\\
&=e^{t(f-f(q))}\exp\Big(-t \sum_{\ell=k}^nx_\ell^2\Big)\left(t\sum_{i=k}^n\left( x_ix_{i+n}-x_{i+n}x_i\right)\right) dx_1 \wedge \ldots \wedge dx_{k-1}\ =0\,.
\end{align*} 
And for condition (d), since $d^*_t \xi_{k-1}= d^*_t\zeta_{k-1}=0\,$, we only need to check that $\Lambda \eta_k =0$, which follows from using the expression of $\Lambda$ in \eqref{Llocal}.

\medskip

\noindent{\bf Case (II): $n+1\leq k\leq 2n+1$}

Using Lemma \ref{2.1}, the generators of the harmonic solutions for $k \geq n+1$ can be straightforwardly obtained by applying $*_C$ to the generators found for $k\leq n$ in Case (I).  Specifically, let $\tsigma_j \in \tC^j(\om)$ with $j\leq n$ be a harmonic solution of the deformed cone Laplacian with respect to a Morse function $(-f)$, i.e. $d_{C,-t}\tsigma_j =d_{C,-t}^*\tsigma_j=0\,$. Then, by Lemma \ref{2.1},  $*_C\tsigma_j\in\tC^{k}(\om)$ with degree $k=2n+1-j\geq n+1$ is a harmonic solution with respect to $f$, i.e.  $d_{C,t}(*_C\tsigma_j)  =d_{C,t}^*(*_C\tsigma_j)=0\,$. Thus, if we take $\tsigma_j$ to be the two types of local harmonic generators  \eqref{kln1} and \eqref{kln2} found for cone forms of degree $j\leq n\,$ with respect to a Morse function which we label by $-f$, then  applying $*_C$ to them would result in two types of harmonic generators for cone forms of degree $k=2n+1-j$.  And since $j=0, 1,\ldots, n$, this means $k= n+1, \dots, 2n+1,$ as desired. 

Let us comment about the minus sign difference in the Morse function for $\tsigma_j$, which is associated with $(-f)$, versus $\sigma_k=*_C\,\tsigma_j\,$, associated with $f$.  Notice first that a critical point $p\in Crit(-f)$ of index $n_{-f}(p)$ would remain a critical point $p\in Crit(f)$ but with index $n_{f}(p)=2n-n_{-f}(p)$.  For in the compatible coordinate chart, we have
\begin{align*}
f&= f(p)+\frac{1}{2}(-x_1^2-\ldots -x_{n_f(p)}^2+x_{n_f(p)+1}^2 + \ldots +  x_{2n}^2)\\
-f &= -f(p)+\frac{1}{2}(x_1^2+\ldots +x_{n_f(p)}^2-x_{n_f(p)+1}^2 - \ldots - x_{2n}^2)\\
&= -f(p) + \frac{1}{2}(-\tx_1^2 -\ldots-\tx_{n_{-f}(p)}^2 + \tx_{n_{-f}(p)+1}^2 + \ldots + \tx^2_{2n})
\end{align*}
where in the last line, we have applied a change of coordinates $x_i \to \pm\,\tx_{2n+1-i}$ to return to compatible coordinate chart.  (We note that the constant $f(p)$ is inconsequential in our discussion here since the deformed harmonic differentials $(d_{C,t}, d^*_{C,t})$, given in \eqref{dcdcs}, do not depend on it.) 
In particular, the two types of harmonic generators localized around index $j$ critical points \eqref{kln1}
and around index $j-1$  critical points \eqref{kln2}
of $(-f)$ will still be localized at the same critical points after applying $*_C$.  But the index of the critical points would become $2n-j=k-1$ and $2n-(j-1)=k$, respectively, defined with respect to $f$.

We will write down explicitly the harmonic generators obtained from applying the $*_C$ map.  
Applying $*_C$ first to the generator in \eqref{kln1}, we obtain the local harmonic solutions about any critical point $q$ of index $n_f(q)=k-1$: 
\begin{align}\label{kgn1}
\begin{pmatrix} \eta_k \\ \xi_{k-1} \end{pmatrix}=\begin{pmatrix} 0 \\ \zeta_{k-1} \end{pmatrix}=\begin{pmatrix} 0 \\ e^{-t|x|^2/2}dx_1\wedge\ldots\wedge dx_{k-1} \end{pmatrix}.
\end{align}
Applying $*_C$ to the generator given in \eqref{kln2}, we obtain a second localized generator this time around any critical point $p$ of index $n_f(p)=k$:
\begin{align}\label{kgn2}
\begin{pmatrix} \eta_k \\ \xi_{k-1} \end{pmatrix}&=\begin{pmatrix} \zeta_{k} \\ \iota_Z\zeta_{k} \end{pmatrix}
=\begin{pmatrix} e^{-t|x|^2/2} dx_1\wedge\ldots\wedge dx_{k} \\  \frac{1}{2} e^{-t|x|^2/2}\dfrac{\omega^{r-1}} {(r-1)!}\wedge\displaystyle\sum_{i=1}^{r} \left(x_idx_i +x_{n+i}dx_{n+i}\right)\wedge dx_{r+1} \wedge \ldots \wedge dx_{n} \end{pmatrix},
\end{align}
where $r=k-n$.  In \eqref{kgn2}, we have noted that the second component can be expressed in terms of an interior product by a vector which we have denoted by $Z$.  To define this vector, note that 
by \eqref{Cstar} and \eqref{kln2}, the second component is given by
\begin{align*}
\xi_{k-1}&=(-1)^k*(-\tau \wedge e^{-t|x|^2/2}dx_{2n}\,\wedge\ldots\wedge dx_{2n+1-k})\\
&=\iota_Z\left(e^{-t|x|^2/2}dx_1\wedge\ldots \wedge dx_{k-1}\right)
\end{align*}
where $Z=-\tau^{\prime \sharp}$ is the musical isomorphism applied to the one form 
\begin{align*}\tau^\prime&=\displaystyle\sum_{i=0}^{2n-k}x_{n-i}dx_{2n-i}-\frac{1} {2}\sum_{i=2n-k+1}^{n}x_{2n-i}dx_{n-i}-x_{n-i}dx_{2n-i} \\
\text{so } Z &= \sum_{i=0}^{2n-k}- x_{n-i}\frac{\partial} {\partial x_{2n-i}}+\frac{1} {2}  \sum_{i=2n-k+1}^{n} x_{2n-i}\frac{\partial} {\partial x_{n-i}}-x_{n-i}\frac{\partial} {\partial x_{2n-i}}
\end{align*}
where $\tau^\prime$ is $\tau$ under the symplectomorphism that maps $x_{2n+1-i} \to \pm\,x_i$ (to account for the sign change from $-f$ to $f$). 

Finally, let us add that it can be checked directly that the above two harmonic generators for $k\geq n+1$ \eqref{kgn1}-\eqref{kgn2} satisfy the four deformed harmonic conditions (a)-(d) in \eqref{dctcond}-\eqref{dsctcond} in a compatible coordinate chart. 
\end{proof}

Proposition \ref{prop:harmonicsmallk} thus tells us that for each cone degree $k$, there exist at least two types of generators of local harmonic solutions of the deformed cone Laplacian.  (In the next subsection, we will show that any other local eigenforms will have a non-zero eigenvalue and hence not harmonic.)  Since the two types of harmonic generators are localized at critical points of index $k$ and $k-1$, this implies each critical point $p\in Crit(f)$ of index $n_f(p)$ support two harmonic generators of cone degree $n_f(p)$ and $n_f(p)+1$.  Below, we express this observation as a corollary.

\begin{corollary}\label{gen2sol}
Let $f:M \to \mathbb{R}$ be a Morse function on a symplectic manifold $(M^{2n}, \omega)$. Then in the compatible coordinate chart around $p\in Crit(f)$ with index $n_f(p)=k$, where
\begin{align*}
\omega = \sum_{i=1}^n dx_i \wedge dx_{i+n}\,,\qquad
g_{ij}=\delta_{ij}\,,\qquad 
f=f(p) + \frac{1}{2}\left(-x^2_1 - \ldots - x^2_k +x^2_{k+1}+\ldots x^2_{2n}\right)\,,
\end{align*}
we have the following two generators of harmonic solutions to the deformed cone Laplacian:
\begin{align}\label{hsatk}
\begin{pmatrix} \zeta_{k} \\ \iota_Z\zeta_{k} \end{pmatrix}\in \tC^k(\om)\,, \qquad
\begin{pmatrix} -\tau \wedge \zeta_{k} \\ \zeta_{k}\end{pmatrix}\in \tC^{k+1}(\om)\,,
\end{align}
where $\zeta_k = e^{-t|x|^2/2} dx_1 \w\ldots\w dx_k$, and 
\begin{align} 
\tau&=
\displaystyle\sum_{i=1}^{n_f(p)}-x_{i+n}dx_i+\frac{1} {2} \sum_{i=n_f(p)+1}^{n}x_idx_{i+n}-x_{i+n}dx_i\,, \quad 0\leq n_f(p) \leq n-1
 \label{taudef}\\
\iota_{Z}&=
\displaystyle\sum_{i=0}^{2n-n_f(p)} -x_{n-i}\iota_{\partial_{2n-i}}+\frac{1} {2} \sum_{i=2n-n_f(p)+1}^n x_{2n-i}\iota_{\partial_{n-i}}-x_{n-1}\iota_{\partial_{2n-i}}\,,\quad n+1\leq n_f(p)\leq 2n
\label{izdef}
\end{align}
and defining $\tau=0$ when $n_f(p)\geq n\,$, and $\iota_Z=0$ when $n_f(p)\leq n$.
\end{corollary}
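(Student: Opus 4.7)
The plan is to deduce Corollary \ref{gen2sol} directly from Proposition \ref{prop:harmonicsmallk} by observing that the two harmonic generators attached to a single critical point $p$ of index $n_f(p)=k$ are obtained by combining, degree by degree, the four explicit generators \eqref{kln1}, \eqref{kln2}, \eqref{kgn1}, \eqref{kgn2}. No new analytic input is needed: the proposition already furnishes two generators at each critical point (one of cone degree equal to its index, one of cone degree one higher), and the corollary merely repackages them with unified notation using $\tau$ and $\iota_Z$.

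First I would treat the cone degree $k=n_f(p)$ generator. When $n_f(p)\leq n$, this is the Witten-type solution \eqref{kln1}, namely $(\zeta_k,0)$; since the corollary sets $\iota_Z=0$ in this range, it agrees with $\begin{pmatrix}\zeta_k\\ \iota_Z\zeta_k\end{pmatrix}$. When $n_f(p)\geq n+1$, the relevant solution is \eqref{kgn2}, which is already of the form $\begin{pmatrix}\zeta_k\\ \iota_Z\zeta_k\end{pmatrix}$; I would verify that the expression \eqref{izdef} reproduces the vector field $Z$ written in the proof of Case (II) of Proposition \ref{prop:harmonicsmallk}, modulo the reindexing $i\leftrightarrow 2n+1-i$ that was introduced there to bring the Morse chart of $-f$ into compatible form.

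Next I would handle the cone degree $k+1$ generator at the same critical point $p$. When $k+1\leq n$, i.e.\ $n_f(p)\leq n-1$, this is \eqref{kln2} (applied with the role $k\mapsto k+1$, so that the underlying index is $k$), which reads $\begin{pmatrix}-\tau\wedge\zeta_k\\ \zeta_k\end{pmatrix}$; I would confirm that the one-form $\tau$ from the proposition matches \eqref{taudef}. When $k+1\geq n+1$, i.e.\ $n_f(p)\geq n$, the generator is \eqref{kgn1}, namely $\begin{pmatrix}0\\ \zeta_k\end{pmatrix}$, which agrees with $\begin{pmatrix}-\tau\wedge\zeta_k\\ \zeta_k\end{pmatrix}$ under the convention $\tau=0$. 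The only subtlety, and the main piece of bookkeeping, is the boundary case $n_f(p)=n$, where both formulas must be checked to give the same object; here $\tau$ becomes a half-sum of $x_i\,dx_{i+n}-x_{i+n}\,dx_i$ that wedges to zero against $\zeta_k=e^{-t|x|^2/2}dx_1\wedge\cdots\wedge dx_n$, so both conventions are consistent.

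I expect no real obstacle beyond the index bookkeeping. The hard work is already absorbed in Proposition \ref{prop:harmonicsmallk} and Lemma \ref{2.1}; the corollary just records the two-per-critical-point count in a form that will be convenient for subsequent spectral gap and Morse inequality arguments.
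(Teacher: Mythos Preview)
Your proposal is correct and follows exactly the same approach as the paper: the corollary is presented there without separate proof, as an immediate repackaging of Proposition~\ref{prop:harmonicsmallk} from the viewpoint of a fixed critical point rather than a fixed cone degree. Your case analysis---matching \eqref{kln1}, \eqref{kgn2} to the degree-$k$ generator and \eqref{kln2}, \eqref{kgn1} to the degree-$(k+1)$ generator, together with the boundary check at $n_f(p)=n$---is precisely the bookkeeping the paper leaves implicit.
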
 

\subsection{Bounding eigenvalues of local eigenforms of the deformed cone Laplacian}
Let $\begin{pmatrix} \eta_k \\ \xi_{k-1} \end{pmatrix}$ be any local eigenform of $\Delta_{C,t}$ with eigenvalue $\lambda$ and not generated by the harmonic generators of the previous subsection. We wish to show that $\lambda>c\,t$ for some positive, non-zero constant $c$. Therefore, as $t$ goes to infinity, $\lambda$ grows to infinity at least of the order $c\,t$. 
\begin{theorem}
On a compatible local coordinate chart near a critical point $p$ of a Morse function $f$, suppose $\begin{pmatrix} \eta_k \\ \xi_{k-1} \end{pmatrix}$ is orthogonal to the local harmonic solutions generated by \eqref{hsatk}. Then the following inequality holds,  \begin{equation}\label{ineq:DeltatC}
  \left\langle \Delta_{C,t}\begin{pmatrix} \eta_k \\ \xi_{k-1}\end{pmatrix}, \begin{pmatrix} \eta_k \\ \xi_{k-1}\end{pmatrix} \right\rangle \geq c\,t\left|\begin{vmatrix}  \eta_k \\ \xi_{k-1}\end{vmatrix}\right|^2+O(\|\eta_k\|^2, \|\xi_{k-1}\|^2)\,,
\end{equation}
where $c$ is a positive constant.  In particular, the eigenvalue of any local non-harmonic eigenform is greater than $c'\,t$ for some fixed constant $c'>0\,$. 
\end{theorem}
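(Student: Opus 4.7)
The plan is to bound $\langle \Delta_{C,t}\sigma, \sigma\rangle = \|d_{C,t}\sigma\|^2 + \|d_{C,t}^*\sigma\|^2$ from below by expanding it componentwise, applying Witten's classical spectral gap on each ordinary Laplacian $\Delta_t$, and absorbing the off-diagonal cross term using the confining potential $t^2|df|^2$ already inside $\Delta_t$. A direct computation from the matrix form in \eqref{dcdcs} yields
\begin{align*}
\langle \Delta_{C,t}\sigma, \sigma\rangle = \langle \Delta_t \eta_k, \eta_k\rangle + \langle \Delta_t \xi_{k-1}, \xi_{k-1}\rangle + \|\om\w\xi_{k-1}\|^2 + \|\Lambda\eta_k\|^2 - 2\langle d_t^\Lambda \eta_k, \xi_{k-1}\rangle,
\end{align*}
where $d_t^\Lambda = d_t\Lambda - \Lambda d_t = d^\Lambda + t\,[df\w, \Lambda]$, and I discard the two nonnegative middle terms for the lower bound.

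Next, for each diagonal term I invoke Witten's local spectral gap: near $p\in Crit(f)$ of index $j$, a $j'$-form $\alpha$ that is $L^2$-orthogonal to the local harmonic $\zeta_{j'}$ (which only exists when $j' = j$) satisfies $\langle \Delta_t\alpha, \alpha\rangle \geq c_1 t\|\alpha\|^2 - C\|\alpha\|^2$. I then convert the orthogonality of $\sigma$ to the cone harmonics \eqref{hsatk} into the required orthogonality for $\eta_k$ and $\xi_{k-1}$ individually, working first in the range $k \leq n$; the range $k\geq n+1$ is handled by Lemma \ref{2.2}, which identifies eigenforms of $\Delta_{C,t}$ in degree $k$ with those of $\Delta_{C,-t}$ in degree $2n+1-k$ via $*_C$. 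Three subcases arise in $j = n_f(p)$: (i) when $j\notin\{k-1,k\}$, no cone harmonic exists at $p$ in degree $k$ and Witten's gap applies directly to both components; (ii) when $j = k$, the generator reduces to $(\zeta_k, 0)$ since $\iota_Z = 0$ for $k\leq n$, forcing $\eta_k \perp \zeta_k$ with no $\zeta_{k-1}$ at $p$; (iii) when $j = k-1$, the generator is $(-\tau\w\zeta_{k-1}, \zeta_{k-1})$, yielding the coupled constraint $\langle \xi_{k-1}, \zeta_{k-1}\rangle = \langle \eta_k, \tau\w\zeta_{k-1}\rangle$. In subcase (iii), decompose $\xi_{k-1} = \beta\,\zeta_{k-1}/\|\zeta_{k-1}\| + \xi_{k-1}^\perp$; using that $\tau$ vanishes to first order at $p$ (so $|\tau(x)| = O(|x|)$) while $\zeta_{k-1}$ is Gaussian-localized at scale $t^{-1/2}$, a direct Gaussian integral gives $\|\tau\w\zeta_{k-1}\|/\|\zeta_{k-1}\| = O(t^{-1/2})$, hence $|\beta| = O(t^{-1/2})\|\eta_k\|$ and $\|\xi_{k-1}^\parallel\|^2 = O(t^{-1})\|\eta_k\|^2$. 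Since $\Delta_t \xi_{k-1}^\parallel = 0$, Witten's gap on $\xi_{k-1}^\perp$ gives $\langle \Delta_t\xi_{k-1}, \xi_{k-1}\rangle \geq c_1 t\|\xi_{k-1}\|^2 - C\|\eta_k\|^2 - C\|\xi_{k-1}\|^2$, while $\eta_k$ is automatically orthogonal to the nonexistent $\zeta_k$ at an index $k-1$ point, so $\langle \Delta_t\eta_k, \eta_k\rangle \geq c_1 t\|\eta_k\|^2 - C\|\eta_k\|^2$.

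Finally, the cross term $-2\langle d_t^\Lambda \eta_k, \xi_{k-1}\rangle$ is absorbed via Peter--Paul. The bounded piece $d^\Lambda$ contributes $O(\|\sigma\|^2)$. For $t[df\w,\Lambda]$, the pointwise bound $|[df\w,\Lambda]|\leq C|df|$ combined with Cauchy--Schwarz gives, for any $\varepsilon > 0$,
\begin{align*}
2\bigl|\langle t[df\w,\Lambda]\eta_k, \xi_{k-1}\rangle\bigr| \leq \varepsilon\, t^2 \!\int|df|^2|\eta_k|^2\, dx + C\varepsilon^{-1}\|\xi_{k-1}\|^2,
\end{align*}
and the first summand is dominated by $\varepsilon\langle \Delta_t\eta_k,\eta_k\rangle + \varepsilon Ct\|\eta_k\|^2$ using the standard inequality $\langle \Delta_t\eta, \eta\rangle \geq t^2\!\int|df|^2|\eta|^2 - Ct\|\eta\|^2$ coming from the expansion of $\Delta_t$. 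Choosing $\varepsilon$ small compared to $c_1$ combines all estimates into $\langle\Delta_{C,t}\sigma, \sigma\rangle \geq c\,t\,\|\sigma\|^2 + O(\|\sigma\|^2)$, as claimed. The main obstacle is subcase (iii): the coupled cone harmonic $(-\tau\w\zeta_{k-1}, \zeta_{k-1})$ does not directly force $\xi_{k-1}\perp\zeta_{k-1}$, and it is precisely the quantitative $O(t^{-1/2})$ suppression coming from the first-order vanishing of $\tau$ and the Gaussian localization of $\zeta_{k-1}$ that lets the $ct$-spectral gap on $\xi_{k-1}^\perp$ propagate to all of $\xi_{k-1}$; the remaining steps are routine Witten-deformation estimates.
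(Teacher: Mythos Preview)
Your overall strategy is sound and genuinely different from the paper's. The paper expands $\eta_k$ and $\xi_{k-1}$ in the explicit Hermite--Gaussian eigenbasis of $\Delta_t$, computes $d_t^\Lambda$ term by term, counts that each simple form $\eta^\alpha$ couples to at most $12n^2$ simple forms $\xi^\beta$, and chooses the Peter--Paul constant $C=12^2n^3$ by hand. Your approach is basis-free: you use Witten's gap as a black box and absorb the cross term directly via the confining potential $t^2|df|^2$. Your treatment of subcase (iii) is also cleaner than the paper's: rather than computing $d_t^\Lambda\eta^{\alpha'}$ explicitly, you observe that $\tau$ vanishes to first order so $\|\tau\wedge\zeta_{k-1}\|/\|\zeta_{k-1}\|=O(t^{-1/2})$, which forces the dangerous parallel component of $\xi_{k-1}$ to be $O(t^{-1/2})\|\eta_k\|$ and hence costs only $O(1)\|\eta_k\|^2$ against the $ct\|\xi\|^2$ gap.

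There is, however, one genuine gap. You write ``The bounded piece $d^\Lambda$ contributes $O(\|\sigma\|^2)$,'' but $d^\Lambda=d\Lambda-\Lambda d$ is a \emph{first-order} differential operator, not a bounded one: in the compatible chart $\Lambda=\sum_i\iota_{\partial_{i+n}}\iota_{\partial_i}$ has constant coefficients, so $[d,\Lambda]=\sum_i(\iota_{\partial_i}\partial_{i+n}-\iota_{\partial_{i+n}}\partial_i)$, which involves derivatives (indeed, by the K\"ahler identities in the flat local model it equals $\pm(d^c)^*$). Thus $\langle d^\Lambda\eta_k,\xi_{k-1}\rangle$ cannot be bounded by $C\|\eta_k\|\|\xi_{k-1}\|$. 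The fix is the same trick you already use for the $t[df\wedge,\Lambda]$ piece: by Peter--Paul, $|\langle d^\Lambda\eta_k,\xi_{k-1}\rangle|\leq \varepsilon\|\nabla\eta_k\|^2+C_\varepsilon\|\xi_{k-1}\|^2$, and from the expansion $\langle\Delta_t\eta_k,\eta_k\rangle=\|\nabla\eta_k\|^2+t\langle(\mathcal L_{\nabla f}+\mathcal L_{\nabla f}^*)\eta_k,\eta_k\rangle+t^2\!\int|df|^2|\eta_k|^2$ with the middle term zeroth-order, one gets $\|\nabla\eta_k\|^2\leq\langle\Delta_t\eta_k,\eta_k\rangle+Ct\|\eta_k\|^2$. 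For $\varepsilon$ small this is absorbed into a fraction of $\langle\Delta_t\eta_k,\eta_k\rangle$ plus $\varepsilon Ct\|\eta_k\|^2$, which in turn is absorbed by the $c_1t\|\eta_k\|^2$ from Witten's gap. With this correction your argument goes through.
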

\begin{proof}
We start with the following computation,
\begin{align}
&\left\langle \Delta_{C,t}
\begin{pmatrix} \eta_k \\ \xi_{k-1}\end{pmatrix}, 
\begin{pmatrix} \eta_k \\ \xi_{k-1}\end{pmatrix} \right\rangle =\left\langle 
\begin{pmatrix} \Delta_{t}+\omega \Lambda & -d_t^{\Lambda*} \\ -d_t^{\Lambda} & \Delta_{t}+\Lambda\omega  \end{pmatrix}
\begin{pmatrix} \eta_k \\ \xi_{k-1}\end{pmatrix}, 
\begin{pmatrix} \eta_k \\ \xi_{k-1}\end{pmatrix}
\right\rangle \nonumber\\
  &=\left\langle \begin{pmatrix} \Delta_{t}\eta_k+\omega \Lambda\eta_k -d_t^{\Lambda*}\xi_{k-1} \\ -d_t^{\Lambda}\eta_k+ \Delta_{t}\xi_{k-1}+\Lambda\omega \xi_{k-1}  \end{pmatrix}, \begin{pmatrix} \eta_k \\ \xi_{k-1}\end{pmatrix} \right\rangle \nonumber\\
  &=\langle \Delta_{t}\eta_k, \eta_k \rangle \!+\!\langle \omega \Lambda\eta_k, \eta_k \rangle \!-\!\langle d_t^{\Lambda*}\xi_{k-1}, \eta_k \rangle
  \!+\! \langle\Delta_{t}\xi_{k-1}, \xi_{k-1} \rangle \!-\!\langle d_t^{\Lambda}\eta_k, \xi_{k-1} \rangle\!+\!\langle \Lambda\omega \xi_{k-1} ,  \xi_{k-1} \rangle \nonumber\\
  &=\langle \Delta_{t}\eta_k, \eta_k \rangle  -2\langle d_t^{\Lambda}\eta_k, \xi_{k-1}\rangle+ \langle\Delta_{t}\xi_{k-1}, \xi_{k-1} \rangle
  +\|\Lambda\eta_k\|^2+\|\omega \xi_{k-1}\|^2. \label{eq:DeltatCformula}
\end{align}
  
Note that $\Delta_{t}$ is self-adjoint and has a basis of orthogonal eigenforms of the form $ \left(\displaystyle\prod_j H_{m_j}(\sqrt{t}x_j)\right)e^{-t|x|^2/2}dx_{I_k}$ where $H_{m_j}$ is the $m_j$-th Hermite polynomial. The corresponding eigenvalues are $2t(\ell+\sum m_j)=2t(\ell+m)$ where $\ell$ is the number of missing/different coordinates in $dx_{I_k}=dx_{i_1}\wedge...\wedge dx_{i_k}$ from harmonic form's $dx_1\wedge...\wedge dx_{n_f}$ and $\sum m_j =m$ is the sum of the Hermite polynomial numbers. Imposing the $L^2$-norm condition on $\eta_k$ and $\xi_{k-1}$, we can express them as linear combinations of these eigenforms.  For instance, we shall write  
\begin{align}
   \eta_k=\sum_{\alpha}\eta^\alpha =\sum _\alpha a_\alpha\prod_j  H_{m_j}(\sqrt{t}x_j)e^{-t|x|^2/2} dx_{I_k}\,, \label{etaexp}
\end{align}
where the index $\alpha$ denotes a certain combination of $\{I_k, \{m_j\}\}$, and similarly,  
\begin{align}
   \xi_{k-1}=\sum_\beta \xi^\beta =\sum _\beta b_\beta\prod_jH_{m_j}(\sqrt{t}x_j)e^{-t|x|^2/2}dx_{I_{k-1}}\,, \label{xiexp}
\end{align}
with the degree of the form adjusted accordingly.  Furthermore, we will refer to all  
$\eta^\alpha$'s and $\xi^\beta$'s eigenforms
as simple forms. 

Below, we will prove the theorem assuming  $k \leq n$. For $k > n$, we can use the observation of Lemma \ref{2.2} that $\Delta_{C,-t}=*_C\,\Delta_{C,t} \,*_C$.  Under $f \to -f$, we have the relation $n_f(p) = 2n -n_{-f}(p)$  and so a similar proof would follow. 

As  $\Delta_{C,t}$ is self-adjoint, we have that if $\begin{pmatrix} \eta_k \\ \xi_{k-1} \end{pmatrix}$ is orthogonal to both $ \begin{pmatrix} \zeta_k \\ 0 \end{pmatrix}$ and $ \begin{pmatrix}  -\tau \wedge \zeta_{k-1} \\ \zeta_{k-1}  \end{pmatrix}$, $$\left\langle \begin{pmatrix} \eta_k \\ \xi_{k-1} \end{pmatrix},  \begin{pmatrix} \zeta_k \\ 0 \end{pmatrix}\right\rangle=0=\left\langle\begin{pmatrix} \eta_k \\ \xi_{k-1} \end{pmatrix}, \begin{pmatrix} -\tau \wedge \zeta_{k-1} \\ \zeta_{k-1} \end{pmatrix}  \right\rangle$$ where $\zeta_{k},\zeta_{k-1}$ are solutions of $\Delta_{t}$. We notice that $\eta_k$ cannot have a nonzero component in the kernel of $\Delta_{t}$, as then it would not be orthogonal to $\begin{pmatrix} \zeta_{k} \\ 0\end{pmatrix}$. Note that $\xi_{k-1}$ may have a component $b_\beta\zeta_{k-1}$ in the expansion of \eqref{xiexp}.

If $\xi$ has a component of $\xi^\beta=b_\beta\zeta_{k-1}$, then we must have
\begin{align*}
0&=\left\langle\begin{pmatrix} \eta_k \\ \xi_{k-1}\end{pmatrix}, \begin{pmatrix} -\tau \wedge \zeta_{k-1} \\  \zeta_{k-1} \end{pmatrix}\right\rangle 
 \\ &
= \langle \eta_k, -\tau \wedge \zeta_{k-1}\rangle+b_\beta\left\langle \zeta_{k-1} ,\zeta_{k-1} \right\rangle 
=\langle \eta_k, -\tau \wedge \zeta_{k-1}\rangle+b_\beta\|\zeta_{k-1}\|^2. 
\end{align*}
Thus, there must be  special components of $\eta$ in the expansion \eqref{etaexp}, which we will label together by $\eta^{\alpha'}$, taking the following form  
\begin{align}\label{ortoHarm} 
\eta^{\alpha^\prime}=\sum_j(a_{j}x_{n+j}dx_j-a_{n+j}x_jdx_{n+j})\wedge\zeta_{k-1}
\textbf{ and } b_\beta\|\zeta_{k-1}\|^2=-\langle \eta^{\alpha^\prime}, -\tau \wedge \zeta_{k-1}\rangle.
\end{align} 
Note that by normalization, we have 
\begin{align*}
\|\zeta_{k-1}\|^2=\int e^{-t|x|^2}d{V\!ol}=\frac{\pi^{n}} {t^{n}}\,, 
\end{align*} 
\begin{align}
-\langle \eta^{\alpha^\prime},-\tau \wedge \zeta_{k-1} \rangle=-\int \sum \frac{a_j}{2} x_j^2e^{-t|x|^2}d{V\!ol} =-\frac{\pi^n\sum a_j} {4t^{n+1}}\,. \label{taunorm} 
\end{align}
Thus, 
\begin{align}
\label{ajsum}b_\beta =-\frac{\sum a_j} {4t}. 
\end{align}

Next we calculate $d^\Lambda_t\eta^{\alpha^\prime}=(d_t\Lambda-\Lambda d_t)\eta^{\alpha^\prime}$. Note that a particular form 
\[
\chi_j=x_{n+j}e^{-t|x|^2/2}dx_j\wedge dx_1\wedge...\wedge dx_{n_f}
\]
has no components containing the wedge pair $dx_r\wedge dx_{r+n}$ for any $1\leq r\leq n$, as $n_f\leq n$. Using the local form of $\Lambda$ given in \eqref{Llocal}, we have $\Lambda\chi_j=0$ and can compute $d^\Lambda_t\chi_j=(d_t\Lambda-\Lambda d_t)\chi_j$ as follows, 
  \begin{align*}
  (d_t\Lambda-\Lambda d_t)\chi_j&=-\Lambda d_t(t x_{n+j}dx_{j}\zeta_{k-1}) \\
  &=-\Lambda (d+tdf\wedge)( x_{n+j}dx_{j}\wedge\zeta_{k-1}) \\
  &=-\Lambda \Big(d(x_{n+j}dx_{j})\wedge \zeta_{k-1}-x_{n+j}dx_{j}\wedge d\zeta_{k-1}- x_{n+j}dx_{j}(tdf\wedge \zeta_{k-1})\Big) \\
  &=-\Lambda \Big(-dx_{j}\wedge dx_{n+j}\wedge \zeta_{k-1}-x_{j}dx_{n+j}\wedge d_t\zeta_{k-1}\Big) \\
  &=-\Lambda (-dx_{j}\wedge dx_{n+j}\wedge \zeta_{k-1}-x_{j}dx_{n+j}\wedge 0).
  \end{align*}
Hence, we have $d^\Lambda_t\chi_j =\Lambda (dx_{j}\wedge dx_{n+j}\zeta_{k-1})$. As $\zeta_{k-1}$ is primitive, this just gives $d^\Lambda_t\chi_j=\zeta_{k-1}$. A similar result for $\chi_{n+j}=-x_jdx_{n+j}\wedge \zeta_{k-1}$ also gives $d^\Lambda_t\chi_{n+j}=\zeta_{k-1}$. And adding all these terms together and using (\ref{ajsum}) gives 
\begin{align}
d_t^\Lambda \eta^{\alpha^\prime}=\sum a_j\zeta_{k-1}=-4t b_\beta\zeta_{k-1}. \label{dtLEtaA}\end{align} 
Plugging  the above formula into (\ref{eq:DeltatCformula}), we get 
\begin{align*}
  \left\langle \Delta_{C,t}\begin{pmatrix} \eta^{\alpha^\prime}
\\ b_\beta\zeta_{k-1} \end{pmatrix}, 
\begin{pmatrix} \eta^{\alpha^\prime} \\ b_\beta\zeta_{k-1}\end{pmatrix} \right\rangle&=\lambda_{\eta^\prime}\|\eta^{\alpha^\prime}\|^2-2\langle d_t^{\Lambda}\eta^{\alpha^\prime}, b_\beta\zeta_{k-1}\rangle+\|\Lambda \eta^{\alpha^\prime}\|^2+\|\omega b_\beta\zeta_{k-1}\|^2 \\
  &=\lambda_{\eta^\prime}\|\eta^{\alpha^\prime}\|^2-2\langle -4tb_\beta\zeta_{k-1}, b_\beta\zeta_{k-1}\rangle+\|\Lambda \eta^{\alpha^\prime}\|^2+\|\omega b_\beta\zeta_{k-1}\|^2 \\
&=\lambda_{\eta^\prime}\|\eta^{\alpha^\prime}\|^2+8t\|b_\beta\zeta_{k-1}\|^2+\|\Lambda \eta^{\alpha^\prime}\|^2+\|\omega b_\beta\zeta_{k-1}\|^2  
  \end{align*} 
Thus in this case, the inner product is  bounded by $c\,t$ with  $c=8$. We thus have the right estimate when $\zeta_{k-1}$ is harmonic and the specific $\eta^{\alpha^\prime}$ that belongs to the span of $x_j dx_{n+j}\wedge \zeta_{k-1}$ and $x_{n+j}dx_j\wedge \zeta_{k-1}$ for $j=1,..., n_f$.

Before we continue, we need to make some simple observations on the deformed Laplacian $\Delta_{t}^\Lambda=d_t^\Lambda d_t^{\Lambda*}+d_t^{\Lambda*} d_t^{\Lambda}$.
  
\noindent{\bf (i).} It is useful to express $d_t^\Lambda=d_t\Lambda-\Lambda d_t=*_s d_t *_s$ in terms of the symplectic star operator $*_s\,$, which is defined analogous to the Hodge star operator but with respect to the symplectic structure instead of the Riemannian metric (for a reference, see \cite{TY1}*{Sec. 2.1}).  And similarly, $d_t^{\Lambda*}=\omega d_t^*-d_t^*\omega=*_s d_t^* *_s$.
  Therefore, using that $*_s^2=\mathbb{I}$, we have 
 \begin{align*}
     \Delta^\Lambda_{t}&=d_t^\Lambda d_t^{\Lambda*}+d_t^{\Lambda*} d_t^{\Lambda} \\
     &=*_s d_t *_s*_s d_t^* *_s+*_s d_t^* *_s*_s d_t *_s \\
     &=*_s (d_t d_t^*+ d_t^* d_t) *_s \\
     &=*_s \Delta_{t} *_s
 \end{align*}
  Therefore, since $*_s=*_s^{-1}$, $\Delta^\Lambda_{t}=*_s\Delta_{t}*_s^{-1}$ has the eigenforms $*_s \displaystyle \prod_j H_{m_j}(\sqrt{t}x_j)e^{-t|x|^2/2}dx_{I_k}=\displaystyle \prod_jH_{m_j}(\sqrt{t}x_j)*_sdx_{I_k}$ with eigenvalues $2t(\ell_\Lambda+\sum m_j)$, where $\ell^\Lambda$ is the number of forms in $*_sdx_{I_k}$ missing/different from $dx_{1}\wedge ... \wedge dx_{n_f}$.

\noindent{\bf (ii).} Note that \begin{align*}
      \|d_t^\Lambda \eta_k\|^2&=\langle d_t^\Lambda \eta_k, d_t^\Lambda \eta_k\rangle \\
      &\leq \langle d_t^\Lambda \eta_k, d_t^\Lambda \eta_k\rangle+\langle d_t^{\Lambda*} \eta_k, d_t^{\Lambda*} \eta_k\rangle \\
      &=\langle d_t^{\Lambda*}d_t^\Lambda \eta_k, \eta_k\rangle+\langle d_t^{\Lambda}d_t^{\Lambda*} \eta_k, \eta_k\rangle \\
      &=\langle \Delta_{t}^\Lambda \eta_k, \eta_k\rangle.
  \end{align*}
A similar argument shows  $\|d_t^{\Lambda*}\xi_{k-1}\|^2 \leq \langle \Delta_{t}^\Lambda \xi_{k-1}, \xi_{k-1}\rangle$.

\medskip
  
With these inequalities, we now proceed to show that $$\left\langle \Delta_{C,t}\begin{pmatrix} \eta_k \\ \xi_{k-1}\end{pmatrix}, \begin{pmatrix} \eta_k \\ \xi_{k-1}\end{pmatrix} \right\rangle \geq c\,t\left|\begin{vmatrix}  \eta_k \\ \xi_{k-1}\end{vmatrix}\right|^2+O(\|\eta_k\|^2, \|\xi_{k-1}\|^2)\,.$$ 
%
%
Writing the forms in terms of linear combinations of eigenforms, $\eta_k=\displaystyle \sum_{\alpha}\eta^\alpha$ and $\xi_{k-1}=\sum_\beta\xi^\beta$, as in \eqref{etaexp}-\eqref{xiexp}, the expression for the inner product in   (\ref{eq:DeltatCformula}) becomes
\begin{align}
\label{eq:DeltatCSumformula}
\begin{split}
&    \left\langle \Delta_{C,t}\begin{pmatrix} \displaystyle \sum_{\alpha}\eta^\alpha \\ \displaystyle \sum_{\beta}\xi^\beta \end{pmatrix}, \begin{pmatrix} \displaystyle \sum_{\alpha} \eta^\alpha \\ \displaystyle \sum_{\beta}\xi^\beta\end{pmatrix} \right\rangle \\
    &\qquad=\sum_{\alpha}\lambda_{\eta^\alpha}\langle \eta^\alpha, \eta^\alpha \rangle  -\sum_{\alpha, \beta}2\langle d_t^{\Lambda}\eta^\alpha, \xi^\beta\rangle+ \sum_\beta\lambda_{\xi^\beta}\langle \xi^\beta, \xi^\beta \rangle+\sum_{\alpha, \beta}O(\|\eta^\alpha\|^2, \|\xi^\beta\|^2) 
\end{split}
\end{align}
where $\|\Lambda \eta^\alpha \|^2$ and $\|\om \xi^\beta\|^2$
are bounded by (constant multiple of) the norm of $\|\eta^\alpha\|^2$ and $\|\xi^{\beta}\|^2$. 
To estimate the $\langle d_t^{\Lambda}\eta^\alpha, \xi^\beta\rangle$ term, 
we start by applying the Cauchy-Schwarz inequality and arithmetic-geometric mean inequality to the pair $(\frac{1}{C}\|d_t^\Lambda\eta^\alpha\|^2, C\|\xi^\beta\|^2)$ for any positive constant $C$. Thus
\begin{align}
  \langle d_t^{\Lambda}\eta^\alpha, \xi^\beta \rangle \leq |\langle d_t^{\Lambda}\eta^\alpha, \xi^\beta \rangle| \leq \|d_t^\Lambda \eta^\alpha\|\cdot\|\xi^\beta\|
  \leq \frac{1} {2} (\frac{1} {C}\|d_t^\Lambda\eta^\alpha\|^2+ C\|\xi^\beta\|^2) .\label{argeoIneq}
\end{align} 
Next, note that $\eta^\alpha= a_\alpha \displaystyle\prod_rH_{m_r}(\sqrt{t}x_{r})e^{-t|x|^2/2}dx_{I_k}$ and so we can compute $d_t \eta^\alpha$ using the two standard properties of the Hermite polynomials: 
\[
\frac{\partial} {\partial x_r}H_{m_r}(\sqrt{t}x_r)=\sqrt{t}(2m_r)H_{m_r-1}(\sqrt{t}x_r),\] \[\sqrt{t}x_rH_{m_r}(\sqrt{t}x_r)=\frac{1} {2}H_{m_r+1}(\sqrt{t}x_r)+m_rH_{m_{r}-1}(\sqrt{t}x_r).\] 
Writing $f=f(p) + \sum_j \nu_j\frac{x_j^2}{2}\,$, where $\nu_j=\pm 1$, and so $df =  \sum_j\nu_jx_jdx_j$,  we obtain
\begin{align*}
     d_t \eta^\alpha&=(d+tdf)a_\alpha\prod_rH_{m_r}(\sqrt{t}x_{r})e^{-t|x|^2/2}\wedge dx_{I_k} \\
    &=a_\alpha\left[d(\prod_r H_{m_r}(\sqrt{t}x_{r})e^{-t|x|^2/2})+\sum_j\nu_j tx_jdx_j\prod_r H_{m_r}(\sqrt{t}x_{r})e^{-t|x|^2/2}\right]\wedge dx_{I_k} \\
    &=a_\alpha\left[\sum_{s}\left(\prod_{r\neq s} H_{m_{r}}(\sqrt{t}x_{m_r})\left( \frac{\partial} {\partial x_s}H_{m_s}(\sqrt{t}x_{s})\right)e^{-t|x|^2/2}\right)dx_s\right.\\
    &\quad\left.+\sum_j(\nu_j-1)tx_jH_{m_j}(\sqrt{t}x_{j})e^{-t|x|^2/2}\prod_{r\neq j}H_{m_r}(\sqrt{t}x_r)dx_j)\right]\wedge dx_{I_k} 
\\
    &=a_\alpha\left[\sum_{s}\left(\prod_{r\neq s} H_{m_{r}}(\sqrt{t}x_{m_r})\left(2\sqrt{t} m_sH_{m_s-1}(\sqrt{t}x_{s})e^{-t|x|^2/2}\right)\right)dx_s \right.\\
    &\quad \left.+\sum_j\sqrt{t} (\nu_j-1)\big(\frac{1} {2}H_{m_j+1}(\sqrt{t}x_j)+m_jH_{m_{j}-1}(\sqrt{t}x_j)\big)\prod_{r\neq j}H_{m_r}(\sqrt{t}x_r)dx_j\right] \wedge dx_{I_k}.
\end{align*}
The expression for $d_t \eta^\alpha$ above contains  three distinct terms with $H_{m_{r\neq s}}H_{m_s-1}$ associated with the basic form $dx_s \wedge dx_{I_k}$, and  $H_{m_{r\neq j}}H_{m_j+1}$ and  $H_{m_{r\neq j}}H_{m_j-1}$ both  associated with $dx_j \wedge dx_{I_k}$.  These terms are summed over the $2n$ possible $r$, so we have at most $6n$ terms in $d_t\eta^\alpha$ when $\eta^\alpha$ is basic. Next, note that $\Lambda=\iota_{\partial_{r+n}}\iota_{\partial_{r}}$ can have at most $n$ terms when applied to a basic $\eta^\alpha$. Thus, $d_t^\Lambda \eta^\alpha=d_t\Lambda \eta^\alpha-\Lambda d_t \eta^\alpha$ can have $6n^2$ basic terms for $d_t\Lambda$ and $6n^2$ terms for $\Lambda d_t$ so we get at most $12n^2$ terms (some of these may cancel out). Thus, for each of the basic $\eta^\alpha$, the term $\langle d_t^\Lambda\eta^\alpha, \xi^{\beta}\rangle$ is nonzero for at most $12n^2$ of the $\xi^{\beta}$.  Let us label this set of at most $12n^2$ terms by 
\begin{align}\label{Setadef}
S_{\eta^\alpha}=\{\xi^\beta: \langle d_t^\Lambda \eta^\alpha, \xi^\beta\rangle \neq 0\}.
\end{align}

To bound our inequality, we look at a particular $\alpha$ in \eqref{eq:DeltatCSumformula}, 
\[
\lambda_{\eta^\alpha}\langle \eta^\alpha, \eta^\alpha \rangle  -\sum_{\beta}2\langle d_t^{\Lambda}\eta^\alpha, \xi^\beta\rangle+ \sum_\beta\lambda_{\xi^\beta}\langle \xi^\beta, \xi^\beta \rangle+O(\|\eta^\alpha\|^2, \|\xi^\beta\|^2).
\] 
Ignoring the non $t$ term $O(\|\eta^\alpha\|^2, \|\xi^\beta\|^2)$,  we need to examine the cross terms $-2\langle d_t\eta^\alpha,  \xi^\beta\rangle$. For a particular $\eta^\alpha$, and all the $\xi^\beta$ that have a nonzero contribution (the at most $12n^2$ we found above).   
Thus, if we look at the sum of those terms and use our \eqref{argeoIneq}, we have  
\begin{align*}
\lambda_{\eta^\alpha}\|\eta^\alpha\|^2  -  &\sum_{\xi^{\beta}\in S_{\eta^\alpha}}  2\langle d_t^{\Lambda}\eta^\alpha, \xi^{\beta}\rangle
\geq \lambda_{\eta^\alpha}\|\eta^\alpha\|^2-\sum_{\xi^{\beta}\in S_{\eta^\alpha}} 2|\langle d_t^{\Lambda}\eta^\alpha, \xi^{\beta}\rangle| \\
&\geq    2t(\ell^{\eta^\alpha}+m^{\eta^\alpha})\|\eta^\alpha\|^2-\left(\sum_{\xi^{\beta}\in S_{\eta^\alpha}}\frac{1} {C}\|\Delta_{C,t}^\Lambda\eta^\alpha\|^2 +C\|\xi^{\beta}\|^2\right)\\ 
&\geq 2t(\ell^{\eta^\alpha}+m^{\eta^\alpha})\|\eta^\alpha\|^2-\sum_{\xi_\alpha \in S_{\eta^\alpha}}\frac{2t} {C}(\ell^{\eta^\alpha}_{\Lambda}+ m^{\eta^\alpha})\|\eta^\alpha\|^2 
- C \|\xi^{\beta^\prime}\|^2\\
&=2t\left(\ell^{\eta^\alpha}- \sum_{\xi_\alpha \in S_{\eta^\alpha}} \frac{\ell^{\eta^\alpha}_\Lambda} {C}+\left(1-\sum_{\xi_\alpha \in S_{\eta^\alpha}}\frac{1} {C}\right) m^{\eta^\alpha}\right)\|\eta^\alpha\|^2 
-\sum_{\xi^\beta \in S_{\eta^\alpha}}C\|\xi^{\beta}\|^2.
\end{align*}
Now, recall that $\eta_k$ is not harmonic, so either $\sum m^{\eta^\alpha}\geq 1$ or $\ell^{\eta^\alpha} \geq 1$. Also, $\ell^{\eta^{\alpha}}_\Lambda\leq 2n$, as we can have at most $2n$ terms different from $dx_1 \wedge ... \wedge dx_{n_f}$. Then we can bound the terms above with a positive constant by setting $12^2n^3=C>6n\cdot(12n^2)$, and showing the following is positive
\[
\displaystyle t\left(\ell^{\eta^\alpha}- \sum_{\xi_\alpha \in S_{\eta^\alpha}} \frac{\ell^{\eta^\alpha}_\Lambda} {C}+\left(1-\sum_{\xi_\alpha \in S_{\eta^\alpha}}\frac{1} {C}\right) m^{\eta^\alpha}\right)\|\eta^\alpha\|^2\,.
\]
We note that this is summed over $S_{\eta^\alpha}$, which we know has at most $12n^2$ terms, and changing $\xi^\beta$ does not change this portion of the sum, so \begin{align*}
    2t&\left(\ell^{\eta^\alpha}- \sum_{\xi_\alpha \in S_{\eta^\alpha}} \frac{\ell^{\eta^\alpha}_\Lambda} {C}+\left(1-\sum_{\xi_\alpha \in S_{\eta^\alpha}}\frac{1} {C}\right) m^{\eta^\alpha}\right)\|\eta^\alpha\|^2 \\
    &\geq 2t\left(\ell^{\eta^\alpha}-  \frac{12n^2\ell^{\eta^\alpha}_\Lambda} {C}+\left(1-\frac{12n^2} {C}\right) m^{\eta^\alpha}\right)\|\eta^\alpha\|^2\\  
  &=2t\left(\ell^{\eta^\alpha}-\left(\frac{12n^2\ell^{\eta^\alpha}_\Lambda} {12^2n^3}\right) +\left(1-\frac{12n^2} {12^2n^3}\right)m^{\eta^\alpha}\right)\|\eta^\alpha\|^2 \\
  &= 2t\left(\ell^{\eta^\alpha}-\frac{\ell^{\eta^\alpha}_\Lambda} {6n}+\left(1-\left( \frac{1} {6n}\right)\right)m^{\eta^\alpha}\right)\|\eta^\alpha\|^2 .
\end{align*} 

To investigate the above term, we split into two cases. \\
{\bf Case 1:}  $\ell^{\eta^\alpha} \geq 1$.  Thus
\begin{align*}
2t\left(\ell^{\eta^\alpha}-\frac{\ell^{\eta^\alpha}_\Lambda} {6n}+\left(1-\left( \frac{1} {6n}\right)\right)m^{\eta^\alpha}\right)\|\eta^\alpha\|^2
& \geq 2t\left(\ell^{\eta^\alpha}-\frac{\ell^{\eta^\alpha}_\Lambda} {6n}+\left(1-\left( \frac{1} {6n}\right)\right)0\right)\|\eta^\alpha\|^2 \\
 & \geq 2t(1-\frac{2n} {6n})\|\eta^\alpha\|^2\geq t\|\eta^\alpha\|^2 \,.
 \end{align*}
{\bf Case 2:} Suppose $\ell^{\eta^\alpha}\geq 0$ and $ m^{\eta^\alpha}\geq 1$. Then we have 
\begin{align*}
2t\left(\ell^{\eta^\alpha}-\frac{\ell^{\eta^\alpha}_\Lambda} {6n}+\left(1-\left( \frac{1} {6n}\right)\right)m^{\eta^\alpha}\right)\|\eta^\alpha\|^2
&  \geq 2t\left(0-\frac{2n} {6n}+\left(1-\frac{1} {6}\right) (1)\right)\|\eta^\alpha\|^2 \\
& \geq  2t(\frac{-1} {3}+\frac{5} {6})\|\eta^\alpha\|^2 \geq t\|\eta^\alpha\|^2\,.
 \end{align*}
 Thus, we can conclude that $c^{\eta^\alpha}$ is positively bounded 
 in all cases. So we have 
 \begin{align}
\lambda_{\eta^\alpha}\|\eta^\alpha\|^2-\sum_{\xi^\beta \in S_{\eta^\alpha}} 2\langle d_t^{\Lambda}\eta^\alpha, \xi^{\beta}\rangle\geq t\|\eta^\alpha\|^2-12^2n^3|S_\eta|\|\xi^\beta\|\geq t\|\eta^\alpha\|^2-12^2n^3(12n^2)\|\xi^\beta\|  \label{cEtaIneq}
 \end{align}
 where $S_{\eta^\alpha}$ has at most $12n^2$ elements.
Now looking back at our original expression \eqref{eq:DeltatCSumformula} 
\[ \lambda_{\eta^\alpha}\|\eta^\alpha\|^2  -2\langle d_t^{\Lambda}\eta^\alpha, \xi^{\beta}\rangle+ \lambda_{\xi^\beta}\|\xi^\beta\|^2+O(\|\eta^\alpha\|, \|\xi^\beta\|),\]
if we replace $2\langle d_t^\Lambda \eta^\alpha, \xi^\beta \rangle \leq 2|\langle d_t^\Lambda \eta^\alpha, \xi^\beta\rangle|$ and use our inequality above (which puts the cross terms with a particular $\eta^\alpha$), we then obtain
\begin{align*}
\lambda_{\eta^\alpha}\|\eta^\alpha\|^2  -2\langle d_t^{\Lambda}\eta^\alpha, \xi^{\beta}\rangle&+ \lambda_{\xi^\beta}\|\xi^\beta\|^2+O(\|\eta^\alpha\|, \|\xi^\beta\|)\\ 
&\geq \lambda_{\eta^\alpha}\|\eta^\alpha\|^2  -2|\langle d_t^{\Lambda}\eta^\alpha, \xi^{\beta}\rangle|+ \lambda_{\xi^\beta}\|\xi^\beta\|^2+O(\|\eta^\alpha\|, \|\xi^\beta\|) \\
&\geq t\|\eta^\alpha\|^2-12^2n^3\|\xi^{\beta}\|^2+\lambda_{\xi^\beta}\|\xi^\beta\|+O(\|\eta^\alpha\|,\|\xi^\beta\|) \\
&\geq t\|\eta^\alpha\|^2+\lambda_{\xi^\beta}\|\xi^\beta\|+O(\|\eta^\alpha\|,\|\xi^\beta\|),
\end{align*}  
where we have used the property that $12^2n^3\|\xi^{\beta}\|^2$ is independent of $t$. 

Next let $\eta=\eta^{\alpha^\prime}+\displaystyle \sum_\alpha\eta^\alpha$ and $\xi=\xi^{\beta^\prime}+\displaystyle\sum_{\beta}\xi^\beta$, where $\xi^{\beta^\prime}=b_\beta \zeta_{k-1}$, $\xi^\beta$ are the non-harmonic components, and $\eta^{\alpha^\prime}=\sum (a_{j}x_{n+j}dx_j-a_{n+j}x_jdx_{n+j})\wedge \zeta_{k-1}$ is the component of $\eta$ we know exists to be orthogonal to our two solutions by \eqref{ortoHarm}, and $\eta^\alpha$ are the components orthogonal to $\eta^{\alpha^\prime}$. Note we showed that $\langle d_t^\Lambda \eta^{\alpha^\prime}, \xi_{\beta}\rangle = 0$ for the  $\xi^\beta \neq \xi^{\beta^\prime}$ as these are orthogonal to $d_t^\Lambda \eta^{\alpha^\prime} = b_\beta\zeta_{k-1}$. Using  (\ref{dtLEtaA}), (\ref{cEtaIneq}), and (\ref{eq:DeltatCSumformula}) to both $\eta_k$ and $\xi_{k-1}$ we have 
\begin{align*}
\lambda_{C,t}\left|\begin{vmatrix}\eta_k \\ \xi_{k-1} \end{vmatrix}\right|^2
    &=\left\langle \Delta_{C,t}\begin{pmatrix} \eta^{\alpha^\prime}+\displaystyle\sum_\alpha\eta^\alpha \\ \xi^{\beta^\prime}+\displaystyle\sum_\beta\xi^\beta \end{pmatrix}, \begin{pmatrix} \eta^{\alpha^\prime}+\displaystyle\sum_\alpha\eta^\alpha \\ \eta^{\beta^\prime}+\displaystyle\sum_\beta\xi^\beta \end{pmatrix}\right\rangle \\
    &\geq \lambda_{\eta^{\alpha^\prime}}\|\eta^{\alpha^\prime}\|^2-2\langle d_t^{\Lambda}\eta^{\alpha^\prime}, \xi^{\beta^\prime}\rangle  +0\|\xi^{\beta^\prime}\|^2+ \sum_{\alpha}\lambda_{\eta^\alpha}\|\eta^\alpha\|^2-\sum_{\alpha, \beta, \beta^\prime}2\langle d_t^{\Lambda}\eta^\alpha, \xi^{\beta}\rangle+\\   &\mspace{10mu}\sum_{\beta}\lambda_{\xi^\beta}\|\xi^\beta\|^2+O(\|\eta^\alpha\|, \|\xi^{\beta}\|,\|\eta^{\alpha^\prime}\|, \|\xi^{\beta^\prime}\|) \\
&\geq \lambda_{\eta^{\alpha^\prime}}\|\eta^{\alpha^\prime}\|^2+8t \|\xi^{\beta^\prime}\|^2 +\sum_\alpha\lambda_{\eta^\alpha}\|\eta^\alpha\|^2   -2\sum_{\xi^\beta \in S_{\eta^\alpha}}|\langle d_t^{\Lambda}\eta^{\alpha^\prime}, \xi^{\beta}\rangle|+ \\
&\mspace{10mu}\sum_{\beta}\lambda_{\xi^\beta}\|\xi^\beta\|^2+O(\|\eta^\alpha\|, \|\xi^{\beta}\|,\|\eta^{\alpha^\prime}\|, \|\xi^{\beta^\prime}\|) \\
&\geq t\|\eta^{\alpha^\prime}\|^2+\sum_\alpha t\|\eta^\alpha\|^2+t\|\xi^{\beta^\prime}\|^2-\sum_{\beta} 12^2n^3(12n^2)\|\xi^{\beta}\|^2+\\
&\mspace{10mu}\sum_{\beta}t\|\xi^\beta\|^2+O(\|\eta^\alpha\|, \|\xi^{\beta}\|,\|\eta^{\alpha^\prime}\|, \|\xi^{\beta^\prime}\|) \\
&\geq t\left|\begin{vmatrix}\eta^\alpha \\ \xi^\beta \end{vmatrix}\right|^2-O(\|\eta^\alpha\|,\|\eta^{\alpha^\prime}\|,\|\xi^\beta\|,\|\xi^{\beta^\prime}\|)   
\end{align*} 
where we know each $\lambda_{\xi_\beta}$ is bounded by $2t$. Thus each term in our (possibly infinite) sum has a scale of $t$ while it is  added/subtracted by elements of order $O(\|\eta^\alpha\|,\|\eta^{\alpha^\prime}\|,\|\xi^\beta\|,\|\xi^{\beta^\prime}\|)$ with no factor of $t$. So by driving $t$ large enough it will dominate this inequality (as  has no factor of $t$) and  as eigenforms are non-zero $\lambda>c\,t$ for sufficiently large $t$. 

As another result of our inequality above, we now show that the kernel of $\Delta_{C,t}$ is generated by only our two harmonic solutions. For if it were a third independent solution, we could project it to the orthogonal complement of our two solutions and get a nonzero harmonic solution in the orthogonal complement of $\begin{pmatrix} \zeta_{k} \\ 0 \end{pmatrix}, \begin{pmatrix} -\tau \wedge \zeta_{k-1} \\ \zeta_{k-1} \end{pmatrix}$, but then our inequality above would apply to our new solution, but choosing $t$ large enough would give a positive eigenvalue, contradicting that our projection was harmonic. Thus the kernel of $\Delta_{C,t}$ is two dimensional and is generated by $\begin{pmatrix} \zeta_{k} \\ 0 \end{pmatrix}, \begin{pmatrix} -\tau \wedge \zeta_{k-1} \\ \zeta_{k-1} \end{pmatrix}$.
\end{proof}

\subsection{Local harmonic solutions approximating eigenforms of the deformed cone Laplacian}

In this subsection, we will show that the local harmonic solutions found in Section \ref{harmsol} can approximate global eigenforms of $\Delta_{C,t}$ when $t$ is large.   In particular, as we make clear in Theorem \ref{FCtdim},  the local harmonic solutions represent all the low-lying eigenforms (i.e. those with small eigenvalues) when $t$ is sufficiently large.  The discussion here will parallel that for the Witten-deformed de Rham complex as described in \cite[Section 5.6]{Zhang}.

Without loss of generality, we assume that each $V_p$ where $p \in Crit(f)$ is an open ball of radius $4a$, and assume $t>0$. Let $\gamma_p$ be a smooth bump function such that $\gamma_p(z)=1$ for $|z|\leq a$ and $\gamma_p(z)=0$ for $|z|\geq 2a$. Define 
$$\Phi_{p,1}(t)=\sqrt{\int_{V_p}\gamma_p^2\left\|\begin{matrix} \zeta_{k} \\ \iota_Z\zeta_{k} \end{matrix}\right\|^2_x dV\!ol }\, , \indent \Phi_{p,2}(t)=\sqrt{\int_{V_p} \gamma_p^2 \left\|\begin{matrix} -\tau \wedge \zeta_{k} \\ \zeta_{k}\end{matrix}\right\|_x^2dV\!ol}\, ,$$ 
where $\left\|\begin{matrix} \eta_k \\ \xi_{k-1}\end{matrix} \right\|_x$ is the norm given by $\left\langle \begin{pmatrix} \eta_k \\ \xi_{k-1}\end{pmatrix}, \begin{pmatrix} \eta_k^\prime \\ \xi_{k-1}^\prime\end{pmatrix} \right\rangle_x dVol= \eta_k \w *\eta_k^\prime + \xi_{k-1} \w *\xi_{k-1}^\prime$,  the pointwise form inner product induced by $*$ . We further define 
\begin{equation} 
\rho_{p, 1}(t)=\frac{\gamma_p} {\Phi_{p,1}(t)}\begin{pmatrix} \zeta_{k} \\ \iota_{Z}\zeta_{k}\end{pmatrix},\indent \rho_{p, 2}(t)=\frac{\gamma_p} {\Phi_{p,2}(t)}\begin{pmatrix} -\tau \wedge \zeta_{k} \\ \zeta_{k} \end{pmatrix}\,,\label{rhopi}\end{equation}
which are  unit norm with compact support contained in $V_p$. Note that $\rho_{p,1}(t), \rho_{p,2}(t)$ encompass both generators of local harmonic solutions around $p\in Crit(f)$  described in Corollary \ref{gen2sol}.

Now let $\textbf{H}^1(\tC(\om))$ be the first Sobolev space with respect to a Sobolev norm on $\tC(\om)$. Let $E_{C,t}$ denote the direct sum of the vector spaces generated by the $\rho_{p, 1}(t), \rho_{p, 2}(t)$, and let $E^\perp_{C,t}$ be the orthogonal complement to $E_{C,t}$ in $\textbf{H}^1(\tC(\omega))$, so that $\textbf{H}^1(\tC(\omega))=E_{C,t}\oplus E^\perp_{C,t}$. Let $p_{C,t}, p_{C,t}^\perp$ denote the orthogonal projections from $\textbf{H}^1(\tC(\omega))$ to $E_{C,t}$ and $E_{C,t}^\perp$, respectively, and decompose the operator $D_{C,t}=d_{C, t}+d^*_{C,t}$
via 
$$D_{C,t,1}=p_{C,t}D_{C,t}\,p_{C,t}\,,\  D_{C, t,2}=p_{C,t}D_{C,t}\,p_{C,t}^\perp\,,\ D_{C, t,3}=p_{C,t}^\perp D_{C,t}\,p_{C,t}\,,\ D_{C, t,4}=p_{C,t}^\perp D_{C,t}\,p_{C,t}^\perp\,.$$
We have the following results:
\begin{theorem}
There exists a constant $t_0>0$ such that
\begin{itemize}
\item[(i)] for any $t \geq t_0$, and $0 \leq u \leq 1$, the operator
\begin{align*}
    D_{C, t, u} = D_{C, t,1}+D_{C, t,4}+u(D_{C, t,2}+D_{C, t, 3})=D_{C,t}+(u-1)(D_{C, t,2}+D_{C, t, 3})
\end{align*}
is Fredholm;
\item[(ii)] the operator
$D_{C, t, 4}:E_{C,t}^\perp \cap \textbf{H}^1(\tC(\omega)) \to E_{C,t}^\perp$ is invertible.
\end{itemize}
\end{theorem}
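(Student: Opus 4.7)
\emph{Proof plan.} The argument adapts Zhang's analysis of the Witten deformation of the de Rham complex \cite[Sec.~5.6]{Zhang} to the present cone setting, with the local estimates of Subsections~2.2--2.3 as the key input.

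For (i), $D_{C,t}=d_{C,t}+d_{C,t}^*$ is a first-order elliptic operator on the closed manifold $M$: the entries $\om\wedge$ and $\Lambda$ in the matrix forms of $d_{C,t}$ and $d_{C,t}^*$ are of zeroth order, so the principal symbol of $D_{C,t}$ agrees with that of $(d+d^*)\oplus(d+d^*)$, which is invertible off the zero section. Hence $D_{C,t}\colon\mathbf{H}^1(\tC(\om))\to\tC(\om)$ is Fredholm. The projection $p_{C,t}$ has finite rank (equal to twice the number of critical points of $f$), so $D_{C,t,2}$ and $D_{C,t,3}$ are finite-rank; therefore $D_{C,t,u}-D_{C,t}=(u-1)(D_{C,t,2}+D_{C,t,3})$ is compact, and a compact perturbation of a Fredholm operator remains Fredholm.

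For (ii), the plan is to establish the coercivity estimate
\[
\|D_{C,t,4}\sigma\|^2 \,\geq\, c\,t\,\|\sigma\|^2,\qquad \sigma\in E_{C,t}^\perp\cap\mathbf{H}^1(\tC(\om)),
\]
for some $c>0$ and all $t\geq t_0$. Since $D_{C,t,4}$ is symmetric on $E_{C,t}^\perp$, this bound yields trivial kernel, closed range, and hence bijectivity. Using $D_{C,t}^2=\Delta_{C,t}$ together with $D_{C,t,4}\sigma=p_{C,t}^\perp D_{C,t}\sigma$ for $\sigma\in E_{C,t}^\perp$, one has $\|D_{C,t,4}\sigma\|^2=\langle\Delta_{C,t}\sigma,\sigma\rangle-\|p_{C,t}D_{C,t}\sigma\|^2$. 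The correction $\|p_{C,t}D_{C,t}\sigma\|^2$ is negligible: $D_{C,t}\rho_{p,i}(t)$ is supported in the annulus $a\le|z|\le 2a$ where $d\gamma_p\neq 0$ and carries the Gaussian factor $e^{-t|z|^2/2}$ from $\zeta_k$, so $\|D_{C,t}\rho_{p,i}(t)\|=O(e^{-ta^2/2})$, and pairing via $\langle D_{C,t}\sigma,\rho_{p,i}\rangle=\langle\sigma,D_{C,t}\rho_{p,i}\rangle$ gives $\|p_{C,t}D_{C,t}\sigma\|^2=O(e^{-ta^2})\|\sigma\|^2$.

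It remains to prove $\langle\Delta_{C,t}\sigma,\sigma\rangle\geq c\,t\,\|\sigma\|^2$ by an IMS-type localization. Pick a smooth partition $\sum_p\gamma_p^2+\tilde\gamma^2=1$ with each $\gamma_p$ supported in a compatible coordinate chart around $p\in\mathrm{Crit}(f)$ and $\tilde\gamma$ supported away from $\mathrm{Crit}(f)$; then
\[
\langle\Delta_{C,t}\sigma,\sigma\rangle = \sum_p\langle\Delta_{C,t}(\gamma_p\sigma),\gamma_p\sigma\rangle + \langle\Delta_{C,t}(\tilde\gamma\sigma),\tilde\gamma\sigma\rangle - \langle\mathcal{E}\sigma,\sigma\rangle,
\]
where $\mathcal{E}=\sum_p|d\gamma_p|^2+|d\tilde\gamma|^2$ is bounded independently of $t$. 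On $\supp(\tilde\gamma)$ one has $\|df\|_x^2\geq c_0>0$, so the $t^2\|df\|_x^2$ term in $\Delta_{C,t}$ forces $\langle\Delta_{C,t}(\tilde\gamma\sigma),\tilde\gamma\sigma\rangle\geq c_0t^2\|\tilde\gamma\sigma\|^2$. Each $\gamma_p\sigma$ lies in a compatible chart and is exactly orthogonal to both local harmonic generators of \eqref{hsatk}: if $H_{p,i}$ denotes the unnormalized generator, then $\langle\gamma_p\sigma,H_{p,i}\rangle=\Phi_{p,i}(t)\langle\sigma,\rho_{p,i}\rangle=0$ since $\sigma\in E_{C,t}^\perp$. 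The theorem of Subsection~2.3 then yields $\langle\Delta_{C,t}(\gamma_p\sigma),\gamma_p\sigma\rangle\geq c_1t\|\gamma_p\sigma\|^2+O(\|\gamma_p\sigma\|^2)$. Summing and absorbing the $O(\|\sigma\|^2)$ errors into the $ct$ bound for $t$ sufficiently large completes (ii). The main obstacle is this last application: the Subsection~2.3 estimate is proved by expanding into Hermite eigenmodes of $\Delta_t$ and controlling the off-diagonal cross terms $\langle d_t^\Lambda\eta^\alpha,\xi^\beta\rangle$, and one must verify that the uniform combinatorial bound on $|S_{\eta^\alpha}|$ used there carries over to the Hermite expansion of the arbitrary truncated form $\gamma_p\sigma$, so that the constant $c_1$ is independent of $\sigma$.
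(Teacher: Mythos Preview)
Your plan is correct and follows the same framework as the paper. Two tactical differences are worth noting. For (i), your finite-rank argument for $D_{C,t,2},D_{C,t,3}$ is cleaner than the paper's route, which derives the $C_1/t$ norm bound first (a bound that by itself does not give compactness; finite rank of $p_{C,t}$ is the real reason). For (ii), the paper does not use the IMS formula on $\langle\Delta_{C,t}\sigma,\sigma\rangle$; instead it estimates $\|D_{C,t}\sigma\|$ directly via a two-case support decomposition (supported in a single $V_p(4a)$ vs.\ supported away from all $V_p(2a)$) and glues with a cutoff $\tilde\gamma_p$ and the triangle inequality. In the near-$p$ case the paper passes to the flat Euclidean model $E_p$ and introduces an auxiliary projection $p'_{C,t}$ onto the \emph{untruncated} Gaussians $\rho'_{p,i,t}$, showing $\|(p'_{C,t}-p_{C,t})\sigma\|\le C t^{-1/2}\|\sigma\|$ before invoking the Subsection~2.3 estimate on $\sigma-p'_{C,t}\sigma$. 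Your orthogonality observation $\langle\gamma_p\sigma,H_{p,i}\rangle=\Phi_{p,i}(t)\langle\sigma,\rho_{p,i}\rangle=0$ plays the same role more directly, though you should still account for the $O(|x|^2)$ discrepancy between the chart metric and the flat metric used in the Hermite expansion, exactly as the paper's $p'_{C,t}$ comparison does.

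Your flagged ``main obstacle'' is not one: the bound $|S_{\eta^\alpha}|\le 12n^2$ in Subsection~2.3 counts how many Hermite modes the operator $d_t^\Lambda$ can connect to a single given mode, which is a combinatorial property of $d_t^\Lambda$ and the dimension $n$ alone, independent of which form is being expanded. Hence the resulting constant $c_1$ is automatically uniform in $\sigma$.
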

To prove these, we need the following inequalities:
\begin{lemma}\label{Dtineq}
There exists a constant $t_1>0$ such that for $\sigma \in E_{C,t}^\perp \cap \textbf{H}^1(\tC(\omega)), \sigma^\prime \in E_{C,t}$ and $t \geq t_1$, we have
\begin{align*}
\|D_{C,t,2}\sigma\|_0 \leq \frac{C_{1}\|\sigma\|_0} {t}\,,\\
\|D_{C,t,3}\sigma^\prime\|_0 \leq \frac{C_{1}\|\sigma^\prime\|_0} {t}\,,
\end{align*}
for some positive constant $C_{1}$.
\end{lemma}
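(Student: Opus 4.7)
The plan is to use self-adjointness of $D_{C,t}=d_{C,t}+d_{C,t}^*$ to reduce both estimates to bounds on $\|D_{C,t}\rho_{p,i}(t)\|_0$, one for each critical point $p\in Crit(f)$ and each $i\in\{1,2\}$. Shrinking $a$ if necessary so that the balls $V_p$ are pairwise disjoint, the $\rho_{p,i}(t)$ associated to different critical points have disjoint supports; and at a common critical point the two generators from Corollary \ref{gen2sol} lie in distinct cone degrees, hence are automatically orthogonal. Thus $\{\rho_{p,i}(t)\}$ is an orthonormal basis of $E_{C,t}$. For $\sigma\in E_{C,t}^\perp$, expanding $p_{C,t}D_{C,t}\sigma$ in this basis and moving $D_{C,t}$ to the other slot by self-adjointness yields
\begin{align*}
\|D_{C,t,2}\sigma\|_0^2 \;=\; \sum_{p,i}\bigl|\langle \sigma,\,D_{C,t}\rho_{p,i}(t)\rangle\bigr|^2 \;\leq\; \|\sigma\|_0^2 \sum_{p,i}\|D_{C,t}\rho_{p,i}(t)\|_0^2.
\end{align*}
For $\sigma'\in E_{C,t}$, expanding $\sigma'=\sum c_{p,i}\rho_{p,i}(t)$ with $\sum|c_{p,i}|^2=\|\sigma'\|_0^2$ and applying Cauchy--Schwarz to $\|D_{C,t,3}\sigma'\|_0\leq\|D_{C,t}\sigma'\|_0\leq\sum|c_{p,i}|\,\|D_{C,t}\rho_{p,i}(t)\|_0$ reduces to the same quantities. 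Because $Crit(f)$ is finite, it therefore suffices to prove $\|D_{C,t}\rho_{p,i}(t)\|_0 \leq C/t$ uniformly in $(p,i)$ for $t$ sufficiently large.

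For each $(p,i)$, writing $\rho_{p,i}(t)=\gamma_p h_{p,i}/\Phi_{p,i}(t)$ with $h_{p,i}$ the corresponding local harmonic generator from Corollary \ref{gen2sol}, the Leibniz rule gives
\begin{align*}
\Phi_{p,i}(t)\,D_{C,t}\rho_{p,i}(t) \;=\; \gamma_p\,D_{C,t}h_{p,i} \;+\; [D_{C,t},\gamma_p]\,h_{p,i}.
\end{align*}
The $t$-dependent pieces $t\,df\w$ and $t\,\iota_{\nabla f}$, together with $\om\w$ and $\Lambda$, all commute with the scalar $\gamma_p$, so $[D_{C,t},\gamma_p]$ reduces to a zeroth-order operator built from $d\gamma_p$ and $\iota_{\nabla\gamma_p}$, supported in the annulus $\{a\leq|z|\leq 2a\}$ where $e^{-t|x|^2/2}\leq e^{-ta^2/2}$. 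Since $\Phi_{p,i}(t)\sim t^{-n/2}$, this commutator contribution to $\|D_{C,t}\rho_{p,i}(t)\|_0$ is exponentially small in $t$ and negligible beside $1/t$.

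The main term is $\gamma_p D_{C,t}h_{p,i}$. The explicit verifications in Proposition \ref{prop:harmonicsmallk} show that $h_{p,i}$ is exactly annihilated by the model operator $D_{C,t}^0$ built from the canonical data of the compatible chart: Darboux $\om=\sum dx_i\w dx_{i+n}$, flat metric $g_{ij}^{(0)}=\delta_{ij}$, and the quadratic Morse $f$. Since $\om$ and $f$ already coincide with their canonical forms in this chart, all of the discrepancy $D_{C,t}-D_{C,t}^0$ comes from the deviation $g_{ij}-\delta_{ij}=O(|x|^2)$ through the codifferential $d_t^*$ (the operator $\Lambda$ is metric-independent by \eqref{Llocal}). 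In normal coordinates this yields an error operator of the schematic form $O(|x|^2)\partial + O(|x|)$; combined with the Gaussian moments $\int|x|^{2m}e^{-t|x|^2}\,dV\sim t^{-(n+m)}$ and the normalization $\Phi_{p,i}(t)\sim t^{-n/2}$, it yields a polynomial decay in $t$.

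The delicate step will be recovering the sharp rate $1/t$ rather than only $1/\sqrt{t}$. A naive estimate of the $O(|x|^2)\partial$ piece acting on $e^{-t|x|^2/2}$ picks up a factor $tx_j$ from differentiation of the Gaussian, producing integrands of size $t|x|^3 e^{-t|x|^2/2}$ whose $L^2$ norm is $O(t^{-(n+1)/2})$; normalized by $1/\Phi_{p,i}(t)\sim t^{n/2}$, this is only $O(1/\sqrt{t})$. To sharpen to $O(1/t)$ one uses the normal-coordinate expansion $g_{ij}=\delta_{ij}+\tfrac{1}{3}R_{ikjl}x^kx^l+O(|x|^3)$ together with the antisymmetry $R_{ikjl}=-R_{kijl}$ of the curvature tensor, so that the leading cubic-in-$x$ contributions are odd and integrate to zero against the even Gaussian weight, leaving only quartic contributions of the desired order. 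Once this cancellation is established uniformly in $(p,i)$, the per-generator bound $\|D_{C,t}\rho_{p,i}(t)\|_0\leq C/t$ follows and the lemma is proved.
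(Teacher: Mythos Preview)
Your reduction to bounding each $\|D_{C,t}\rho_{p,i}(t)\|_0$ and the Leibniz split into a commutator piece and a main piece are exactly the paper's approach, and your treatment of the commutator term $[D_{C,t},\gamma_p]h_{p,i}$ as exponentially small on the annulus is correct and matches the paper.

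The difference, and the gap, is in your ``main term'' $\gamma_p D_{C,t}h_{p,i}$. The paper simply takes this to vanish on $|x|<a$: it treats the local generators $h_{p,i}$ as exactly annihilated by $D_{C,t}$, so that $D_{C,t}\rho_{p,i}(t)$ is supported entirely in the annulus $a\leq|x|\leq 2a$. There the Gaussian factor is at most $e^{-ta^2/2}$, and combined with $\Phi_{p,i}(t)^{-1}\sim t^{n/2}$ one gets a bound of order $t^{n}e^{-ta^2/2}\ll 1/t$. In other words, the paper's estimate is exponential, not polynomial, and comes entirely from the cutoff region.

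You instead try to quantify the discrepancy $D_{C,t}-D_{C,t}^0$ coming from $g_{ij}-\delta_{ij}=O(|x|^2)$, and this is where your argument breaks down. Your parity claim ``the leading cubic-in-$x$ contributions are odd and integrate to zero against the even Gaussian weight'' does not help for an $L^2$ norm: you are computing $\int\bigl|(D_{C,t}-D_{C,t}^0)h_{p,i}\bigr|^2$, and squaring destroys the oddness. As for the algebraic cancellation from curvature: the antisymmetry $R_{ikjl}=-R_{iklj}$ does annihilate the contribution from the principal part $(g^{ij}-\delta^{ij})\partial_j$ acting on the Gaussian (since $\sum_{j,l}R_{ikjl}x_jx_l=0$), but it does \emph{not} annihilate the zeroth-order contribution $t\bigl(\iota_{\nabla f}-\iota_{\nabla^0 f}\bigr)$ inside $d_t^*$. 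There the relevant contraction is $\sum_{i,k}R_{ikjl}\,\nu_i x_i x_k$ with $\nu_i=\pm1$ the signs in $df=\sum_i\nu_i x_i\,dx_i$, and the antisymmetry in $(i,k)$ no longer forces this to vanish. So your scheme stalls at the naive rate $O(1/\sqrt{t})$ rather than the stated $O(1/t)$.

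The clean resolution, implicit in the paper and standard in the Witten-deformation literature it follows, is to arrange the Riemannian metric to be exactly Euclidean in each compatible chart (the end result is metric-independent). Then $D_{C,t}h_{p,i}=0$ on $|x|<a$, the main term disappears, and only the annulus commutator survives, giving the exponential bound.
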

\begin{proof}
 Note that $D_{C,t,3}$ is the adjoint of $D_{C,t,2}$, so if we prove the first bound, we obtain the second.  Since $\rho_{p,1}(t), \rho_{p,2}(t)$ are supported in $V_p$, we have 
\begin{align*}
    D_{C,t,2}\,\sigma&=\, p_{C,t}\,D_{C,t}\,p_{C,t}^\perp\, \sigma \,=\,p_{C,t}\,D_{C,t}\,\sigma \\
    &=\sum_{p \in Crit(f)} \rho_{p, i}(t) \int_{V_p} \left\langle \rho_{p,i}(t), D_{C,t}\sigma\right\rangle_x dV\!ol \\
    &=\sum_{p \in Crit(f), n_f(p)=k} \rho_{p,1}(t) \int_{V_p} \left\langle D_{C,t}\frac{\gamma_p} {\Phi_{p,1}(t)}\begin{pmatrix} \zeta_{k} \\ \iota_{Z}\zeta_{k}\end{pmatrix}\!,\,\sigma \right\rangle_x dVol \\
    &\qquad +\sum_{q \in Crit(f), n_f(q)=k-1}\rho_{q,2}(t)\int_{V_q}\left\langle D_{C,t}\frac{\gamma_q} {\Phi_{q,2}(t)}\begin{pmatrix} -\tau \wedge \zeta_{k-1} \\ \zeta_{k-1} \end{pmatrix}\!,\, \sigma \right\rangle_x dV\!ol.
\end{align*}
And note that $\gamma_p$ is constant on $|\textbf{x}|<a, |\textbf{x}|>2a$, so $D_{C,t}\rho_{p, 1}(t)=0=D_{C,t}\rho_{p,2}(t)$ (as these are harmonic solutions multiplied by a constant) on $|\textbf{x}|<a, |\textbf{x}|>2a$. \\ 
Now note that we chose 
$$\Phi_{p,1}(t)=\sqrt{\int_{V_p} \gamma_p^2 \left\|\begin{matrix} \zeta_{k} \\ \iota_{Z}\zeta_{k} \end{matrix}\right\|_x^2dV\!ol}\ ,\indent 
\Phi_{p,2}(t)=\sqrt{\int_{V_p} \gamma_p^2 \left\|\begin{matrix} -\tau \wedge \zeta_{k} \\ \zeta_{k}\end{matrix}\right\|_x^2 dV\!ol}\ ,$$ 
so that $\rho_{p,1}(t)=\displaystyle\frac{\gamma_p} {\Phi_{p,1}(t)} \begin{pmatrix} \zeta_{k} \\ \iota_{Z}\zeta_{k} \end{pmatrix}$ and $\rho_{p,2}(t)=\displaystyle\frac{\gamma_p} {\Phi_{p,2}(t)} \begin{pmatrix} -\tau \wedge \zeta_{k} \\ \zeta_{k} \end{pmatrix}$ have unit norm. 
Thus note that, as $\zeta_{k}, 
-\tau \wedge \zeta_{k}, \iota_{Z}\zeta_{k}$ are either $e^{-t|x|^2/2}dx_I$ or $x_je^{-t|x|^2/2}dx_J$ where $dx_J=\iota_{\partial_{n+j}}dx_I$ or $dx_{n+j}\wedge dx_I$. 
Thus, in the region $a<|\textbf{x}|<2a$, these are bounded above by $\int_{\mathbb{R}^n} (1+2a^n)e^{-t|x|^2/2}dV\!ol =\max(1,(2a)^n)\left(\frac{2t} {\pi}\right)^{2n/2}\,,$ 
and bounded below by 
\[\int_{B_a} \max(1,2a^n)e^{-t|x|^2/2}dV\!ol =C_2\left(\frac{2t} {\pi}\right)^{2n/2}\,,\]
and thus $C_3 t^n \leq \frac{1} {\Phi_{p, i}(t)} \leq C_4t^n$. 
We will now look at the integrals 
$$\int_{V_p} \langle D_{C,t}\left(\frac{\gamma_p}{\Phi_{p,1}(t)} \begin{pmatrix} \zeta_{k} \\ \iota_{Z}\zeta_{k} \end{pmatrix}\right), \sigma\rangle dV\!ol\ , \indent
\int_{V_p}\langle D_{C,t}\left(\frac{\gamma_p}{\Phi_{p,2}(t)} \begin{pmatrix} -\tau \wedge \zeta_{k} \\ \zeta_{k} \end{pmatrix}\right), \sigma\rangle dV\!ol\ .$$ 
Note that we can restrict these to $a<|x|<2a$, where $\gamma_p\zeta_{k}, \Lambda\zeta_{k}, \zeta_{k-1}, -\omega \wedge \zeta_{k-1}$ are bounded by $C_5e^{-ta^2}$. Also note that 
\begin{align*}
[d_{t}+d_{t}^*](\gamma_p \psi)&=(d\gamma_p)\wedge \psi+\gamma_p d_t\psi+(d^*+t\iota_\nabla f)\gamma_p\psi \\
&=(d\gamma_p)\wedge \psi+\gamma_p d_t\psi+(\partial_i\gamma_p \psi_I) \iota_{\partial i} dx_I+t\iota_{\nabla f}\gamma_p\psi_I dx_I \\
&=(d\gamma_p)\wedge \psi+\gamma_p d_t\psi+(\partial_i\gamma_p)\psi_I\iota_{\partial_i} dx_I+\gamma_p\partial_i \psi_I \iota_{\partial i} dx_I+\gamma_p t\iota_{\nabla f}\psi_Idx_I \\
&=(d\gamma_p)\wedge \psi+\gamma_p d_t\psi+(\partial_i\gamma_p)\psi_I \iota_{\partial_i}dx_I+\gamma_p d^* \psi_I +t\gamma_p \iota_{\nabla f}\psi \\
&=(d\gamma_p)\wedge \psi+\gamma_p d_t\psi+(\partial_i\gamma_p)\psi_I\iota_{\partial_i}dx_I+\gamma_p d^*_t \psi.
\end{align*} 
Therefore, for $\psi=\zeta_{k}, \iota_Z\zeta_{k}, -\tau \wedge \zeta_{k}, 
$
the terms we can get in $D_{C, t}$ are $\Lambda \psi, \omega\wedge \psi$, which are bounded by $\max(1, 2a)e^{-ta^2/2}$, and $(d\gamma_p)\wedge \psi,\ \gamma_p d_t\psi+\gamma_p d^*_t \psi,\ (\partial_i\gamma_p)\psi_I\iota_{\partial_i}dx_I$ which are bounded by $C_6\max(1, 2a)e^{-ta^2/2}$ for some positive constant $C_6$. Therefore, we have 
\begin{align*}
\int_{V_p} \left\langle D_{C,t} \left(\frac{\gamma_p} {\Phi_{p,1, t}} \begin{pmatrix} \zeta_{k} \\ \iota_{Z}\zeta_{k} \end{pmatrix}\right), \, \sigma\right\rangle_x dV\!ol & \leq \|\sigma\|_0 \sqrt{\int_{a<|\textbf{x}|<2a}\frac{C_7} {\Phi_{p,1,t}} e^{-ta^2/2}dV\!ol } \\
&\leq C_8 t^ne^{-ta^2/2}\|\sigma\|_0 \leq \frac{C_9\|\sigma\|_0} {t}.
\end{align*}
and also 
\begin{align*}
\int_{V_p} \left\langle D_{C,t} \left(\frac{\gamma_p} {\Phi_{p,2, t}} \begin{pmatrix} -\tau \wedge \zeta_{k} \\ \zeta_{k} \end{pmatrix}\right), \, \sigma\right\rangle_x dV\!ol & \leq \|\sigma\|_0 \sqrt{\int_{a<|x|<2a}\frac{C_{10}} {\Phi_{p,2,t}} e^{-ta^2/2}dV\!ol } \\
&\leq C_{11} t^ne^{-ta^2/2}\|\sigma\|_0 \leq \frac{C_{12}\|\sigma\|_0} {t}.
\end{align*}
Therefore, for $C_1=\max(C_{12}, C_8)$ the norm of $D_{C,t,2}\sigma$ satisfies $$\|D_{C,t,2}\sigma\| \leq \left \|\rho_{p,i}(t)\frac{C_{1}\|\sigma\|_0} {t}\right \| =\frac{C_{1}\|\sigma\|_0} {t},$$ and we thus have our first inequality, and thus using properties of adjoints we have proved both inequalities. \end{proof}
Next, note that this implies that $D_{C, t, 2}$ and $D_{C, t, 3}$ are compact operators, and thus $D_{C, t,u}=D_{C,t}+(u-1)(D_{C, t, 2}+D_{C, t, 3})$ is a Fredholm operator plus a compact operator, hence also Fredholm.
To show that the operator $D_{C, t, 4}:E_{C,t}^\perp \cap \textbf{H}^1(\tC(\omega)) \to E_{C,t}^\perp \text{ is invertible }$, we shall follow Bismut and Zhang \cite{BZ} to show that there exists a constant $t_2>0$ such that for $t \geq t_2$ and $\sigma \in E_{C,t}^\perp$, 
\begin{align}
\|D_{C, t,4}\sigma\|_0\geq C_{13}\sqrt{t}\|\sigma\|_0\,. \label{sqrtTineq}  
\end{align} 
To show this, we consider the inequality first in two special cases:
\\[.5cm]
\textbf{Case 1}: $\supp (\sigma) \subset V_p(4a)$ (the ball of radius 4a).
\\
Let $E_p$ be a Euclidean space containing $V_p$, and $\textbf{H}^0(E_p)$ be the Sobolev space corresponding to $\|\cdot\|_0$, the $0$-th Sobolev norm on $E_p$. Define $\rho^\prime_{p,1, t}=\left(\frac{t} {\pi}\right)^{n/2}e^{-t|x|^2/2}\rho_{p,1,t}$ and $\rho^\prime_{p,2, t}=\left(\frac{t} {\pi}\right)^{n/2}e^{-t|x|^2}\rho_{p,t,2}$, and define $p_{C,t}^\prime$ to be the projection onto the subspace of $\textbf{H}^0(E_p)$ spanned by the $\rho_{p,i,t}^\prime$. Since $\sigma \in E_{C,t}^\perp \cap \textbf{H}^1(\tC(\om))$, we have that $p_{C,t}$ projecting $\sigma$ to $E_{C,t}$ is zero, i.e. $p_{C,t}\,\sigma=0$. Accordingly, we have that 
\begin{align*}
p_{C,t}^\prime \,\sigma&=p_{C,t}^\prime\, \sigma -p_{C,t}\,\sigma \\
&= \sum_{p\in Crit(f)} \rho_{p,i,t}^\prime\int_{E_p} (1-\gamma_p(|x|))\left(\frac{t} {\pi}\right)^{n/2}e^{-t|x|^2/2} \langle \rho_{p,i,t}, \sigma\rangle_x dV\!ol.
\end{align*}
As $\gamma_p=1$ near $p$, and zero outside of the ball of radius $4a$, a similar calculation of the integral as was done for the proof of Lemma \ref{Dtineq} shows \begin{align}\|p_{C,t}^\prime\sigma\|^2 \leq \frac{C_{14}} {\sqrt{t}}\|\sigma\|^2\,. \label{recipsqrtTineq} \end{align}
Next, note that $D_{C,t}\rho_{p,i,t}^\prime=0$, so $D_{C,t}p_{C,t}^\prime \sigma =0$, and as $\sigma-p_{C,t}^\prime \sigma \in (E_{C,t}^\prime)^\perp$ we can apply inequalities \ref{sqrtTineq} and \ref{recipsqrtTineq} and get 
$$\|D_{C,t} \sigma\|^2_0=\|D_{C,t}(\sigma-p^\prime_{C,t} \sigma)\|_0^2 \geq C_{13}t\|\sigma-p^\prime \sigma\|_0^2 \geq C_{13}t\|\sigma\|^2_0-C_{16}\sqrt{t}\|\sigma\|^2_0\,. $$ 
Thus, $\|D_{C,t}\sigma\|_0 \geq \frac{C_{17}\sqrt{t}} {2} \|\sigma\|_0\,$. 
\\[.5cm]
\textbf{Case 2}: $\supp (\sigma) \subset M \setminus \displaystyle \bigcup_{p \in Crit(f)} V_p(2a)$ (and still $\sigma \in E_{C,t}^\perp \cap H^1(M)$). \\
To prove the estimate in this case, recall from (\ref{DeltaCt}) that \begin{align*}
D_{C,t}^2&=\Delta_{C,t}=\begin{pmatrix} \Delta_t+\omega \Lambda & -d_t^{\Lambda*} \\ -d_t^\Lambda & \Delta_t+\Lambda\omega  \end{pmatrix} \\
&= \begin{pmatrix} \Delta+t(\mathcal{L}_{\nabla f}+\mathcal{L}^*_{\nabla f}))+t^2||df||_x^2+\omega\Lambda & -d^*+t(\iota_{\nabla f}\omega - \omega \iota_{\nabla f}) \\ -d^\Lambda + t(\Lambda df - df\Lambda) &  \Delta+t(\mathcal{L}_{\nabla f}+\mathcal{L}^*_{\nabla f})+t^2||df||_x^2+\Lambda \omega)\end{pmatrix} \\
&=\begin{pmatrix} \Delta+\omega\Lambda & -d^{\Lambda*} \\ -d^{\Lambda} & \Delta+\Lambda\omega \end{pmatrix}+t\begin{pmatrix} \mathcal{L}_{\nabla f}+\mathcal{L}^*_{\nabla f} & \iota_{\nabla f}\omega -\omega \iota_{\nabla f} \\ \Lambda df-df\Lambda  & \mathcal{L}_{\nabla f}+\mathcal{L}^*_{\nabla f} \end{pmatrix}+t^2||df||_x^2 I. \\
&=\Delta_{C}+t\begin{pmatrix} \mathcal{L}_{\nabla f}+\mathcal{L}^*_{\nabla f} & \iota_{\nabla f}\omega -\omega \iota_{\nabla f} \\ \Lambda df-df\Lambda  & \mathcal{L}_{\nabla f}+\mathcal{L}^*_{\nabla f} \end{pmatrix}+t^2||df||_x^2 I.
\end{align*} 
Since we are away from the zeroes of $df$, $||df||_x^2\geq C_{17}$. Since $\supp(\sigma)$ is away from the zeroes, we have that \[\langle t^2||df||_x^2\sigma, \sigma  \rangle =\int_M t^2||df||_x^2(\eta \wedge *\eta+\xi \wedge * \xi) \geq \int_M t^2C_{17}(\eta \wedge *\eta+\xi \wedge * \xi)=t^2C_{18}\|\sigma\|^2.
\] 
Also, note that $D_{C}^2=d_Cd_C^*+d_C^*d_C$ is a positive operator, hence $\langle D_C\sigma, \sigma\rangle \geq 0\,$. Note further that $\begin{pmatrix} \mathcal{L}_{\nabla f}+\mathcal{L}^*_{\nabla f} & \iota_{\nabla f}\omega -\omega \iota_{\nabla f} \\ \Lambda df-df\Lambda  & \mathcal{L}_{\nabla f}+\mathcal{L}^*_{\nabla f} \end{pmatrix}$ is a zeroth order operator with an operator norm $C_{19}$, so
\begin{align*}
\|D_{C,t}\sigma\|^2 &= \langle D_{C,t}\sigma, D_{C,t}\sigma\rangle =\langle D^2_{C,t}\sigma,\sigma\rangle \\
&=\left\langle \left(D_C^2+t\begin{pmatrix} \mathcal{L}_{\nabla f}+\mathcal{L}^*_{\nabla f} & \iota_{\nabla f}\omega -\omega \iota_{\nabla f} \\ \Lambda df-df\Lambda  & \mathcal{L}_{\nabla f}+\mathcal{L}^*_{\nabla f} \end{pmatrix}+t^2|df|^2I\right)\sigma,\sigma\right\rangle\\
&=\left\langle \Delta_C \sigma, \sigma \right\rangle+t\left\langle\begin{pmatrix} \mathcal{L}_{\nabla f}+\mathcal{L}^*_{\nabla f} & \iota_{\nabla f}\omega -\omega \iota_{\nabla f} \\ \Lambda df-df\Lambda  & \mathcal{L}_{\nabla f}+\mathcal{L}^*_{\nabla f} \end{pmatrix}\sigma, \sigma\right\rangle+t^2\langle |df|^2\sigma,\sigma\rangle \\
& \geq (0-C_{19}t+C_{18}t^2)\|\sigma\|^2,
\end{align*} 
from which we can conclude $\|D_{C,t}\sigma\| \geq C_{20}\sqrt{t} \|\sigma\|\,.$

With the two cases at hand, we now derive the inequality (\ref{sqrtTineq}). First, we define the function $\tilde{\gamma_p} \in C^\infty(M)$ such that $\tilde{\gamma_p}(y)=\gamma_p(|y|/2)$ in $V_p$ and $\tilde{\gamma_p}|_{M\setminus \bigcup V_p(4a)}=0$.  For $\sigma \in E_{C,t}^\perp \cap \textbf{H}^1(M)$, one can see that $\tilde{\gamma_p}\sigma \in E_{C,t}^\perp \cap \textbf{H}^1(M)$.
Also, $\|D_{C,t}\sigma\| \geq \|D_{C,t} \sigma - \tilde{\gamma_p}D_{C,t}\sigma\|+\|\tilde{\gamma_p}D_{C,t}\sigma\|\,$.

Therefore, with Case 1 and 2, we deduce that there is a $C_{21}$ such that for $t \geq t_1+t_2$, 
\begin{align*}
\|D_{C,t}\sigma\|_0 &\geq \frac{1} {2}(\|(1-\tilde{\gamma_p})D_{C,t}\sigma\|_0+\|\tilde{\gamma_p}D_{C,t}\sigma\|_0) \\
&\geq \frac{1} {2}\left(\|D_{C,t}(1-\tilde{\gamma_p})\sigma+[D_{C,t}, \gamma_p]\sigma\|_0+\|D_{C,t}\tilde{\gamma_p}\sigma+[\gamma_p, D_{C,t}]\sigma\|_0\right) \\
&\geq \frac{\sqrt{t}} {2} (C_{20}\|(1-\tilde{\gamma_p})\sigma\|_0+\sqrt{C_{17}}\|\tilde{\gamma_p}\sigma\|_0)-C_{21}\|\sigma\|_0 \\
&\geq \sqrt{t}C_{22} \|\sigma\|_0-C_{21}\|\sigma\|_0
\end{align*}
where $C_{22}=\min\{\sqrt{C_{17}}/2, C_{20}/2\}$. Thus, we have completed the proof of inequality (\ref{sqrtTineq}) from which we conclude that the operator $D_{C, t,4}:E_{C,t}^\perp \cap \textbf{H}^1(M) \to E_{C,t}^\perp$ is invertible when $t$ is sufficiently large.
\begin{lemma}\label{Dct1}
There exists a constant $C_{23}>0$ such that for $\sigma \in {\bf H}^1(\tC(\omega))$ and any $t\geq t_3$, we have
$$\|D_{C, t,1} \sigma\|_0 \leq \frac{C_{23}\|\sigma\|_0} {t}\,.$$
\end{lemma}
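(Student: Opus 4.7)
The plan is to compute $D_{C,t,1}\sigma = p_{C,t}D_{C,t}\,p_{C,t}\sigma$ by expanding in the finite basis $\{\rho_{p,i}(t)\}_{p\in Crit(f),\,i=1,2}$ of $E_{C,t}$. Writing $p_{C,t}\sigma=\sum_{p,i}\langle\sigma,\rho_{p,i}(t)\rangle\,\rho_{p,i}(t)$ reduces the task to bounding the matrix entries $\langle D_{C,t}\rho_{p,i}(t),\rho_{q,j}(t)\rangle$ by $C/t$, uniformly in the finitely many indices. Since $\supp\rho_{p,i}(t)\subset V_p$ and we may shrink $a$ so that the $V_p$'s are disjoint, only the diagonal $p=q$ entries are nonzero.

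The key structural observation is that the unbumped generators
\begin{align*}
\tilde\rho_{p,1}=\begin{pmatrix}\zeta_{k}\\ \iota_Z\zeta_{k}\end{pmatrix}\,,\qquad \tilde\rho_{p,2}=\begin{pmatrix}-\tau\wedge\zeta_{k}\\ \zeta_{k}\end{pmatrix}
\end{align*}
are exact harmonic solutions of $\Delta_{C,t}$ by Corollary \ref{gen2sol}, so $D_{C,t}\tilde\rho_{p,i}=0$. Since $\rho_{p,i}(t)=\gamma_p\,\tilde\rho_{p,i}/\Phi_{p,i}(t)$, the Leibniz rule gives
\begin{align*}
D_{C,t}\rho_{p,i}(t)=\frac{1}{\Phi_{p,i}(t)}\,[D_{C,t},\gamma_p]\,\tilde\rho_{p,i}\,.
\end{align*}
Since $\omega\wedge$, $\Lambda$, and multiplication by $tdf$ are pointwise algebraic operations commuting with $\gamma_p\cdot$, the commutator reduces to $[d+d^*,\gamma_p]$, a zeroth-order operator in $d\gamma_p$ supported on the annulus $\{a\leq|x|\leq 2a\}$.

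On that annulus, the Gaussian factor $e^{-t|x|^2/2}$ inside $\tilde\rho_{p,i}$ is bounded by $e^{-ta^2/2}$; the polynomial pieces coming from $\tau$ and $\iota_Z$ are uniformly bounded since $|x|\leq 2a$; and the normalization estimate $1/\Phi_{p,i}(t)\leq C_4 t^n$ from the proof of Lemma \ref{Dtineq} then yields the pointwise bound
\begin{align*}
|D_{C,t}\rho_{p,i}(t)|_x\leq C\,t^{n}\,e^{-ta^2/2}\qquad\text{on }V_p.
\end{align*}
Since $\|\rho_{q,j}(t)\|_0=1$, Cauchy--Schwarz on the annulus (whose volume is $t$-independent) gives $|\langle D_{C,t}\rho_{p,i}(t),\rho_{q,j}(t)\rangle|\leq C'\,t^{n}\,e^{-ta^2/2}$. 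Choosing $t_3$ so that $t^{n+1}e^{-ta^2/2}\leq 1$ for all $t\geq t_3$ makes this at most $1/t$, and summing the finitely many matrix entries together with $\|p_{C,t}\sigma\|_0\leq\|\sigma\|_0$ produces the desired $\|D_{C,t,1}\sigma\|_0\leq C_{23}\|\sigma\|_0/t$.

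No substantial obstacle is expected: the whole argument hinges on the fact that $D_{C,t}$ annihilates the true local harmonic generators, so the only contribution to $D_{C,t}\rho_{p,i}(t)$ comes from differentiating the cutoff $\gamma_p$, which is supported in an annulus bounded away from the critical point. There, the Gaussian decay dominates every polynomial in $t$, yielding a bound much stronger than the stated $1/t$. The quantitative steps are direct refinements of the annular estimates already carried out in the proof of Lemma \ref{Dtineq}.
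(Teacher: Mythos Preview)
Your proposal is correct and follows essentially the same approach as the paper: expand $p_{C,t}\sigma$ in the finite basis $\{\rho_{p,i}(t)\}$, observe that $D_{C,t}\rho_{p,i}(t)$ is supported on the annulus $a\leq|x|\leq 2a$ because the unbumped generators are harmonic for $D_{C,t}$ and $\gamma_p$ is constant off the annulus, and then invoke the Gaussian decay $e^{-ta^2/2}$ together with the normalization bound $\Phi_{p,i}(t)^{-1}\leq C t^n$ already established in Lemma~\ref{Dtineq} to conclude $\|D_{C,t}\rho_{p,i}(t)\|_0\leq C/t$. Your presentation is perhaps a bit more explicit in framing the computation as bounding the matrix entries $\langle D_{C,t}\rho_{p,i}(t),\rho_{q,j}(t)\rangle$ and in isolating the commutator $[D_{C,t},\gamma_p]=[d+d^*,\gamma_p]$, but this is exactly the content of the paper's reference back to ``a similar argument to Lemma~\ref{Dtineq}.''
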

\begin{proof} 
If we examine the integral, then using the $\rho_{p,i}$'s in \eqref{rhopi} as our basis for $E_{C,t}$, we find 
\begin{align*}
p_{C,t}\,\sigma&=\sum_{p \in Crit(f)} \rho_{p, i}(t) \int_{V_p} \langle \rho_{p,i}(t), \sigma\rangle_x dV\!ol\,.
\end{align*}
And note that if we take $D_{C,t}\,  \rho_{p, i}(t)$, then this is zero in the region $|x-p|<a$ and $|x-p|>2a\,$, and from a similar argument to Lemma \ref{Dtineq}, $\|D_{C,t}\,  \rho_{p, i}(t)\| \leq \frac{C_{12}} {t}$. Then using that $ \rho_{p, i}(t)$ has unit norm, we obtain
\begin{align*}
\|p_{C,t}D_{C,t}\,p_{C,t} \sigma\|_0&=\left \| \sum_{p \in Crit(f)} \rho_{p, i}(t) \int_{V_p} D_{C,t}\,\rho_{p,i}(t) \langle \rho_{p,i}(t), \sigma\rangle_x dV\!ol \right \|_0 \\
&\leq \sum_{p \in Crit(f)} \left\| \,\rho_{p, i}(t) \,\langle_x \rho_{p,i}(t), \sigma\rangle \int_{V_p} D_{C,t}\,\rho_{p,i}(t)dV\!ol  \right\| \\
    &\leq \sum_{p \in Crit(f)} \left\|\langle \rho_{p,i}(t), \sigma\rangle \frac{C_{12}} {t}  \right\| \\
    &\leq \sum_{p \in Crit(f)}\frac{C_{12}} {t}  \|\rho_{p,i}(t)\|_0 \,\|\sigma\|_0 \\
     &\leq\frac{C_{23}\|\sigma\|_0} {t}.  
\end{align*}
\end{proof}
\begin{definition}
For any $b>0$, let $E_{C,t}(b)$ denote the direct sum of eigenspaces of $D_{C, t}$ with eigenvalues in $[-b, b]$. Since $D_{C, t}$ is a self-adjoint linear operator, $E_{C,t}(b)$ is a finite-dimensional subspace of $\textbf{H}^0(\tC(\omega))\,.$
\end{definition}
Let $p_{C,t}(b)$ denote the projection operator from $\textbf{H}^0(\tC(\omega))$ to $E_{C,t}(b)\,$.
\begin{lemma} \label{PCtineq}
There exists a $C_{24} > 0$ such that for $t \geq t_4$ and $\sigma \in E_{C,t}\,,$ 
$$\|p_{C,t}(b)\sigma-\sigma\|_0 \leq \frac{C_{24}} {t} \|\sigma\|_0.
$$
\end{lemma}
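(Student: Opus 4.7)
The plan is to exploit the block decomposition $D_{C,t} = D_{C,t,1} + D_{C,t,2} + D_{C,t,3} + D_{C,t,4}$ together with the spectral decomposition of the self-adjoint operator $D_{C,t}$, showing that every $\sigma \in E_{C,t}$ is nearly annihilated by $D_{C,t}$, which forces it to lie close to the low-eigenvalue subspace $E_{C,t}(b)$.

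First I would observe that for $\sigma \in E_{C,t}$ we have $p_{C,t}\sigma = \sigma$, so
\begin{align*}
D_{C,t}\sigma \;=\; p_{C,t}D_{C,t}p_{C,t}\sigma \,+\, p_{C,t}^\perp D_{C,t}p_{C,t}\sigma \;=\; D_{C,t,1}\sigma + D_{C,t,3}\sigma.
\end{align*}
Applying Lemma \ref{Dct1} to the first term and the second estimate of Lemma \ref{Dtineq} to the second term, both of which give a bound of order $1/t$, yields a constant $C>0$ and $t_4 \geq \max(t_1, t_3)$ such that for $t \geq t_4$,
\begin{align*}
\|D_{C,t}\sigma\|_0 \;\leq\; \|D_{C,t,1}\sigma\|_0 + \|D_{C,t,3}\sigma\|_0 \;\leq\; \frac{C}{t}\,\|\sigma\|_0.
\end{align*}

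Next I would bring in the spectral decomposition. Since $D_{C,t}$ is self-adjoint, the spaces $E_{C,t}(b)$ and its orthogonal complement are both invariant under $D_{C,t}$, and we may write $\sigma = p_{C,t}(b)\sigma + \sigma^\perp$ with $\sigma^\perp := (I - p_{C,t}(b))\sigma$ orthogonal to $p_{C,t}(b)\sigma$. By definition $\sigma^\perp$ lies in the closure of eigenspaces with $|\lambda| > b$, so expanding $\sigma^\perp$ in an orthonormal eigenbasis $\{\phi_j\}$ with eigenvalues $\lambda_j$ satisfying $|\lambda_j| > b$ gives
\begin{align*}
\|D_{C,t}\sigma^\perp\|_0^2 \;=\; \sum_{j} \lambda_j^2\,|\langle \sigma^\perp, \phi_j\rangle|^2 \;\geq\; b^2\,\|\sigma^\perp\|_0^2.
\end{align*}
Moreover $D_{C,t}\,p_{C,t}(b)\sigma$ and $D_{C,t}\sigma^\perp$ lie in orthogonal $D_{C,t}$-invariant subspaces, so $\|D_{C,t}\sigma\|_0^2 = \|D_{C,t}\,p_{C,t}(b)\sigma\|_0^2 + \|D_{C,t}\sigma^\perp\|_0^2 \geq b^2\|\sigma^\perp\|_0^2$.

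Combining the two estimates gives $b\,\|p_{C,t}(b)\sigma - \sigma\|_0 = b\,\|\sigma^\perp\|_0 \leq \|D_{C,t}\sigma\|_0 \leq \tfrac{C}{t}\|\sigma\|_0$, which is the desired inequality with $C_{24} = C/b$. There is no real analytic obstacle here because the hard work has already been done in Lemmas \ref{Dtineq} and \ref{Dct1}; the only thing to check carefully is the orthogonality of the spectral decomposition with respect to the $\textbf{H}^0$ inner product used for the projections $p_{C,t}(b)$, which is standard for a self-adjoint operator on a Hilbert space. The main subtlety to keep an eye on is that $t_4$ must be chosen at least as large as the thresholds from both earlier lemmas, and the constant $C_{24}$ absorbs the fixed constant $b$.
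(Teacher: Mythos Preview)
Your proof is correct, but it takes a different route from the paper. The paper follows the Bismut--Zhang/Zhang textbook approach: it represents the spectral projection as a Riesz contour integral
\[
p_{C,t}(b)\sigma - \sigma \;=\; \frac{1}{2\pi i}\int_{\delta} \bigl((\lambda - D_{C,t})^{-1} - \lambda^{-1}\bigr)\sigma\, d\lambda,
\]
first establishing a uniform lower bound $\|(\lambda - D_{C,t})\sigma'\|_0 \geq C_{25}\|\sigma'\|_0$ on the contour (using the $\sqrt{t}$ estimate \eqref{sqrtTineq} as well as Lemmas~\ref{Dtineq} and~\ref{Dct1}), and then rewriting the integrand as $\lambda^{-1}(\lambda - D_{C,t})^{-1}(D_{C,t,1}\sigma + D_{C,t,3}\sigma)$ to extract the $1/t$ factor. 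Your argument bypasses the resolvent machinery entirely: you observe directly that $D_{C,t}\sigma = D_{C,t,1}\sigma + D_{C,t,3}\sigma$ is $O(1/t)$ small, and then use the elementary spectral fact that on $E_{C,t}(b)^\perp$ one has $\|D_{C,t}\cdot\|_0 \geq b\|\cdot\|_0$. This is shorter and uses fewer ingredients (you never invoke \eqref{sqrtTineq}). The contour-integral approach is the one traditionally used in this literature and yields resolvent bounds that can be recycled in finer analyses; your approach is more economical for the bare statement at hand.
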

\begin{proof} Let $\delta=\{\lambda \in \mathbb{C}: |\lambda|=C_\lambda\}$ be the counterclockwise oriented circle at radius $C_\lambda$. By Lemma \ref{Dtineq} and Lemma \ref{Dct1}, we have that for any $\lambda \in \delta, t\geq t_1+t_2\,$, and $\sigma^\prime \in \textbf{H}^1(\tC(\omega)$,
\begin{align*}
\|(\lambda-&D_{C, t})\sigma^\prime\|_0\\
&\geq \frac{1} {2}\left(\|\lambda p_{C,t} \sigma^\prime -D_{t,1}p_{C,t}\sigma^\prime-D_{t,2}p_{C,t}^\perp\sigma^\prime\|_0+\|\lambda p_T^\perp\sigma^\prime-D_{t,3}p_{C,t}\sigma^\prime-
D_{t,4}p_{C,t}^\perp\sigma^\prime \|_0\right)\\
&\geq \frac{1} {2}\left(\left(C_\lambda-\frac{C_{12}} {t} -\frac{C_{23}} {t}\right)\| p_{C,t}\sigma^\prime\|_0+\left(C_{17}\sqrt{t}-C_\lambda-\frac{C_{23}} {t}\right) \|p_{C,t}^\perp\sigma^\prime\|_0\right).
\end{align*}
By the above inequality, for $t_4 \geq t_1+t_2$ and $C_{25}>0$ such that for any $t>t_4$ and $\sigma^\prime \in \textbf{H}^1(\tC(\omega))$ \\
$$\|(\lambda-D_{C,t})\sigma^\prime\|_0 \geq C_{25}\|\sigma^\prime\|_0\,.$$ 
Thus, for any $\lambda \in \delta$, $\lambda-D_{C,t}:\textbf{H}^1(\tC(\omega)) \to \textbf{H}^0(\tC(\omega))$ is invertible, so the resolvent $(\lambda-D_{C,t})^{-1}$ is well defined. By the basic spectral theory for operators, for $\sigma \in E_{C,t}$, one has
\begin{align*}
p_{C,t}(b)\sigma-\sigma &= \frac{1}{2\pi\sqrt{-1}}\int_{\delta}((\lambda-D_{C,t})^{-1}-\lambda^{-1})\sigma d\lambda\,.
\end{align*}
Since with $p_{C,t}$ the projection to $E_{C,t}$, we have $p_{C,t}^\perp \sigma =0\,$. Thus, using the inequality above, we have 
\begin{align*}
   ( (\lambda-D_{C,t})^{-1}-\lambda^{-1})\sigma&=\lambda^{-1}(\lambda-D_{C,t})^{-1}D_{C,t}\sigma\\
    &=\lambda^{-1}(\lambda-D_{C,t})^{-1}(D_{C, t,1}\sigma+D_{C, t,3}\sigma).
\end{align*}
One deduces by Lemma \ref{Dtineq} and above we have
\begin{align*}
    \|(\lambda-D_{C,t})^{-1}(D_{C, t,1}\sigma+D_{C, t,3}\sigma)\|_0&\leq C^{-1}_{13}\|D_{t,1}\sigma+D_{t,3}\sigma\|_0\\
    &\leq C^{-1}_{25}\left(\frac{C_{12}+C_{23}} {t}\right) \|\sigma\|_0
\end{align*}
and plugging this into the integral gives 
\begin{align*}
    \|p_{C,t}(b)\sigma-\sigma \|_0&= \left \|\frac{1}{2\pi\sqrt{-1}}\int_{\delta}((\lambda-D_{C,t})^{-1}-\lambda^{-1})\sigma d\lambda \right \|_0\\
    &\leq \frac{1}{2\pi} \int_\delta \|\lambda^{-1}(\lambda-D_{C,t})^{-1}(D_{C, t,1}\sigma+D_{C, t,3}\sigma) \|_0d\lambda \\
    &\leq \frac{C_{26}}{2\pi} \int_\delta \|C^{-1}_{25}\left(\frac{C_{12}+C_{23}} {t} \right)\|\sigma\|_0d\lambda \\
    &\leq \frac{C_{24}} {t} \|\sigma\|_0.
\end{align*} 
\end{proof}
\begin{theorem}  \label{FCtdim}
Let $F_{C, t}^{[0,b]}$ be the space of all eigenforms of $\Delta_{C,t}$ with eigenvalues in $[0,b]$. Then for $t$ large enough, $(F_{C, t}^{[0,b]}, d_{C,t})$ is a chain complex with $\dim\,(F^{[0,b]}_{C,t})_k=m_k+m_{k-1}$.
\end{theorem}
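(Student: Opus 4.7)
The plan is to bracket $\dim (F_{C,t}^{[0,b]})_k$ between two copies of $m_k + m_{k-1}$, with the approximate harmonic space $E_{C,t}$ serving as the intermediary.

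First I would verify that $(F_{C,t}^{[0,b]}, d_{C,t})$ really is a cochain complex: the relation $[\Delta_{C,t}, d_{C,t}] = 0$ is immediate from $d_{C,t}^2 = 0$ and the definition of $\Delta_{C,t}$, and $\Delta_{C,t}$ sends $\tC^k(\om)$ to itself, so each eigenspace inherits the cone grading and $F_{C,t}^{[0,b]}$ is preserved by $d_{C,t}$ as a graded subcomplex. Next I would count generators of $E_{C,t}$ by cone degree: Corollary \ref{gen2sol} associates to each critical point $p$ with index $n_f(p) = k$ one generator $\rho_{p,1}(t)$ of cone degree $k$ and one generator $\rho_{p,2}(t)$ of cone degree $k+1$. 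Summing over all critical points then gives $\dim (E_{C,t})_k = m_k + m_{k-1}$.

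For the lower bound I would use the spectral projection $p_{C,t}(b)$ onto $F_{C,t}^{[0,b]}$, which preserves the cone degree because $\Delta_{C,t}$ does. Lemma \ref{PCtineq} yields $\|p_{C,t}(b)\sigma - \sigma\|_0 \leq (C_{24}/t)\|\sigma\|_0$ for $\sigma \in E_{C,t}$, so for $t$ sufficiently large this is strictly less than $\|\sigma\|_0$ and $p_{C,t}(b)$ is injective on $(E_{C,t})_k$. Hence $\dim (F_{C,t}^{[0,b]})_k \geq m_k + m_{k-1}$. For the reverse bound I would argue by contradiction: suppose $\dim (F_{C,t}^{[0,b]})_k > m_k + m_{k-1}$ and pick a unit $\sigma \in (F_{C,t}^{[0,b]})_k \cap E_{C,t}^\perp$. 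On the one hand, spectral containment gives $\|D_{C,t}\sigma\|_0^2 = \langle \Delta_{C,t}\sigma, \sigma\rangle \leq b$. On the other hand, $\sigma \in E_{C,t}^\perp$ implies $p_{C,t}^\perp D_{C,t}\sigma = D_{C,t,4}\sigma$, so $\|D_{C,t}\sigma\|_0 \geq \|D_{C,t,4}\sigma\|_0 \geq C_{13}\sqrt{t}$ by the invertibility estimate \eqref{sqrtTineq}. For $t > b/C_{13}^2$ these bounds are incompatible, forcing $\dim (F_{C,t}^{[0,b]})_k \leq m_k + m_{k-1}$ and hence equality.

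The real obstacle has in fact already been surmounted in the preceding subsections: the local generators $\rho_{p,i}(t)$ are only approximately $d_{C,t}$-harmonic because the cutoff $\gamma_p$ destroys exactness in the annulus $a \leq |x| \leq 2a$, so the two subspaces $E_{C,t}$ and $F_{C,t}^{[0,b]}$ do not coincide on the nose. Lemmas \ref{Dtineq}--\ref{PCtineq} together with the estimate \eqref{sqrtTineq} quantify this discrepancy — the first two show that $E_{C,t}$ is nearly $D_{C,t}$-invariant and that $p_{C,t}(b)$ is close to the identity on $E_{C,t}$, while the last shows that $D_{C,t}$ is uniformly expanding on $E_{C,t}^\perp$ at rate $\sqrt{t}$ — and the sandwich argument above is precisely the device that converts these estimates into the exact dimension equality.
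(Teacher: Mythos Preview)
Your proposal is correct and follows the same overall strategy as the paper: use the approximate harmonic space $E_{C,t}$ together with Lemma~\ref{PCtineq} for the lower bound and the $\sqrt{t}$-estimate \eqref{sqrtTineq} for the upper bound. Your execution is in fact slightly cleaner than the paper's in two respects. First, the paper argues the total dimension $\dim E_{C,t}(b)=\dim E_{C,t}$ via eigenspaces of $D_{C,t}$ and only afterwards recovers the graded statement by applying the degree projections $Q_k$ and a counting argument; you work degree by degree from the start, which is legitimate because both $\Delta_{C,t}$ and $E_{C,t}$ respect the cone grading. Second, for the upper bound the paper chooses the contradictory element $\sigma$ orthogonal to $p_{C,t}(b)E_{C,t}$ and then must argue (via Lemma~\ref{PCtineq} again) that $p_{C,t}\sigma$ is small before invoking the estimates on $D_{C,t,4}$; you instead pick $\sigma\in (F_{C,t}^{[0,b]})_k\cap E_{C,t}^\perp$ directly by a dimension count, so that \eqref{sqrtTineq} applies immediately. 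Both routes reach the same contradiction, but yours avoids the extra approximation step.
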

\begin{proof}
By applying Lemma \ref{PCtineq} to the $\rho_{p, i, t}$'s when $t$ is large enough, $p_{C, t}(b)\rho_{p, i, t}$ will be linearly independent. (If they are not linearly dependent, then $\sigma=\rho_{p, i, t}$ and $\sigma^\prime=a\rho^\prime_{p^\prime, i^\prime, t}$ would have $p_{C,t}(b)\sigma=p_{C,t}(b)\sigma^\prime$, but then by Lemma \ref{PCtineq}, we would have $$\frac{C_1}{t}\|\sigma-\sigma^\prime\|_0 \geq \|p_{C,t}(b)\sigma-p_{C,t}(b)\sigma^\prime-(\sigma-\sigma^\prime)\|_0=\|\sigma-\sigma^\prime\|_0$$  which is a contradiction.)
Thus for $t\geq t_5$, we have $\dim(E_{C, t}(b))\geq \dim(E_{C,t})$. Now assume for the purposes of contradiction that $\dim(E_{C, t}(b))\geq \dim(E_{C,t})$. Then there is a nonzero $\sigma \in E_{C, t}(b)$ that is orthogonal to $p_{C,t}(b)E_{C,t}$, or $\langle \sigma, p_{C,t}\rho_{p,i}(t)\rangle_{\textbf{H}^0(\tC(\omega)}=0$ for any $\rho_{p,i}$. Then from Lemma \ref{Dtineq} and Case 1, we have that
\begin{align*}
    p_{C,t}\,\sigma&=\sum_{p \in Crit(f)} \langle \sigma, \rho_{p,i}(t)\rangle \rho_{p,i}(t) \\
    &=\sum_{p \in Crit(f)} \langle \sigma, \rho_{p,i}(t)\rangle \rho_{p,i}(t)-\sum_{p \in Crit(f)} \langle \sigma, p_{C, t}\,\rho_{p,i}(t)\rangle p_{C,t}(b)\,\rho_{p,i}(t) \\
    &=\sum_{p \in Crit(f)}\!\!\!\! \langle \sigma, \rho_{p,i}(t)\rangle (\rho_{p,i}(t)-p_{C,t}(b))\,\rho_{p,i}(t))+\!\!\!\!\!\!
    \sum_{p \in Crit(f)} \!\!\!\!\!\langle \sigma, \rho_{p,i}(t)-p_{C, t}\,\rho_{p,i}(t)\rangle p_{C,t}(b)\,\rho_{p,i}(t).
\end{align*}
By Lemma \ref{Dtineq}, there exists a $C_{12}>0$ so when $t\geq t_5$ $\|p_{C,t}\sigma\|_0 \leq \frac{C_{12}} {t} \|\sigma\|_0$, and thus $$\|p_{C,t}^\perp \sigma \|_0= (\|\sigma\|_0-\|p_{C,t}\sigma\|_0) \geq \|\sigma\|_0\geq C_{25}\|\sigma\|_0\,. $$
Using this, (\ref{sqrtTineq}), Lemma \ref{PCtineq}, and when $t>0$ is large enough, we have 
\begin{align*}
    C_{25}C_{20}\sqrt{t}\|\sigma\|_0 &\leq C_{20}\sqrt{t}||p_{C,t}^\perp \sigma||_0\\ &\leq \|D_{C, t}p_{C,t}^\perp\sigma\|_0 \\
    &=\|D_{C, t}\sigma-D_{C,t}p_{C,t}\sigma\|_0\\
    &=\|D_{C, t}\sigma-D_{C, t,1}\sigma-D_{C, t,3}\sigma\|_0 \\
    &\leq \|D_{C, t}\sigma\|_0+\|D_{C, t,1}\sigma\|_0+\|D_{C, t,3}\sigma\|_0 \\
     &\leq \|D_{C, t}\sigma\|_0+\frac{C_{12}+C_{23}} {t}\|\sigma\|_0,
\end{align*}
 from which one gets $\|D_{C,t}\sigma\|_0 \geq C_{25}C_{20}\sqrt{t}\|\sigma\|_0-\frac{C_{12}+C_3} {t}\|\sigma\|_0$ which contradicts that $\sigma \in E_{C,t}(b)$ is an eigenspace of $D_{C,t}$ for $t$ large enough. Thus, one has 
\[\dim(E_{C,t}(b))=\dim E_{C,t}=\sum_{k} m_{k}+m_{k-1}=2\sum_{k} m_k\,. \] 
Moreover, $E_{C,t}$ is generated by $p_{C,t}(b)\,\rho_{p,i}(t)$. 
\end{proof}

Now to prove Theorem \ref{FCtdim}, for any integer $k$, such that $0\leq k \leq 2n+1$, let $Q_i$ denote the projection from $\textbf{H}^0(\tC(\omega))$ onto the $L^2$ completion of $\tC^k(\omega)$. Since $\Delta_{C, t}$ preserves the $\mathbb{Z}$ grading of $\Omega^*(M)$, for any eigenvector $\sigma$ of $D_{C,t}$ associated with an eigenvalue $\mu \in [-b,b]$ 
\[\Delta_{C,t}Q_k\sigma=Q_k\Delta_{C,t}\sigma=Q_k\mu^2\sigma=\mu^2Q_k\sigma\,.\] That is, $Q_k\sigma$ is an eigenform of $\Delta_{C,t}$ with eigenvalue $\mu^2$.  We thus need to show that $\dim Q_kE_{C,t}(b)=m_k+m_{k-1}$. To prove this, note that by Lemma \ref{PCtineq},
\[\|Q_{n_f(p)}p_{C,t}(b)\rho_{p,i}(t)-\rho_{p,i}(t)\|_0\leq \frac{C_{24}} {t}\,.\]
Thus, for $t$ sufficiently large, the cone forms $Q_{n_f(p)}p_{C,t}(b)\,\rho_{p,i}(t)$ are linearly independent. Therefore, for each $k$, we have
\[\dim Q_kE_{C,t}(b)\geq m_k+m_{k-1}\,. \]
However, we also have (as every element in $\textbf{H}^0(\tC(\omega))$ is a linear combination of $2n+1$ form) 
\[\sum_{k=0}^{2n+1} \dim Q_kE_{C,t}(b)=\sum_{k=0}^{2n+1} \dim E_{C,t}(b) = \sum_{k} m_{k}+m_{k-1}=2\sum_{k} m_k\,. \]
From this and $\dim Q_kE_{C,t}(b)\geq m_k+m_{k-1}$, we obtain 
$$\dim Q_kE_{C,t}(b)=m_k+m_{k-1}\,.$$


\section{Relation between the cone complex and the cone Morse complex}

For $(F_{C, t}^{[0,1]})_k$, the space of all eigenforms of $\Delta_{C,t}$ in $ \tC^k(\om)$ with eigenvalues in $[0,1]\,$, we point out that $d_{C,t}: (F^{[0,1]}_{C,t})_k \to (F^{[0,1]}_{C,t})_{k+1}$, since $[\Delta_{C,t}, d_{C,t}]=0\,$.  Hence,  $((F_{C, t}^{[0,1]})_\bullet, d_{C,t})$ is a cochain complex with cohomology, $H^k(F^{[0,1]}_{C, t})\cong H^k(\tC(\om))\,$. By Theorem \ref{FCtdim}, when $t$ is sufficiently large, we have
\begin{align}\label{Fctbound}
\dim\,(F^{[0,1]}_{C, t})_k=m_k+m_{k-1}\,,\qquad k=0,1, \ldots, 2n+1\,.
\end{align}
which is precisely equal to the dimension of the cone Morse complex $\tC^k(\com)=C^k(M,f) \oplus C^{k-1}(M, f)$ defined in Definition \ref{delCdef}.  As an immediate corollary, we have the bound
\begin{align*}
    b_k^\om=\dim H^k(\tC(\om))\leq m_k + m_{k-1}\,,\qquad k=0,1, \ldots, 2n+1\,. 
\end{align*}
However, this inequality does not give a sharp bound. 
Below, we shall proceed to prove the isomorphism of $H^k(\tC(\om))\cong H^k(\tC(\com))$ Theorem \ref{MIso}, which will allow us to derive the sharp Morse inequality bounds described in Theorem \ref{CMineq}.

\subsection{Quasi-isomorphism between $\tC(\om)$ and $\tC(\com)$}

In this subsection, we will prove Theorem \ref{MIso} by showing $H^k(\tC(\om))\cong H^k(\tC(\com))$. To do so, let us first briefly review the relationship between de Rham cohomology, $H^k_{dR}(M)$, and Morse cohomology, $H^k_{C(f)}(M)$.
Recall that there is a map $\mP$ that links the de Rham complex with the Morse complex \cite{BZ}.
\begin{definition}\label{mP}
  Define the map $\mP:\Omega^k(M) \to C^k(M,f)$ by   
  \[ \mP\phi= \sum_{p_k \in Crit(f)} \left( \int_{\overline{U}_{p_k}} \phi\right)p_k \]
where $\phi\in \Om^k(M)$ and  $U_p$ is the unstable submanifold consisting of gradient flow lines moving away from $p$.
\end{definition}
Importantly, $\mP$ is a chain map and induces an isomorphism on cohomology. 
\begin{theorem}
(\cite[Theorem 2.9]{BZ})\label{thm:mapP}
The map $\mP:\Omega^k(M) \to C^k(M,f)$ is a chain map, i.e. 
\begin{align}\label{Pchain}
\del\, \mP=\mP\, d\,,
\end{align}
and moreover, 
\begin{align}\label{Pisom}
[\mP]:H^k_{dR}(M) \to H^k_{C(f)}(M) \text{ is an isomorphism.} 
\end{align}
\end{theorem}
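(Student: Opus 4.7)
The plan is to establish the chain map property via Stokes' theorem on compactified unstable manifolds, and to derive the cohomology isomorphism via the Witten deformation framework already developed in the paper, reducing the statement to an invertibility claim for a finite matrix at the level of localized eigenforms.

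For the chain map property $\del \mP = \mP d$, the required input is the classical Morse-Smale description of the codimension-one boundary of the compactified unstable manifold: for $r \in Crit(f)$ with $ind(r) = k+1$,
\begin{equation*}
\del \overline{U}_r \;=\; \bigsqcup_{ind(p) = k} \CM(r, p) \times \overline{U}_p
\end{equation*}
(with appropriate orientations), where $\CM(r,p)$ is the zero-dimensional moduli of unparameterized gradient trajectories from $r$ down to $p$ and $n(r,p) := \#\CM(r,p)$ is exactly the coefficient of $r$ in $\del p$. For $\phi \in \Omega^k(M)$, Stokes' theorem then yields
\begin{equation*}
\int_{\overline{U}_r} d\phi \;=\; \int_{\del \overline{U}_r} \phi \;=\; \sum_{ind(p)=k} n(r,p) \int_{\overline{U}_p} \phi,
\end{equation*}
which is precisely the coefficient of $r$ in $\del \mP \phi$. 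Summing over $r$ gives $\del \mP = \mP d$.

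For the cohomology isomorphism, the strategy is to factor $\mP$ through the Witten deformation. Define the twisted map $\mP_t \phi := \mP(e^{tf}\phi)$; a direct computation using $d_t = e^{-tf} d\, e^{tf}$ shows $\del \mP_t = \mP_t d_t$, and the chain-level identity $\mP_t \circ e^{-tf} = \mP$ together with the isomorphism $e^{-tf}: (\Omega^\bullet, d) \xrightarrow{\sim} (\Omega^\bullet, d_t)$ shows that $\mP$ is a quasi-isomorphism if and only if $\mP_t$ is. For large $t$, the low-energy subcomplex $F^{[0,1]}_t \subset \Omega^\bullet(M)$ carries all of $H^\bullet(\Omega^\bullet, d_t)$ by Hodge theory, and the de Rham analogue of Theorem \ref{FCtdim} (proved by the same localization argument without the cone structure) gives $\dim (F^{[0,1]}_t)_k = m_k$. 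Evaluating $\mP_t$ on the localized Witten eigenforms $\zeta_{k,p} \sim e^{-t|x|^2/2}\, dx_1 \wedge \ldots \wedge dx_k$ from Lemma \ref{WittenH} produces a square matrix of integrals $\int_{\overline{U}_{p'}} e^{tf}\zeta_{k,p}$ indexed by pairs $(p', p)$ of index-$k$ critical points. The Morse-lemma form $f|_{U_p} = f(p) - \tfrac{1}{2}(x_1^2 + \ldots + x_k^2)$, together with a Gaussian computation in the compatible coordinate chart around $p$, shows that the diagonal entry is of order $e^{tf(p)} t^{-(n-k)/2}$, while off-diagonal entries are exponentially smaller in $t$. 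After a diagonal rescaling, this matrix tends to the identity as $t \to \infty$ and is therefore invertible for $t$ large, establishing the quasi-isomorphism.

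The principal analytic obstacle is controlling $\int_{\overline{U}_{p'}} e^{tf}\zeta_{k,p}$ globally on the compactified unstable manifold, not merely inside the compatible coordinate chart around $p$. This requires Agmon-type exponential decay estimates for the Witten eigenforms away from their localizing critical points, combined with the observation that $f$ attains its maximum on $\overline{U}_p$ at $p$ itself (since the gradient flow decreases $f$ along $U_p$), so that the Gaussian decay of $\zeta_{k,p}$ dominates the exponential growth of $e^{tf}$ outside a small neighborhood of $p$. With this analytic input in hand — which is precisely the content of the Helffer--Sj\"ostrand / Bismut--Zhang analysis — the invertibility of the matrix, and hence the isomorphism $[\mP]: H^k_{dR}(M) \xrightarrow{\sim} H^k_{C(f)}(M)$, follows by routine linear algebra.
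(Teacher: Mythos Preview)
Your proposal is correct and follows essentially the same route the paper sketches: the paper does not give its own proof of this theorem but cites \cite{BZ} and \cite{Zhang}, and the paragraph following the statement outlines precisely your strategy --- factor $\mP$ through $\mP_t = \mP\, e^{tf}$, restrict to the small-eigenvalue complex $F_t^{[0,1]}$ (whose dimension matches $m_k$), and show $\mP_t|_{F_t^{[0,1]}}$ is a cochain isomorphism for large $t$. Your Stokes argument for the chain-map identity and your Gaussian/Agmon analysis of the matrix $\big(\int_{\overline{U}_{p'}} e^{tf}\zeta_{k,p}\big)$ are exactly the ingredients of the Bismut--Zhang proof the paper is invoking.
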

The analytical Witten deformation proof of this theorem can be described by the following diagram:
\begin{equation}\label{DZhang}
\begin{tikzcd} 
(\Omega^\bullet(M), d)  \arrow[d,swap,"\mP"] & (\Omega^\bullet(M), d_{t}) \arrow[l, swap, "e^{tf}"] \arrow[ld,"\mP_{t}"] & ((F_{t}^{[0,1]})_\bullet, d_{t}) \arrow[l,swap, "\iota_{t}^{[0,1]}"]\arrow[lld,"\mP_{t}\big|_{F_t^{[0,1]}}
", shift left=.16cm] \\[2ex] 
(C^\bullet(M,f), \partial) &  & 
\end{tikzcd}
\end{equation}
where $(\Omega^\bullet(M), d_{t})$ is the Witten-deformed de Rham complex, and similar to $((F_{C, t}^{[0,1]})_\bullet, d_{C,t})$, $((F_{t}^{[0,1]})_\bullet, d_{t})$ is the cochain complex consisting of eigenforms of $\Delta_t$ in $\Om(M)$ with eigenvalues in $[0,1]$.  In the diagram, $\iota_t^{[0,1]}$ is the inclusion map, and also, the $\mP$ map induces the map 
\begin{align}
\mP_t=\mP \,e^{tf}: (\Om^k(M), d_t) \to (C^k(M,f),\partial)\,.\end{align} 
The proof of Theorem \ref{thm:mapP} involves showing that for $t$ large enough, $\mP_t\big|_{F_t^{[0,1]}}:((F_t^{[0,1]})_\bullet, d_t) \to (C^\bullet(M,f), \partial)$ is a cochain isomorphism \cite{Zhang}. This implies that the vertical map $\mP$ gives an isomorphism on cohomology since the cohomologies of $\Omega^\bullet(M)$ and $F_t^{[0,1]}$ are always identical, regardless of the value of $t$.
 
In considering $\tC(\com)$, let us first recall the definition of the map $c(\om):C^k(M,f) \to C^{k+2}(M,f)$ when acting on a critical point $p\in Crit(f)$ with index $n_f(p)=k$:
\begin{align}\label{commap}\displaystyle \com\, p = \sum_{q\in Crit(f)} \left(\int_{\overline{\CM(q,p)}} \om\right)\,q\:,
\end{align} 
where the sum is over critical points $q$ with index $n_f(q)=n_f(p)+2$ and ${\CM(q,p)}$ is the submanifold of flow lines from $q$ to $p$. It was shown in \cite[Section  3.5]{AB} and \cite[Lemma 4]{Viterbo}
that 
\begin{align}\label{Pomcoh}
[\mP] [\om]=[\com][\mP]\,,  
\end{align} 
that is, they are cohomologous as maps from $H^k_{dR}(M)$ to $H^{k+2}_{C(f)}(M)$.

In the following, we assume that $t$ is sufficiently large such that $\mP_t = \mP e^{tf}$ is an isomorphism between $(F_t^{[0,1]})_k$ and $C^k(M, f)$. 
By \eqref{Pomcoh}, we have that 
\begin{align*}
[\mP_t] [\om] = [\com][\mP_t]  
\end{align*}
on cohomology.
This motivates us to introduce an $\om \w$ type map on $F_t^{[0,1]}$.
\begin{definition}\label{omtilde} 
For $t$ sufficiently large, we define the map $\tilde{\omega}_t: (F_t^{[0,1]})_k \to (F_t^{[0,1]})_{k+2}$  by  \[\tilde{\omega}_t=\mP_t^{-1}\com \mP_t. \]
\end{definition} 

Since $d_t = e^{-tf} d\, e^{tf}$, we can extend $\mP\,d = \partial\, \mP$ of  \eqref{Pchain} to  
\begin{align}\label{Ptdt}
\mP_t\,d_t=\partial\, \mP_t\,.   
\end{align}  
Applying $\mP_t^{-1}$ on both the left and the right of \eqref{Ptdt} gives  $d_t\mP_t^{-1}=\mP_t^{-1}\partial$ acting on $C(M,f)$. 
Using the commutativity of $\com$ with $\partial$, i.e. $\partial \com = \com \partial$ in \eqref{MLeibniz}, it is straightforward to check that $\tilde{\omega}$ is a chain map:
\begin{align*}
 \tilde{\omega}_td_t&=\mP_t^{-1}\com \mP_td_t \\
&=\mP_t^{-1}\com \partial \mP_t \\
&=\mP_t^{-1}\partial\com\mP_t \\
&=d_t\,\mP_t^{-1}\com \mP_t = d_t\tilde{\omega}_t.
\end{align*}
The induced map on the  cohomology
\begin{align}\label{wtcr}
[\tilde{\omega}_t]=[\mP_t^{-1}][\com][\mP_t]=[\mP_t^{-1}][\mP][\omega \wedge][\mP]^{-1}[\mP_t]\,,
\end{align}
is thus conjugate to the wedge product map $\omega \wedge $. With $\tilde{\omega}_t$, we can use it to define the following cone complex: 
\begin{align}\label{wtcmplx}
\tC^k(\tilde{\omega}_t)=(F_{t}^{[0,1]})_k\oplus (F_{t}^{[0,1]})_{k-1}\,, \qquad\tilde{d}_{C,t}=\begin{pmatrix}d_t & \tilde{\omega}_t \\ 0 & -d_t \end{pmatrix}\,.
\end{align}
The cohomology of this cone Morse complex $(\tC^\bullet(\tom), \td_{t})$ can be expressed in terms of the cokernels and kernels of the $\tom$ map in the following way:
\begin{align}\label{wtckk}
H^k(\tC(\tom))(M) &\cong \coker\left[[\tom]:H^{k-2}(F_t^{[0,1]}) \to H^{k}(F_t^{[0,1]})\right]\nonumber \\ &\qquad \oplus \ker\left[[\tom]: H^{k-1}(F_t^{[0,1]}) \to H^{k+1}(F_t^{[0,1]})\right]\,.
\end{align}  
This relation follows from the fact that the cone complex with elements $\tC^k(\tom)$ sits in a short exact sequence of chain complexes
\begin{equation}\label{wtses}
\begin{tikzcd}
0  \arrow[r] & ((F_t^{[0,1]})_k, \td_t) \arrow[r, "\iota"] & (\tC^k(\tom), \td_{C,t}) \arrow[r, "\pi"] & ((F_t^{[0,1]})_{k-1}, -\td_t) \arrow[r]& 0 
\end{tikzcd}
\end{equation}
where $\iota$ is the inclusion map and $\pi$ is the projection onto the second component.  The short exact sequence implies the following long exact sequence of cohomologies
\begin{equation}\label{wtHes}
\begin{tikzcd} 
\ldots \arrow[r, ] &
 H^{k-2}(F_t^{[0,1]}) \mspace{-1mu}\arrow[r, "\text{[}\tom\text{]}"] & H^{k}(F_t^{[0,1]}) \arrow[draw=none]{d}[name=X, anchor=center]{} \arrow[r, "\text{[}\iota\text{]}"] & H^{k}(\text{Cone}(\tom)) \ar[rounded corners,
            to path={ -- ([xshift=2ex]\tikztostart.east)
                      |- (X.center) \tikztonodes
                      -| ([xshift=-2ex]\tikztotarget.west)
                      -- (\tikztotarget)}]{dll}[at end, swap]{ \text{[}\pi\text{]~~~~~~~~~~~~~~~~~~~~~~~~~~~~~~~~~~~~~~~~~~~~~~~~~~~~~~~~~~~~~~~~~~~~~~~~~~~~~~~~~~~~~~~~~~~~~}}& \left. \right. \\ & H^{k-1}(F_t^{[0,1]}) 
\arrow[r, "\text{[}\tom\text{]}"] & H^{k+1}(F_t^{[0,1]})\arrow[r, ] & \ldots \text{~~~~~~~~~~~~~~}
\end{tikzcd}
\end{equation}
which implies \eqref{wtckk}.

We now point out two important properties of $H^k(\tC(\tom))$ when $t$ is sufficiently large such that $\mP_t$ is an isomorphism. 
\begin{itemize}
\item[(i)] $H^k(\tC(\tom))\cong H^k(\tC(\com))$.  By construction, for $t$ sufficiently large, the complex $(\tC^\bullet(\tom), \td_{C,t})$ is isomorphic to the cone Morse complex $(\tC^\bullet(\com), \del_C)$. Hence, their cohomologies must be isomorphic.
\item[(ii)]$H^k(\tC(\tom))\cong H^k(\tC(\om))$.  Being both cone cohomologies, both $H^k(\tC(\tom))$ and $H^k(\tC(\om))$ can be expressed in terms of the cokernels and kernels, of the $[\tom]$ map \eqref{omckk} and the $[\om]$ map \eqref{wtckk}, respectively.  Moreover, since $H^k(F_t^{[0.1]})\cong H^k_{dR}(M)$ and also $[\tom]$ and $[\om]$ have equivalent action on the cohomology level by \eqref{wtcr}, the two cohomologies are isomorphic. 
\end{itemize}
Together, they imply the desired isomorphism that $ H^k(\tC(\com))\cong H^k(\tC(\om))\cong PH^k(M,\om)$ for $k=0, 1, \ldots, 2n+1\,$.  And this proves Theorem \ref{MIso}.

\subsection{Cone Morse inequalities}

Having proved $H(\tC(\om))\cong H(\tC(\com))$, we now proceed to derive Morse-type bounds for $b_k^\om = \dim PH^k(M,\om)= \dim H^k(\tC(\om))$.

To do so, we note that the cohomology of the cone Morse complex $(\tC^\bullet(\com), \partial_C)$, like any cone cohomology, can be expressed in terms of cokernels and kernels of the $\com$ map:
\begin{align}\label{MCckk}
H^k(\tC(\com))(M) &\cong \coker\left[[\com]:H_{C(f)}^{k-2}(M) \to H_{C(f)}^{k}(M)\right]\nonumber \\ &\qquad \oplus \ker\left[[\com]: H_{C(f)}^{k-1}(M) \to H_{C(f)}^{k+1}(M)\right]\,
\end{align}
where $H_{C(f)}(M)$ is the cohomology of the standard Morse cochain complex $(C^\bullet(M,f), \partial)$.  This relation can be derived similarly as that for $H^k(\tC(\tom))$ in \eqref{wtckk} by means of short exact sequence of chain complexes \eqref{wtses} resulting in a long exact sequence of cohomologies \eqref{wtHes}.

Since the dimensions of the cohomology of the Morse complex are given by the Betti numbers, i.e. $b_k =\dim H^k_{C(f)}(M)$, it follows from the isomorphism $H^k(\tC(\om))\cong H^k(\tC(\com))$ 
and \eqref{MCckk} that
\begin{align}\label{bok0}
b_k^\om = \dim H^k(\tC(\om))= b_k - r_{k-2} + b_{k-1}-r_{k-1}
\end{align}
where 
\begin{align}  \label{rkdef}  
r_k &= \rk \left( [\com]: H^{k}_{C(f)}(M) \to  H^{k+2}_{C(f)}(M)\right) \\
&=\rk\left( [\om]: H^{k}_{dR}(M) \to  H^{k+2}_{dR}(M)\right) \nonumber
\end{align} 
is the rank of the $\com$ map on $H^k_{C(f)}(M)$, or equivalently, by \eqref{Pisom} and \eqref{Pomcoh}, the rank of the $\om$ map on $H^k_{dR}(M)$ as expressed in the second line of \eqref{rkdef}.

We recall the standard Morse inequalities bound for the Betti numbers in terms of the critical points of a Morse function $f$:
\begin{align}\label{cmi1}
(weak~Morse~inequalities)\:~\qquad\qquad\qquad~  b_k&\leq m_k\,,\qquad\qquad~~\: k=0,1,\ldots, 2n\,,\\
(strong~Morse~inequalities) \qquad~\sum_{i=0}^k (-1)^i b_k &\leq \sum_{i=0}^k (-1)^i m_k\,,\quad k=0,1,\ldots, 2n\,,\label{cmi11}
\end{align} 
where $m_k$ is the number of index $k$ critical points of $f$.  But since the $b_k^\om$'s can vary with the symplectic structure, we should expect that any sharp inequality bound of $b_k^\om$ should have dependence on the symplectic structure as well.  Hence, we introduce the rank of the map $\com$ on $C^k(M,f)$.
\begin{align}
v_k=\text{rank}\left(\com:C^k(M,f) \to C^{k+2}(M,f)\right)\,.
\end{align}
Notice that $v_k$ differs from $r_k$ with $r_k$ being the rank of the $\com$ map acting on the cohomology, $H^{k}_{C(f)}(M)$, and not on the cochain, $C^k(M,f)$.  However, since both the cochain and cohomology are generated by critical points, it is evident that 
\begin{align}\label{cmi2}
r_k &\leq v_k\,, \qquad\quad~ k=0, 1, \ldots, 2n-2\,,\\
r_k&= v_k=0\,, ~\,\quad k=2n-1, 2n\,.\label{cmi22}
\end{align}
Below is another key property: 
\begin{prop}\label{brmv}
Let $(M^{2n}, \om, f, g)$ be a closed symplectic manifold with the Morse function and the Riemannian metric satisfying the Morse-Smale transversality condition.  Then, we have the following inequality: 
\begin{align}\label{propeq}
b_k - r_{k-1} \leq m_k - v_{k-1}\,,\qquad k=0,1, \ldots, 2n+1\,.
\end{align} 
\end{prop}
\begin{proof}
For $k=2n+1$, each term in \eqref{propeq} vanishes and the inequality is trivial. For $k=2n$, with $r_{2n-1}=v_{2n-1}=0$ as noted in \eqref{cmi22}, the inequality is just the weak Morse inequality $b_{2n}\leq m_{2n}$.  In the remainder of the proof, we will only need to concern with the case $k=0, \dots, 2n-1$.  

Let us note that $C^k(M,f)$ is a cochain vector space over $\mathbb{R}$ and is finitely-generated by the critical points of index $k$.  Equipped with the differential $\del$, we can decompose $C^k(M,f)$ as follows: 
\begin{align*}
C^k = \del A^{k-1} \oplus B^k \oplus A^k\,,
\end{align*}
where $A^k= \frac{C^k}{\ker \del \cap C^k}$ is the space of cochains modulo cocycles, $\del A^{k-1}$ is the coboundary space, and $B^k$ is the space of cocycles modulo coboundaries (i.e. the cohomology). In this notation, the Betti number, $b_k =\dim B^k$.  And if we let $a_k = \dim A^k$, then $\dim \del A^{k-1}=a_{k-1}$, since $\del$ is an injective map on $A^{k-1}$.  Hence, 
\begin{align}\label{mkdecomp}
m_k = \dim C^k = \dim \del A^{k-1} +\dim B^k + \dim A^k =a_{k-1}+b_k+a_k\,.  
\end{align}
We consider the map $\com: C^{k-1} \to C^{k+1}$.  Expressing the action of $\com$ on each component of $C^{k-1} = \del A^{k-2} \oplus B^{k-1} \oplus A^{k-1}$, we observe that
\begin{align*}
&\text{(i)} ~\com \, \del A^{k-2} \,\subseteq\, \del A^k\\
&\text{(ii)} ~\com \, B^{k-1} \,\, \subseteq\, \del A^k \oplus B^{k+1}\\
&\text{(iii)}~ \com \, A^{k-1} \subseteq\, \del A^k \oplus B^{k+1} \oplus A^{k+1} = C^{k+1}
\end{align*}
having noted the property that $\com$ commutes with the differential $\del$ as in \eqref{MLeibniz}, and hence, $\com$ maps coboundaries to coboundaries (i), and also cocycles to cocycles (ii).  This can be expressed as a matrix operator 
\begin{align*}
\com 
\begin{bmatrix} \del A^{k-2} \\ B^{k-1} \\ A^{k-1} \end{bmatrix}
=
\begin{bmatrix}
R_{11}& R_{12} & R_{13} \\
O & R_{22} & R_{23} \\
O & O & R_{33}   \\
\end{bmatrix}
\begin{bmatrix} \del A^{k-2} \\ B^{k-1} \\ A^{k-1} \end{bmatrix}
\subseteq
\begin{bmatrix} \del A^k \\ B^{k+1} \\ A^{k+1} \end{bmatrix}
\end{align*}
Consider now $v_{k-1}$ which is the rank of the above matrix.  Note first that $r_{k-1} = \text{rank} ([\com]: B^{k-1} \to B^{k+1})$, and therefore, $r_{k-1}$ is the rank of $R_{22}$. Now the upper left block submatrix 
\[ \begin{bmatrix} R_{11} & R_{12} \\ O & R_{22}\end{bmatrix} \]
must have a rank that is greater than or equal to the rank of $R_{22}$.  So let  
\[\text{rank}\left( \begin{bmatrix} R_{11} & R_{12} \\ O & R_{22}\end{bmatrix}\right)=r_{k-1}+u_{k-1}\,,\quad \text{with~} u_{k-1} \geq 0\,. \] 
In particular, since there is a zero matrix in the lower left corner, only $R_{11}$ and $R_{12}$ can make $u_{k-1}$ greater than zero. However, both $R_{11}, R_{12}$ are in the first block-row, and thus can not contribute a rank greater than the size of the block-row, which is the dimension of $\del A^k$. Thus, we have the bound $0 \leq u_{k-1} \leq a_k$. Now $v_{k-1}$ is the rank of the whole matrix, which can not be less than $r_{k-1}+u_{k-1}$, so let 
\begin{align}\label{vkmo}
v_{k-1}=\text{rank}\left( \begin{bmatrix} R_{11} & R_{12} & R_{13} \\ O & R_{22} & R_{23} \\ O & O & R_{33}\end{bmatrix}\right)=r_{k-1}+u_{k-1}+t_{k-1}, \text{ with $t_{k-1} \geq 0$}
\end{align}
Again, the zero matrices in the third row mean that only $R_{13}, R_{23}$ and $R_{33}$ can make $t_{k-1}$ greater than zero. However, since these are in the third block-column, they cannot contribute a rank greater than the size of the block-column, which is the dimension of $A^{k-1}$, and we obtain the bound $0 \leq t_{k-1} \leq a_{k-1}$. Finally, combining \eqref{mkdecomp} and \eqref{vkmo}, we obtain 
\begin{align*}
m_k-v_{k-1}
&=(a_{k-1}+b_k+a_k) - (r_{k-1}+u_{k-1}+t_{k-1})\\
&=(b_k-r_{k-1})+(a_{k}-u_{k-1})+(a_{k-1}-t_{k-1})\\
&\geq b_k-r_{k-1}
\end{align*}
since the last two terms of the second line are both nonnegative. 
\end{proof}
Proposition \ref{brmv} above leads us to the desired cone Morse inequalities.
\begin{theorem}
Let $(M, \om, f, g)$ be a closed, symplectic manifold with the Morse function $f$ and the Riemannian metric $g$ satisfying the Morse-Smale transversality condition.  Then, we have the following inequalities for the dimensions $b_k^\om=\dim PH^k(M,\om)=\dim H^k(\tC)$: \\
(A) Weak cone Morse inequalities:
\begin{align}\label{wcMI3}
b_k^\om \;\leq \;m_k -v_{k-2} + m_{k-1} - v_{k-1}\,, \qquad k=0, 1, \ldots, 2n+1\,;
\end{align}
(B) Strong cone Morse inequalities:
\begin{align}\label{scMI3}
\sum_{i=0}^k\ (-1)^{k-i}\,b^\om_i \;
\leq \; m_k - v_{k-1}\,, \qquad\quad\: k=0, 1, \ldots, 2n+1\, .
\end{align}
\end{theorem}
\begin{proof}
We first prove the strong inequality which follows directly from the isomorphism, $H^k(\tC(\om))\cong H^k(\tC(\com))$, and Proposition \ref{brmv}. Specifically, the isomorphism implies the expression in \eqref{bok0}
\begin{align*}
b_i^\om = m_i - r_{i-2} - m_{i-1} - r_{i-1}\,.
\end{align*}
This gives a telescoping sum
\begin{align}
\sum_{i=0}^k\ (-1)^{k-i}\,b^\om_i  &=\sum_{i=0}^k \,(-1)^{k-i}\left(b_i - r_{i-2} + b_{i-1}-r_{i-1}\right)\nonumber\\
&= b_k - r_{k-1} \leq \; m_k - v_{k-1}\label{scMx} 
\end{align}
having applied the inequality of \eqref{propeq} which results in the desired strong cone Morse inequality.  

As for the weak inequality, it can be derived directly from the strong cone Morse equalities, or equivalently, from \eqref{propeq} with \eqref{bok0}.  Let us write out the $k$-th and $(k-1)$-th inequalities of \eqref{propeq},  
\[b_k - r_{k-1} \leq \; m_k - v_{k-1}\,,\qquad  \text{ and }\qquad b_{k-1} - r_{k-2} \leq \; m_{k-1} - v_{k-2}\,.\] 
Adding these two inequalities together and using \eqref{bok0} gives the desired weak cone Morse inequality
\begin{align}\label{wcMx}  
b_k^\om = b_k - r_{k-2} + b_{k-1}-r_{k-1} \leq m_k -v_{k-2} + m_{k-1} - v_{k-1}\,.
\end{align}
\end{proof}

Finally, let us consider the case when $f$ is a perfect Morse function.  By definition, a perfect Morse function implies $b_k=m_k$ for all values of $k$.  This means that $\dim H^k_{C(f)} = \dim C^k(M,f)$, and in particular, the Morse differential $\partial$ acts by zero on all generators of $C^*(M,f)$.  Clearly then,  when $f$ is perfect, we have both $b_k=m_k$ and also $r_k=v_k$.  We can therefore conclude that the weak cone Morse inequalities as expressed in  \eqref{wcMx} and the strong Morse inequalities as in \eqref{scMx} would both become equalities when $f$ is a perfect Morse function.  

Altogether, the weak and strong cone Morse inequalities and that they become equalities when $f$ is perfect are the statements of Theorem \ref{CMineq}. We have thus completed the proof of Theorem \ref{CMineq}.

\section{Examples}
In this section, we will consider on certain symplectic manifolds the cone Morse complex and check the cone Morse inequalities derived in the previous section:
\begin{align}\label{wcMI}
(weak)\quad\qquad\qquad \qquad\qquad 
b_k^{\om} \, &\leq \,
m_k-v_{k-2}+m_{k-1}-v_{k-1}\,, 
\\
\label{scMI}
(strong)\qquad \qquad 
\sum_{i=0}^k (-1)^{k-i}b^\om_i \,
&\leq 
\,m_k-v_{k-1}\,,
\end{align}    
for $k=0, 1, \ldots, 2n+1\,$.  In the first examples that we shall consider, the symplectic manifolds are K\"ahler. Due to the hard Lefschetz property, the wedge product map $[\om^j]:H^{n-j}_{dR}(M) \to H^{n+j}_{dR}(M)$ for $j=1, 2,  \ldots, n$, is an isomorphism.  
This implies in particular for $\txw_k = \text{rank } [\om] |_{H^k_{dR}(M)}\,$, that $\txw_k= \min(b_k, b_{k+2})$. It thus follows from the relation \cite{TTY, TT}
\begin{align}\label{bpsik}
H^k(\tC(\om)) \cong \coker\left[\om:H_{dR}^{k-2}(M) \to H_{dR}^{k}(M)\right] \oplus \ker\left[\om: H_{dR}^{k-1}(M) \to H_{dR}^{k+1}(M)\right]\,
\end{align}  that
\begin{align}\label{Kbom}
b^{\om}_k=\dim H^k(\tC(\om))=
   \begin{cases}
b_k-b_{k-2} & 0 \leq k \leq n\,, \\ 
b_{k-1}-b_{k+1}\ &  n +1 \leq k \leq {2n+1}\,,\\
\end{cases}
\end{align}
which are determined solely by the Betti numbers and do not vary with the symplectic structure.  This is special to K\"ahler symplectic manifolds, as generally, the $b^\om_k$'s can vary with the class $[\om]\in H^2_{dR}(M)$ (for explicit examples, see \cite{TY2, TW}).

\begin{rmk}In the special case where the Morse function $f$ and Riemannian metric $g$ are chosen such that $\com^k:C^{n-k}(M,f) \to C^{n+k}(M,f)$ is bijective, mirroring the Lefschetz property but on the level of the Morse cochains, then $v_k = \text{rank } c(\omega)=\min(m_k, m_{k+2})$. The weak cone Morse inequalities would then also be analogous to \eqref{Kbom}
\begin{align}\label{KmIneq}
b_k^\om \leq \begin{cases}
m_k-m_{k-2} & 0 \leq k \leq n\,, \\ 
m_{k-1}-m_{k+1} & n +1 \leq k \leq {2n+1}\,.
\end{cases}
\end{align}
While this does not hold generally, it always occurs when using a perfect Morse function manifolds on K\"ahler manifolds, which is the setting of our two K\"ahler examples below.
\end{rmk}

\begin{ex}
Consider $(\mathbb{CP}^n, \omega_{FS}=\frac{i} {2} \del \overline{\del}\log |z_i|^2)$, the complex $n$-dimensional projective space equipped with the Fubini-Study metric as the K\"{a}hler structure.  It is well-known to have a perfect Morse function that can be expressed as  
\[f([z_0, \ldots, z_{n}])=\frac{\sum \lambda_i |z_i|^2} {\sum |z_i|^2}\] 
such that $\lambda_i \neq \lambda_j$ for $i \neq j$. If we consider $\lambda_0<\lambda_1<\ldots<\lambda_n$, then the critical points are $p_{2j}=[0...:1:...0]$ with $1$ only in the $j$-th position and index $n_f(p_{2j})=2j$. Because the index of all the critical points are even, the Morse differential $\partial: C^k(M, f) \to C^{k+1}(M, f)$ necessarily vanishes for all $k=0,\ldots,2n$. Thus, $f$ is no doubt a perfect Morse function and  
\begin{align}\label{cpnm}
m_k = b_k(\mathbb{CP}^n) =
\begin{cases}
~1 \qquad & 0\leq k\leq 2n\,,~ k~even\,,\\
~0 & otherwise\,.
\end{cases}
\end{align}
Regarding the cone Morse differential $\partial_C=\begin{pmatrix} \partial & c(\om_{FS}) \\ 0 & - \partial \end{pmatrix}$, it has a non-zero component coming from the $c(\om_{FS})$ map.  We note that  
$$\CM(p_{2j}, p_{2j+2})=\{[0:\ldots:z_j:z_{j+1}:\ldots:0]: (z_j, 
z_{j+1}) \in \mathbb{C}^2 \setminus \{0\}\}$$ 
is isomorphic to $\mathbb{CP}^1$. 
Therefore, 

\begin{align*}
c(\omega_{FS})p_{2j}=\left(\int_{\mathbb{CP}^1} \omega_{FS}\right)p_{2j+2}=  \pi\, p_{2j+2}\,,
\end{align*}
and thus,
\begin{align}\label{vval}
v_{2j}=\rk c(\omega_{FS})|_{C^{2j}(
f)}=1\,, \qquad j=0, \ldots, n-1\,,  
\end{align} 
which is the same as $\txw_{2j}=\rk \om |_{H^{2j}(\mathbb{CP}^n)}$.
The cone Morse complex and cohomology can then be easily computed and we find 
\begin{align}\label{cpns}
b^\om_k= 
\begin{cases} 
~1  \qquad&  k=0, 2n+1\,,   \\
~0\qquad&  otherwise\,,
\end{cases} 
\end{align}
which agrees exactly with the expectation from \eqref{Kbom}.

The cone Morse inequalities \eqref{wcMI}-\eqref{scMI} can similarly be straightforwardly checked using \eqref{cpnm}-\eqref{vval}, and they are, in fact, equalities, as would be expected for a perfect Morse function.  
\end{ex}

\begin{ex}
Consider ($T^4=\mathbb{R}^4/\mathbb{Z}^4$, $\om=dx_1\w dx_2 + dx_3 \w dx_4$), the four-torus described using Euclidean coordinates, $x_i$ with identification $x_i \sim x_i + 1$ for $i=1,2,3,4$.  For this example, we will compute the $\tC(c(\om))$ complex with respect to the flat metric, $g=\sum dx_i^2$, and the Morse function is taken to be 
\begin{align}\label{t4f}
f=2-\frac{1}{2}\sum_{i=1}^4\cos(2\pi x_i)\,.
\end{align}
This Morse function has several desirable properties that are straightforward to prove:
\begin{itemize}
\item[(i)] the non-degenerate critical points are located at $x_i=[0]$ or $x_i=[\tf]$ and have Morse index equal to the number of coordinates which are equal to $[\tf]$;
\item[(ii)] the number of critical points of index $k$, $m_k=b_k(T^4)$ for all $k$.  Hence, $f$ is perfect and the  Morse differential $\del$ acts by zero; 
\item[(iii)] the pair $(f,g)$ satisfies Smale transversality. 
\end{itemize}
Because of (ii), the $\del_C$ map 
reduces to the $c(\om)$ map.  Hence, we are interested in pairs of critical points whose indices differ by two, e.g. $q_{k+1}$ has two more $[\tf]$ coordinates than $q_{k-1}$. Also, note that $\CM(q_{k+1},q_{k-1})$ will be a two-dimensional face with two of the coordinates fixed and two coordinates spanning the entire coordinate interval $[0,1]$ when we take the closure. 
\begin{table}[h]
    \centering
    $\left.\mspace{-43mu} \right.$\begin{tabular}{|c|c|c|c|}
     \hline   $k$ & $0$ & $1$ & $2$ \\
     \hline   $H^k(\tC(\omega))$ & $\begin{pmatrix} 1 \\ 0\end{pmatrix}$ &  $\begin{pmatrix}dx_1 \\ 0\end{pmatrix}, \begin{pmatrix}dx_2 \\ 0\end{pmatrix}$ & $\begin{pmatrix}dx_{13} \\ 0\end{pmatrix}, \begin{pmatrix}dx_{14} \\ 0\end{pmatrix}, \begin{pmatrix}dx_{23} \\ 0\end{pmatrix}, $ \\
       & & $\begin{pmatrix}dx_3 \\ 0\end{pmatrix}, \begin{pmatrix}dx_4 \\ 0\end{pmatrix}$ & $\begin{pmatrix}dx_{24} \\ 0\end{pmatrix}, \begin{pmatrix}dx_{12}-dx_{34} \\ 0\end{pmatrix}$ \\ 
       \hline   $H^k(\tC(\com))$ & $\begin{pmatrix} q_0 \\ 0\end{pmatrix}$ &  $\begin{pmatrix}q_1 \\ 0\end{pmatrix}, \begin{pmatrix}q_2 \\ 0\end{pmatrix}$ & $\begin{pmatrix}q_{13} \\ 0\end{pmatrix}, \begin{pmatrix}q_{14} \\ 0\end{pmatrix}, \begin{pmatrix}q_{23} \\ 0\end{pmatrix}, $ \\
       & & $\begin{pmatrix}q_3 \\ 0\end{pmatrix}, \begin{pmatrix}q_4 \\ 0\end{pmatrix}$ & $\begin{pmatrix}q_{24} \\ 0\end{pmatrix}, \begin{pmatrix}q_{12}-q_{34} \\ 0\end{pmatrix}$ \\
       \hline
    \end{tabular}
    $\left.\mspace{38mu} \right.$\begin{tabular}{|c|c|c|c|}
     \hline   $k$ & $3$ & $4$ & $5$ \\
     \hline   $H^k(\tC(\omega))$ & $ \begin{pmatrix}dx_{123} \\ 0\end{pmatrix}, \begin{pmatrix}dx_{124} \\ 0\end{pmatrix}, \begin{pmatrix}dx_{234} \\ 0\end{pmatrix},$ & $\begin{pmatrix}0 \\ dx_{123}\end{pmatrix}, \begin{pmatrix}0 \\ dx_{124} \end{pmatrix}$ & $\begin{pmatrix}0 \\ dx_{1234}\end{pmatrix}$   \\
       & $\begin{pmatrix}dx_{234} \\ 0\end{pmatrix}, \begin{pmatrix}0 \\ dx_{12} - dx_{34}\end{pmatrix}$ & $\begin{pmatrix}0 \\ dx_{134} \end{pmatrix}, \begin{pmatrix}0 \\ dx_{234}\end{pmatrix}$ &  \\
       \hline   $H^k(\tC(\com))$ & $ \begin{pmatrix}q_{123} \\ 0\end{pmatrix}, \begin{pmatrix}q_{124} \\ 0\end{pmatrix}, \begin{pmatrix}q_{234} \\ 0\end{pmatrix},$ & $\begin{pmatrix}0 \\ q_{123}\end{pmatrix}, \begin{pmatrix}0 \\ q_{124} \end{pmatrix}$ & $\begin{pmatrix}0 \\ q_{1234}\end{pmatrix}$   \\
       & $\begin{pmatrix}q_{234} \\ 0\end{pmatrix}, \begin{pmatrix}0 \\ q_{12} - q_{34}\end{pmatrix}$ & $\begin{pmatrix}0 \\ q_{134} \end{pmatrix}, \begin{pmatrix}0 \\ q_{234}\end{pmatrix}$ &  \\
       \hline
    \end{tabular}
    \caption{Cohomology of Cone$(\omega)$ versus Cone$(\com)$ on $(T^4,\om = dx_1 \w dx_2 +dx_3 \w dx_4)$.}
    \label{CohTab}
\end{table}
In Table \ref{CohTab}, we give the cohomologies of $H(\tC(c(\omega)))$ and $H(\tC(\omega))$. 
We use a multi-index notation of $I=\{i_1...i_j\}$ in increasing order such that $dx_{I}=dx_{i_1}\wedge ... \wedge dx_{i_j}$, $q_0$ denotes the index 0 point, and $q_{I}$ denotes the point with $\tf$ in entry $i_1, ... i_j$, i.e. $q_{13}=q_{\left[\tf,0, \tf, 0\right]}$.  The orientation of the submanifolds are chosen such that $\mP dx_{I}=q_{I}$. (c.f. Definition \ref{mP}.)  

Notice that $\com q_{I}$ only picks out critical points  that have two coordinates of $q_{I}$ changed from $[0]$ to $[\tf]$ in either the 1-2  or  3-4 directions. Thus, we find that 
\begin{align*}
\com q_{0}&=q_{12}+q_{34}\,, & \com q_{12}&=q_{1234}\,,&
\com q_{34}&=q_{1234}\,,& &
\\
\com q_{1}&=q_{134}\,,& 
\com q_{2}&=q_{234}\,, &
\com q_{3}&=q_{123}\,,&
\com q_{4}&=q_{124}\,,
 \end{align*} 
with all other critical points mapped to zero when acted upon by $\com$. 

It is straightforward to see from above that $v_k = r_k$ and  that cone Morse inequalities give the equalities $b^\om_k = m_k - v_{k-2}-m_{k-1}-v_{k-1}\,$ for $0\leq k\leq 5\,$.  This is as expected with $f$ in \eqref{t4f} being a perfect Morse function.

\end{ex}

\medskip

Next, we consider a non-K\"ahler symplectic manifold where the hard Lefschetz property does not hold. 
\begin{ex}
Let $(M, \om)$ be the six-dimensional, closed, symplectic manifold constructed by Cho in \cite{Cho} where the symplectic form $\om$ is not hard Lefschetz type.   Topologically, $M$ can be described as a two-sphere bundle over a projective $K3$ surface and also has the following properties \cite[Theorem 1.3]{Cho}: (i) $M$ is simply-connected; (ii) the odd degree cohomologies vanish, i.e. $H_{dR}^{1}(M)=H_{dR}^3(M)=H_{dR}^5(M)=0\,$. 

Consider the cohomology $PH(M, \om)\cong H(\tC(\om))$.  From \eqref{bpsik}, we find
\begin{align*}
b^\om_0&= b^\om_7=1\,,\\
b^\om_1&=b^\om_6=0\,,\\
b^\om_2&=b^\om_5=b_2(X)-1\,,\\
b^\om_3&=\dim\left[\ker\left(\om: H^2(X)\to H^4(X)\right)\right]>0\,,\\
b^\om_4&=\dim\left[{\rm coker}\left(\om: H^2(X) \to H^4(X)\right)\right]>0\,.
\end{align*}
Note that $b^\om_3=b^\om_4 > 0$ since $(M, \om)$ is not hard Lefschetz, which implies that the map, $\om: H_{dR}^2(M) \to H_{dR}^4(M)$, can not be an isomorphism.

For the cone Morse complex and inequalities, we can again choose to work with a perfect Morse function on $M$.  That such exists is due to a a result of Smale \cite[Theorem 6.3]{Smale} which states that any simply-connected manifold of dimension greater than five that has no homology torsion has a perfect Morse function.  (No homology torsion here can be seen from applying the Gysin sequence to $M$ as a two-sphere bundle over $K3$.)  Since $M$ has trivial odd-degree cohomology, this implies that 
\begin{align*}
m_0&=m_6=1\,,\\
m_1&=m_3=m_5=0\,,\\
m_2&=m_4=b_2(M)\,.
\end{align*}
It is straightforward to check that the bounds \eqref{wcMI}-\eqref{scMI} are satisfied.  In particular, for the weak cone Morse bound of $\eqref{wcMI}$, the $k=3,4$ case corresponds to
\begin{align*}
b^\om_3&\leq m_3 + m_2 -v_2 = m_2 - v_2\,,\\
b^\om_4&\leq m_4 + m_3 -v_2 = m_4 - v_2\,,
\end{align*}
The above demonstrates the necessity of having both the $m_k$ and the $m_{k-1}$ term in the symplectic cone Morse inequalities.
\end{ex}

\begin{rmk}\label{Sbound}
We comment that there is a preprint \cite{Machon} that presents some symplectic Morse-type inequalities which are different from those here and actually not valid generally.  For instance, the inequality in \cite[Corollary 3]{Machon} can be expressed in our notation as $\dim F^pH^{n+p+1}(M,\om) \leq m_{n-p}$, which is not satisfied in the above Cho's non-K\"ahler six-dimensional example $(M,\om)$ for a perfect Morse function.  Specifically, it gives for $p=0$ case the inequality relation, $b^\om_4\leq m_3 =0\,,$ which is inconsistent with $b^\om_4>0$ with $\om$ being of non-hard Lefschetz type.
\end{rmk}

\section{Discussion}

Thus far, in this paper, we have for simplicity focused on the $p=0$ case of the TTY cohomologies, $F^pH(M, \om) = H(\tC(\om^{p+1}))$.  
Let us comment in this final section the $p>0$ case and lay out the results which generalize the $p=0$ case.  The cone Morse theory in the $p>0$ case can be considered analytically similar to the computations in this paper though the calculations are more involved.  In general, the TTY cohomologies for $p=0, 1, \ldots, n-1$ algebraically correspond to   \cite{TTY,TT}:
\begin{align}\label{kerco}
F^pH^k(M, \om)\cong 
H^k(\tC(\omega^{p+1}))&\cong 
\coker\left([\omega^{p+1}]:H_{dR}^{k-2p-2}(M) \to H_{dR}^{k}(M) \right)
 \nonumber \\
& \quad\quad  \oplus \,\ker\left([\omega^{p+1}]:H_{dR}^{k-2p-1}(M) \to H_{dR}^{k+1}(M) \right) 
\end{align}
with $k=0, 1, \ldots, 2n+2p+1\,$.
The relevant cone complex for the general $p$ case would have the elements and differential 
\begin{align}\label{Cpcmplx}
    \tC^k(\om^{p+1}) = \Om^k(M) \oplus \theta \Om^{k-2p-1}(M)\,,\qquad d_C = \begin{pmatrix} d & \om^{p+1}\\ 0 & -d\end{pmatrix},
\end{align}
where $\theta$ is now a formal $(2p+1)$-form such that $d\theta = \om^{p+1}$.
And the corresponding cone Morse cochain complex would be
\begin{align}\label{cCpcmplx}
\tC^k(c(\om^{p+1}))=C^k(M,f)\oplus C^{k-2p-1}(M,f)\,,\qquad \del_C= \begin{pmatrix} \del & c(\om^{p+1}) \\ 0 & -\del \end{pmatrix}\,,
\end{align}
with $c(\om^{p+1}): C^k(M,f)\to C^{k+2p+2}(M,f)$ given by
\begin{align}
c(\om^{p+1})\,q_{k}=\sum_{r_{k+2p+2}}\left( \int_{\overline{\CM(r_{k+2p+2}, q_k)}}\, \om^{p+1} \right)r_{k+2p+2}\,,
\end{align}
which integrates $\om^{p+1}$ over the $2(p+1)$-dimensional submanifold $\overline{\CM(r_{k+2p+2}, q_k)}$ of gradient flow lines from the index $k+2p+2$ critical point, $r_{k+2p+2}\,$, to $q_k$.

We can take the inner product on $\tC^k(\om^{p+1})$,  just as in \eqref{cinprod}, to be
\begin{align}
\langle\eta_k + \theta \xi_{k-2p-1}, \eta'_k + \theta \xi'_{k-2p-1}\rangle_C \,= \, \langle\eta_k, \eta'_k\rangle + \langle\xi_{k-2p-1}, \xi'_{k-2p-1}\rangle\,. 
\end{align}
This inner product defines the adjoint operator $d^*_C$  
that goes into the cone Laplacian $\Delta_C = d_C d^*_C + d^*_C d_C\,$, which is a second-order elliptic operator on $\tC^k(\om^{p+1})$.  The Witten deformation method can be applied to this cone Laplacian $\Delta_C$ for $p>0$ following the steps described in Section 2 and 3.  The calculations are similar to the $p=0$ case.  At large $t$, there are again only two generators to the solutions of the deformed harmonic Laplacian localized at each critical point $p\in Crit(f)$.  For instance, for $k\leq n+p$, the generators take the form 
\begin{align*}
\begin{pmatrix} \zeta_k \\ 0 \end{pmatrix}, \qquad 
\begin{pmatrix} -\tau \w \om^p \w \zeta_{k-2p-1} \\ \zeta_{k-2p-1}\end{pmatrix},
\end{align*}
which generalizes the generators in \eqref{kln1} and \eqref{kln2}, respectively.  Generalizations of the estimates similar to those for the $p=0$ case can be carried out which results in the isomorphism of the cohomologies of the cone complex \eqref{Cpcmplx} with that of the cone Morse complex \eqref{cCpcmplx}.  From these results, we can likewise obtain cone Morse inequalities.  
Explicitly, using the notation  
\begin{align*}
s^p_k= \dim F^pH^k(M, \om)
=\dim H^k(\tC(\om^{p+1}))
\end{align*}
to denote the dimension of the TTY cohomology, we expect the following weak and strong cone Morse inequalities for all $p=0, 1, \dots, n-1$:
\begin{align}
s^p_k \, &\leq \,
m_k-v_{k-2p-2}+m_{k-2p-1}-v_{k-2p-1}\,,\\  
\sum_{i=0}^k (-1)^{k-i}s^p_i \, &\leq 
\left(\sum_{i=k-2p}^k  (-1)^{k-i}m_i\right) - \ v_{k-2p-1}\,,
\end{align}    
where $k= 0,1, \ldots, 2n+2p+1\,$ and 
\begin{align}
v_k=\rk\left(c(\om^{p+1}):C^k(M,f) \to C^{k+2p+2}(M,f)\right).
\end{align}

\begin{bibdiv}
\begin{biblist}[\normalsize]

\bib{AB}{article}{
   author={Austin, D. M.},
   author={Braam, P. J.},
   title={Morse-Bott theory and equivariant cohomology},
   conference={
      title={The Floer Memorial Volume},
   },
   book={
      series={Progr. Math.},
      volume={133},
      publisher={Birkh\"{a}user, Basel},
   },
   date={1995},
   pages={123--183},
}

\bib{BL}{article}{
   author={Bismut, J.-M.},
   author={Lebeau, G.},
   title={Complex immersions and Quillen metrics},
   journal={Inst. Hautes \'{E}tudes Sci. Publ. Math.},
   number={74},
   date={1991},
   pages={1--297},
}

\bib{BZ}{article}{
   author={Bismut, J.-M.},
   author={Zhang, W.},
   title={An extension of a theorem by Cheeger and M\"{u}ller},
   note={With an appendix by Fran\c{c}ois Laudenbach},
   journal={Ast\'{e}risque},
   number={205},
   date={1992},
   pages={235},
   issn={0303-1179},
}

\bib{Cho}{article}{
   author={Cho, Y.},
   title={Hard Lefschetz property of symplectic structures on compact K\"{a}hler
   manifolds},
   journal={Trans. Amer. Math. Soc.},
   volume={368},
   date={2016},
   number={11},
   pages={8223--8248},
   issn={0002-9947},
}


\bib{CTT3}{article}{
    author={Clausen, D.},
    author={Tseng, L.-S.},
    author={Tang, X.},
    title={Mapping cone and Morse theory},
    note = {arXiv:2405.02272 [math.DG]},
}

\bib{GTV}{article}{
   author={Gibson, M.},
   author={Tseng, L.-S.},
   author={Vidussi, S.},
   title={Symplectic structures with non-isomorphic primitive cohomology on open 4-manifolds},
   journal={Trans. Amer. Math. Soc.},
   volume={375},
   date={2022},
   number={12},
   pages={8399--8422},
}

\bib{GS}{article}{
   author={Guillemin, V.},
   author={Sternberg, S.},
   title={Geometric quantization and multiplicities of group
   representations},
   journal={Invent. Math.},
   volume={67},
   date={1982},
   number={3},
   pages={515--538},
}

\bib{MaZ}{article}{
   author={Ma, X.},
   author={Zhang, W.},
   title={Geometric quantization for proper moment maps: the Vergne
   conjecture},
   journal={Acta Math.},
   volume={212},
   date={2014},
   number={1},
   pages={11--57},
}

\bib{Machon}{article}{
  author={Machon, T.},
  title = {Some Morse-type inequalities for symplectic manifolds},
   note={\!arXiv:2109.13010v1 [math.SG]},
}

\bib{Smale}{article}{
   author={Smale, S.},
   title={On the structure of manifolds},
   journal={Amer. J. Math.},
   volume={84},
   date={1962},
   pages={387--399},
   issn={0002-9327},
}

\bib{Stratmann}{article}{
   author={Stratmann, B.},
   title={Nowhere vanishing primitive of a symplectic form},
   journal={Asian J. Math.},
   volume={26},
   date={2022},
   number={5},
   pages={705--708},
   issn={1093-6106},
}

\bib{TT}{article}{
   author={Tanaka, H. L.},
   author={Tseng, L.-S.},
   title={Odd sphere bundles, symplectic manifolds, and their intersection
   theory},
   journal={Camb. J. Math.},
   volume={6},
   date={2018},
   number={3},
   pages={213--266},
   issn={2168-0930},
}

\bib{TiZ}{article}{
   author={Tian, Y.},
   author={Zhang, W.},
   title={An analytic proof of the geometric quantization conjecture of
   Guillemin-Sternberg},
   journal={Invent. Math.},
   volume={132},
   date={1998},
   number={2},
   pages={229--259},
}

\bib{TTY}{article}{
   author={Tsai, C.-J.},
   author={Tseng, L.-S.},
   author={Yau, S.-T.},
   title={Cohomology and Hodge theory on symplectic manifolds: III},
   journal={J. Differential Geom.},
   volume={103},
   date={2016},
   number={1},
   pages={83--143},
   issn={0022-040X},
}

\bib{TW}{article}{
   author={Tseng, L.-S.},
   author={Wang, L.},
   title={Symplectic boundary conditions and cohomology},
   journal={J. Differential Geom.},
   volume={122},
   date={2022},
   number={2},
   pages={271--340},
}

\bib{TY1}{article}{
   author={Tseng, L.-S.},
   author={Yau, S.-T.},
   title={Cohomology and Hodge theory on symplectic manifolds: I},
   journal={J. Differential Geom.},
   volume={91},
   date={2012},
   number={3},
   pages={383--416},
}

\bib{TY2}{article}{
   author={Tseng, L.-S.},
   author={Yau, S.-T.},
   title={Cohomology and Hodge theory on symplectic manifolds: II},
   journal={J. Differential Geom.},
   volume={91},
   date={2012},
   number={3},
   pages={417--443},
}

\bib{Viterbo}{article}{
   author={Viterbo, C.},
   title={The cup-product on the Thom-Smale-Witten complex, and Floer cohomology},
   conference={title={The Floer memorial volume},},
   book={series={Progr. Math.},
        volume={133}, 
        publisher={Birkh\"{a}user, Basel},},
   date={1995},
   pages={609--625},
}

\bib{Witten}{article}{
   author={Witten, E.},
   title={Supersymmetry and Morse theory},
   journal={J. Differential Geom.},
   volume={17},
   date={1982},
   number={4},
   pages={661--692 (1983)},
}

\bib{Zhang}{book}{
   author={Zhang, W.},
   title={Lectures on Chern-Weil theory and Witten deformations},
   series={Nankai Tracts in Mathematics},
   volume={4},
   publisher={World Scientific Publishing Co., Inc., River Edge, NJ},
   date={2001},
   pages={xii+117},
}
\bib{HaoZhuang}{article}{
  author={Zhuang, H.},
  title = {Invariant Morse-Bott-Smale cohomology and the Witten deformation},
  note={arXiv:2311.10417v2 [math.DG]},
}

\end{biblist}
\end{bibdiv}

\vskip 1cm

\noindent
{Department of Mathematics, University of California, Riverside, CA 92507, USA}\\
{\it Email address:}~{\tt dclausen@ucr.edu}
\vskip .5 cm
\noindent
{Department of Mathematics, Washington University, St. Louis, MO 63130, USA}\\
{\it Email address:}~{\tt xtang@math.wustl.edu}
\vskip .5 cm
\noindent
{Department of Mathematics, University of California, Irvine, CA 92697, USA}\\
{\it Email address:}~{\tt lstseng@uci.edu}

\end{document}